\documentclass{amsart}
\usepackage{graphicx,dsfont}
\usepackage[dvipdfm, colorlinks, linkcolor=blue, anchorcolor=green, citecolor=red]{hyperref}
\usepackage{mathpazo}
\usepackage[all]{xy}
\textwidth=160truemm
\textheight=215truemm
\headsep=4truemm
\topmargin=0pt
\oddsidemargin=0pt
\evensidemargin=0pt
\parindent=16pt
% ----------------------------------------------------------------
\vfuzz2pt % Don't report over-full v-boxes if over-edge is small
\hfuzz2pt % Don't report over-full h-boxes if over-edge is small
% THEOREMS -------------------------------------------------------
\newtheorem{theorem}{Theorem}[section]
\newtheorem{corollary}[theorem]{Corollary}
\newtheorem{lemma}[theorem]{Lemma}
\newtheorem{proposition}[theorem]{Proposition}
\theoremstyle{definition}
\newtheorem{definition}[theorem]{Definition}
\newtheorem{remark}[theorem]{Remark}
\newtheorem{example}[theorem]{Example}
\numberwithin{equation}{section}
% MATH -----------------------------------------------------------

% ----------------------------------------------------------------
\begin{document}
\title[The chord index, its definitions, applications and generalizations]{The chord index, its definitions, applications and generalizations}%
\author{Zhiyun Cheng}%
\address{School of Mathematical Sciences, Laboratory of Mathematics and Complex Systems, Beijing Normal University, Beijing 100875, China}%
\email{czy@bnu.edu.cn}%
\subjclass{57M25, 57M27}%
\keywords{virtual knot, chord index, writhe polynomial, indexed Jones polynomial, indexed quandle, twisted knot}%
\begin{abstract}
In this paper we study the chord index of virtual knots, which can be thought of as an extension of the chord parity. We show how to use the chord index to define finite type invariants of virtual knots. The notions of indexed Jones polynomial and indexed quandle are introduced, which generalize the classical Jones polynomial and knot quandle respectively. Some applications of these new invariants are discussed. We also study how to define a generalized chord index via a fixed finite biquandle. Finally the chord index and its applications in twisted knot theory are discussed.
\end{abstract}
\maketitle
\section{Introduction}
This paper concerns with the chord index and it applications in virtual knot theory and twisted knot theory. Virtual knot theory, which was introduced by L. Kauffman in \cite{Kau1999}, studies the embeddings of $S^1$ in $\Sigma_g\times [0, 1]$ up to isotopy and stabilizations. Here $\Sigma_g$ denotes a closed orientable surface with genus $g$. When $g=0$, virtual knot theory reduces to the classical knot theory. It was first observed by Kauffman \cite{Kau2004} that each real crossing point of a virtual knot can be assigned with a parity, and a kind of self-linking number, the odd writhe, was proved to be a virtual knot invariant. Later this idea was extended by Mantutov in \cite{Man2010}. In \cite{Che2013} we introduced the notion of chord index, which assigns an integer to each real crossing point such that the parity of it exactly equals the parity that introduced by Kauffman. The main aim of this paper is to provide some applications of the chord index in virtual knot theory and its extension, the twisted knot theory.

First, we would like to discuss how to construct finite type invariants of virtual knots by using chord index. A well-known result in finite type invariant theory is, for classical knots there is no finite type invariant of degree one. However for virtual knots this is not the case. In \cite{Saw2003} Sawollek used a degree one finite type invariant to distinguish between a virtual knot and its inverse. Later in \cite{Hen2010} Henrich defined three degree one finite type invariants for virtual knots, and the strongest one is a ``universal" degree one finite type invariant for virtual knots (do not confuse this ``universal" invariant with the Kontsevich integral \cite{Kon1993}, we refer the reader to \cite{Hen2010} for the precise definition of this ``universal" finite type invariant). The first application of the chord index is investigating how to construct finite type invariants of virtual knots in general cases.

As the second application of the chord index, we introduce a sequence of Jones polynomials, say the indexed Jones polynomial. By ignoring all virtual crossing points, the classical Jones polynomial can be naturally defined for virtual knots with the help of Kauffman bracket. We remark that if $K$ is a classical knot, then Jones polynomial $V_K(t)$ takes value in $\mathds{Z}[t^{\pm1}]$. However for a virtual knot $K$, the Jones polynomial of $K$ takes value in $\mathds{Z}[t^{\pm\frac{1}{2}}]$. Therefore if $V_K(t)$ contains nonzero coefficient for some term $t^{\frac{n}{2}}$ $(n\neq0)$, then we conclude that $K$ is not classical. On the other hand when $K$ is a proper alternating virtual knot diagram, N. Kamada proved that the span of $V_K(t)=c(K)-g(K)$ \cite{Kam2004}, here $c(K)$ and $g(K)$ denote the crossing number and supporting genus of $K$ respectively. Later in \cite{Man2010} the classical Jones polynomial was generalized by Manturov to the parity skein relation polynomial invariant. Similar idea was used to define the parity arrow polynomial and its categorification \cite{Kae2012}. In this paper, by using the chord index, the set of real crossing points is divided into several subsets. For each subset we introduce an indexed Jones polynomial associated to it. Analogous to the classical case, we show that each indexed Jones polynomial provides a lower bound for the cardinality of the corresponding subset.

Thirdly, we introduce the notion of indexed quandle. Roughly speaking, an indexed quandle is a set with a sequence of binary operations (indexed by $\mathds{Z}$) which satisfies certain axioms. When all operations coincide the indexed quandle reduces to the classical quandle structure, which was first introduced in \cite{Joy1982, Mat1984}. Therefore the indexed quandle can be thought of as an extension of the classical quandle. For each virtual knot $K$ we define the indexed knot quandle of it, denoted by $IndQ(K)$. This invariant is equivalent to the fundamental quandle of $K$ when $K$ is a classical knot. But for virtual knots, we give some examples to show that it contains much more information than the fundamental quandle. In particular, with a given finite indexed quandle $Q$ one can define the coloring invariant $Col_Q(K)$ by counting the homomorphisms from $IndQ(K)$ to $Q$. As an analogue of the quandle cocycle invariants \cite{Car2003}, we define the indexed quandle cocycle invariants. Some examples are given to reveal that this cocycle invariant is more powerful than the coloring invariant.

In Section 6 the definition of the chord index is revisited. We want to understand what is a chord index essentially. As we understand it, the chord index can be regarded as a particular biquandle cocycle. In this way we discuss how to generalize the definition of the chord index for virtual links.

The last section is devoted to investigate the chord index and its applications in twisted knot theory.

\section{Virtual knot theory and chord index}
\subsection{A brief review of virtual knots}
Let $\Sigma_g$ be a closed orientable surface with genus $g$ and $K$ an embedded circle in $\Sigma_g\times[0, 1]$. Assume we have another embedded circle $K'\subset \Sigma_{g'}\times[0, 1]$, we say $K$ and $K'$ are \emph{stably equivalent} if one can be obtained from the other one by isotopy in the thickened surfaces, homeomorphisms of the surfaces and addition or subtraction of empty handles. We define the \emph{virtual knots} to be the stable equivalence classes of circles embedded in thickened surfaces. For a virtual knot $K$ the minimal genus of the surface $\Sigma_g$ is called the \emph{supporting genus} of $K$. By using some classical technique in 3-manifold topology, Kuperberg \cite{Kup2003} proved that the embedding of a virtual knot in the minimal supporting genus thickened surface is unique. It follows that if two classical knots are stably equivalent as virtual knots, then they are also equivalent as classical knots. This implies the virtual knot theory is indeed an extension of the classical knot theory.

From the diagrammatic viewpoint a virtual knot can be interpreted by virtual knot diagrams. A virtual knot diagram is an immersed circle in the plane with finitely many double points. By replacing each double point with an overcrossing, or an undercrossing, or a virtual crossing (denoted by a small cricle) we obtain a virtual knot diagram. Obviously if there exists no virtual crossing the virtual knot diagram represents a classical knot. We say a pair of virtual knot diagrams are \emph{equivalent} if they can be connected by a sequence of generalized Reidemeister moves, see Figure \ref{figure1}.
\begin{figure}[h]
\centering
\includegraphics{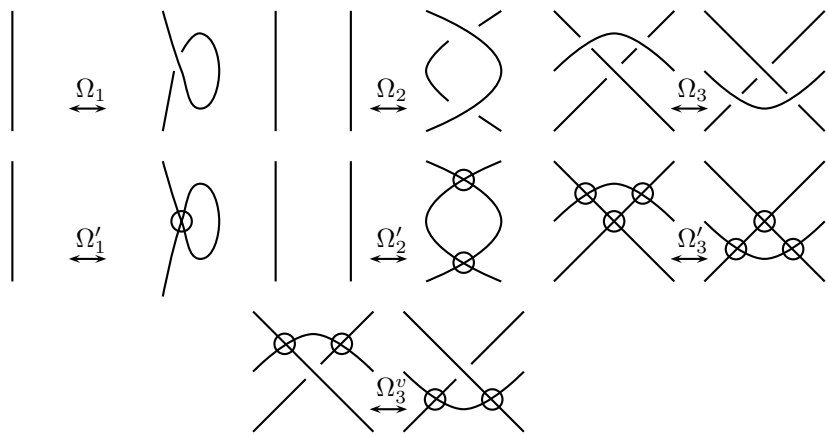}\\
\caption{Generalized Reidemeister moves}\label{figure1}
\end{figure}
Now we can define \emph{virtual knots} as the equivalence classes of virtual knot diagrams up to generalized Reidemeister moves.

The two definitions above are closely related. Assume we have a embedded circle in a thickened surface, now consider a projection of the circle to the plane in general position. If the preimage of a double point is an overcrossing (undercrossing) in the thickened surface, then we still use an overcrossing (undercrossing) to denote it. If the two strands of the preimage of a double point locate in two different levels, then we use a virtual crossing to denote it. In other words, the virtual crossings can be regarded as artifacts of the projection of the surface to the plane. The readers are referred to \cite{Kau1999} for more details. Conversely, suppose we have a virtual knot diagram $K$ on the plane. By taking one-point compactification we obtain a virtual knot diagram on $S^2$. For each virtual crossing we add a 2-handle locally to eliminate the crossing. Finally we obtain an embedded circle in $\Sigma_{c_v(K)}\times[0, 1]$, where $c_v(K)$ denotes the number of virtual crossings in $K$. The following theorem shows that the two definitions above are equivalent.
\begin{theorem}[\cite{Kau1999,Car2002}]
Two virtual knot diagrams are equivalent if and only if their corresponding surface embeddings are stably equivalent.
\end{theorem}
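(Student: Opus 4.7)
The plan is to prove the two implications separately, using the correspondence between virtual crossings and one-handles attached to the plane. The forward direction is the easier one: I would check that every local move in the generalized Reidemeister set lifts to an operation that preserves the stable equivalence class of the surface embedding. For the three classical moves (R1, R2, R3) the local picture lives in a disk not meeting any handle, so each move is realized by an ambient isotopy in $\Sigma_g\times[0,1]$. For the purely virtual moves (V1, V2, V3), no real strand is involved, and the relevant local change to the embedding can be performed by either an isotopy over an empty portion of a handle or by canceling a pair of handles; at worst this requires stabilization. The mixed move is the critical one, and I would verify it by showing that a real strand sliding across a virtual crossing corresponds to pushing the strand over the top of a one-handle, which is an isotopy in the thickened surface once the extra handle is present.

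For the converse direction, the idea is to take two stably equivalent embeddings and connect them by a finite sequence whose moves are of three types: (i) ambient isotopy in a fixed $\Sigma_g\times[0,1]$, (ii) surface homeomorphism, and (iii) empty handle addition/subtraction. I would then analyze each type diagrammatically. For (i), I would put the isotopy into general position so that, projected to the plane along with a fixed projection of $\Sigma_g$, the resulting one-parameter family of diagrams is generic except at finitely many times; these singular moments either occur away from handle regions (and thus produce one of R1, R2, R3) or occur within the handle regions (where the strand in question consists of virtual crossings, and the change is an instance of the detour move, which is known to follow from V1, V2, V3 together with the mixed move). For (ii), I would reduce surface self-homeomorphisms to compositions of handle slides and Dehn twists, and argue each acts on the diagram by a sequence of detours and isotopies, hence by generalized Reidemeister moves. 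For (iii), adding an empty handle in a region disjoint from the knot changes the diagram by inserting a pair of virtual crossings that can be removed by V2, and removing such a handle is the reverse.

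The main obstacle is the careful bookkeeping in step (i): controlling what happens at a singular time of the isotopy when a strand crosses a handle attaching region, so that one can cleanly identify the diagrammatic event with an instance of the detour principle rather than something exotic. Closely related is showing that the detour move is truly generated by V1, V2, V3 and the mixed move, so that any rerouting of a purely virtual arc can be accomplished diagrammatically. Once the detour move is available as a derived move, the forward and backward analyses are symmetric and everything else reduces to a finite case check of the local pictures, following the strategy laid out in \cite{Kau1999,Car2002}.
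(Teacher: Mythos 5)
The paper does not actually prove this statement: it is quoted directly from \cite{Kau1999,Car2002}, so there is no internal argument to compare yours against. Judged on its own, your forward direction is fine and standard: each generalized Reidemeister move is supported in a disk or, for the purely virtual and mixed moves, is realized by isotopy over a handle together with addition or removal of empty handles, so it preserves the stable equivalence class.

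The converse, however, has a genuine gap as you have set it up. You speak of putting an isotopy in $\Sigma_g\times[0,1]$ into general position ``projected to the plane along with a fixed projection of $\Sigma_g$,'' but a closed surface of positive genus admits no projection to the plane; the virtual-diagram picture only arises after choosing a handle decomposition of $\Sigma_g$, projecting the part of the knot lying over the disk part, and recording the strands running over one-handles by virtual crossings. An ambient isotopy of the pair, a homeomorphism of the surface, or a (de)stabilization has no reason to respect that chosen decomposition, so your case analysis at ``singular times'' (handle region versus disk region) is not well defined as stated, and the reduction of surface homeomorphisms to handle slides and Dehn twists acting ``by detours'' needs the prior step of isotoping the homeomorphism so that it carries a neighborhood of the diagram to a neighborhood of the new diagram. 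The published proofs you cite avoid exactly this difficulty by interposing abstract link diagrams: one passes to a regular neighborhood of the projected diagram in the surface, shows that stable equivalence of embeddings corresponds to Reidemeister moves on abstract link diagrams (where the ambient surface is discarded and only the neighborhood is kept), and then shows abstract link diagrams modulo those moves are the same as virtual diagrams modulo generalized Reidemeister moves, the virtual crossings appearing when the neighborhood is immersed in the plane. Your outline would either have to be reworked along those lines or be supplemented by a genuine general-position argument relative to a chosen handle decomposition, which is precisely the bookkeeping you flag as the main obstacle but do not resolve; the derivability of the detour move from the virtual and mixed moves, by contrast, is standard and not the real issue.
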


Another way to understand virtual knots is to regard them as Gauss diagrams. Let $K$ be a virtual knot diagram, which can be seen as an immersed circle in the plane. Consider the preimage of this immersed circle with an anticlockwise orientation. For each real crossing point, we draw a chord directed from the preimage of the overcrossing to the preimage of the undercrossing. Finally we assign a sign to each chord according to the sign(writhe) of the corresponding crossing point. We call this chord diagram the \emph{Gauss diagram} of $K$ and use $G(K)$ to denote it, see Figure \ref{figure2} for a simple example. We note that all virtual crossing points are ignored on the Gauss diagram.
\begin{figure}[h]
\centering
\includegraphics{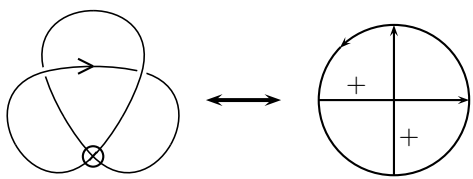}\\
\caption{Virtual trefoil knot and its Gauss diagram}\label{figure2}
\end{figure}

It is well known that each classical knot diagram has a corresponding Gauss diagram, but the converse is not true in general. If we use virtual knot diagrams instead of the classical knot diagrams, then for each Gauss diagram we can always find a corresponding virtual knot diagram that represents it. Although there may exist infinitely many different virtual knot diagrams which correspond to the same Gauss diagram, we have the following correspondence between them.
\begin{theorem}[\cite{Gou2000}]
A Gauss diagram uniquely defines a virtual knot isotopy class.
\end{theorem}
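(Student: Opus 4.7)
The plan is to prove the theorem by showing that any two virtual knot diagrams $D_1$ and $D_2$ sharing the same Gauss diagram $G$ are related by a sequence of the generalized Reidemeister moves appearing in Figure \ref{figure1}. The key tool is the \emph{detour move}: given any arc in a virtual knot diagram whose interior contains only virtual crossings, that arc may be replaced by any other generic arc with the same endpoints, provided the new arc meets the rest of the diagram only in virtual crossings. I would first verify that the detour move is a consequence of the three purely virtual Reidemeister moves together with the mixed (semi-virtual) move: an isotopy of a virtual arc in the plane that passes over real or virtual crossings decomposes into finitely many local moves, each of which is exactly one of VR1, VR2, VR3, or the mixed move.

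With the detour move in hand, the main argument is as follows. Let $G$ be the common Gauss diagram of $D_1$ and $D_2$. The chords of $G$ are in canonical bijection with the real crossings of each $D_i$, and the arcs of the underlying circle of $G$ between consecutive chord endpoints correspond to arcs of $D_i$ running from one real crossing to the next and containing only virtual crossings in their interiors. Thus there is a bijection $\phi$ between the real crossings of $D_1$ and $D_2$ preserving the sign, the over/under information, and the cyclic order of the four endpoints as encoded in $G$. I would then perform a planar isotopy of $D_2$ to place each real crossing of $D_2$ at the location of its $\phi$-image in $D_1$, matching the local orientation of the two strands at each crossing. This is possible because the data recorded at a real crossing (the two tangent directions and the over/under strand) can be adjusted by a small planar isotopy near the crossing.

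After this normalization, $D_1$ and $D_2$ agree on a small disk around each real crossing, and they differ only in how the connecting arcs (each containing only virtual crossings) are routed through the plane. Applying the detour move to each such arc in turn replaces the routing in $D_2$ by the routing in $D_1$, and after finitely many detour moves we obtain $D_1$ exactly. Since each detour move is a finite composition of generalized Reidemeister moves, $D_1$ and $D_2$ represent the same virtual knot.

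The main obstacle is the detour move itself: one must argue carefully that any isotopy of a virtual arc in the plane, keeping its endpoints and the remainder of the diagram fixed, can be realized by the moves VR1, VR2, VR3, and the mixed move. The standard way is to put the isotopy in general position with respect to the other strands and the real crossings, so that only finitely many catastrophes occur, and to check that each catastrophe corresponds to one of these four local moves. Once this is established, the rest of the argument is a bookkeeping reduction to the Gauss-diagram combinatorics.
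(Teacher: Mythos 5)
The paper does not prove this statement at all; it is quoted directly from \cite{Gou2000}, so there is no internal proof to compare against. Your detour-move argument is the standard proof of this fact (it is essentially the argument of \cite{Kau1999} and \cite{Gou2000}) and it is correct: the ambient planar isotopy matching the real-crossing disks creates no new intersections, the connecting arcs then contain only virtual crossings, and re-routing them one at a time by detour moves (with all newly created intersections declared virtual) reduces everything to the purely virtual moves plus the mixed move of Figure \ref{figure1}. The only part that genuinely requires care is the general-position verification that the detour move follows from those four local moves, which you correctly identify and sketch.
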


Since the time when virtual knot theory was introduced, many virtual knot invariants have been introduced. Several classical knot invariants can be directly extended to the virtual world. For example, the knot group, the knot quandle and the Jones polynomial can be similarly defined for virtual knots \cite{Kau1999}. Some generalizations of the Alexander polynomial for virtual knots can be found in \cite{Saw2001} and \cite{Sil2003}, and some generalizations of the Jones polynomial can be found in \cite{Miy2006,Miy2008} and \cite{Dye2009}. Readers should refer to \cite{Fen2014} for some recent progress and open problems in virtual knot theory.

\subsection{Chord index}
In the present paper we will focus on the virtual knot invariants induced from the chord index. Roughly speaking, a chord index is an integer assigned to each chord in a Gauss diagram, or equivalently to each real crossing point of the virtual knot diagram. We are going to give two definitions of the chord index, one comes from Gauss diagrams and the other one comes from knot diagrams.

Let $K$ be a virtual knot diagram and $G(K)$ the corresponding Gauss diagram. According to the one to one correspondence between the real crossing points in $K$ and chords in $G(K)$, we will use the same notation to denote a real crossing in $K$ and its corresponding chord in $G(K)$. Choose a chord $c$ in $G(K)$, we associate four integers to $c$ as follows:
\begin{enumerate}
  \item $r_+(c)=$ the number of positive chords crossing $c$ from left to right;
  \item $r_-(c)=$ the number of negative chords crossing $c$ from left to right;
  \item $l_+(c)=$ the number of positive chords crossing $c$ from right to left;
  \item $l_-(c)=$ the number of negative chords crossing $c$ from right to left.
\end{enumerate}
\begin{figure}[h]
\centering
\includegraphics[width=3cm]{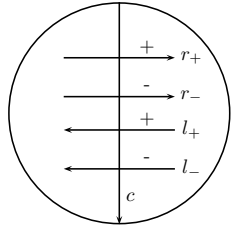}\\
\caption{The definition of the chord index}\label{figure3}
\end{figure}
Now we define the \emph{index} of $c$ as
\begin{center}
Ind$(c)=r_+(c)-r_-(c)-l_+(c)+l_-(c)$.
\end{center}
Roughly speaking, the index of a chord $c$ counts the signed sum of the chords which have nonempty intersections with $c$. In other words, each chord that has nonempty intersection with $c$ contributes $\pm1$ to Ind$(c)$.

Now we follow \cite{Fol2013} to give another definition of the chord index from the viewpoint of knot diagrams. Before proceeding to give the definition, we need to take a quick review of the linking number in virtual knot theory. Let $L=K_1\cup K_2$ be a 2-component virtual link diagram. We use Over$(C)$ (Under$(C)$) to denote the set of crossings between $K_1$ and $K_2$ that we encounter as overcrossings (undercrossings) when we travel along $K_1$. Now we define the \emph{over linking number} $lk_O(L)=\sum\limits_{c\in\text{Over}(C)}w(c)$ and the \emph{under linking number} $lk_U(L)=\sum\limits_{c\in\text{Under}(C)}w(c)$, where $w(c)$ is the writhe of $c$. Note that if $L$ is classical, we always have $lk_O(L)=lk_U(L)$. But when $L$ has some virtual crossings, this is not true in general.

We turn to the definition of chord index using over linking number and under linking number. Let $K$ be a virtual knot diagram and $c$ a real crossing point of it. By smoothing $c$ along the orientation of $K$ we obtain a 2-component link $L=K_1\cup K_2$, where the order of $K_1$ and $K_2$ is indicated in Figure \ref{figure4}. The \emph{index} of the crossing point $c$ can be defined as below
\begin{center}
Ind$(c)=lk_O(L)-lk_U(L)$.
\end{center}
\begin{figure}[h]
\centering
\includegraphics{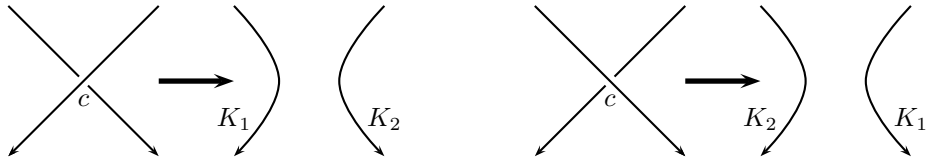}\\
\caption{Smooth the crossing point $c$}\label{figure4}
\end{figure}

We end this section with some useful properties of the chord index. The details of the proof can be found in, for example \cite{Che2013,Fol2013} and \cite{Kau2013}.
\begin{proposition}
Let $K$ be a virtual knot diagram and $c$ a real crossing point of it, then we have the following results:
\begin{enumerate}
  \item The two definitions of the chord index mentioned above are equivalent.
  \item If $c$ is isolated, i.e. no other chord has nonempty intersection with $c$, then Ind$(c)=0$.
  \item The two crossings involved in $\Omega_2$ have the same index.
  \item The indices of the three crossings involved in $\Omega_3$ are invariant under $\Omega_3$.
  \item $\Omega_i$ $(i=1, 2, 3)$ preserves the indices of chords that do not appear in $\Omega_i$ $(i=1, 2, 3)$.
  \item If $K$ contains no virtual crossings, then every crossing of $K$ has index zero.
  \item Ind$(c)$ is invariant under switching some other real crossings.
\end{enumerate}
\end{proposition}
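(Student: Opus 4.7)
The plan is to verify the seven assertions largely by direct computation from the Gauss-diagram definition, with part (1) playing the role of a bridge to the linking-number definition. Items (2), (5), (6), (7) will be essentially bookkeeping once (1) is in place, while items (3) and (4) reduce to a local check on the relevant Reidemeister move.

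For (1), I would smooth the crossing $c$ along the orientation to produce $L = K_1 \cup K_2$. The chords of $G(K)$ whose two endpoints lie on the same component neither cross $c$ nor contribute to $lk_O(L)$ or $lk_U(L)$; the remaining chords correspond bijectively to the real crossings between $K_1$ and $K_2$, and inspecting the smoothing picture near $c$ matches the four types counted by $r_\pm(c), l_\pm(c)$ with overcrossings/undercrossings along $K_1$, with the chord sign equal to the writhe. This identifies $r_+ - r_-$ with $lk_O(L)$ and $-l_+ + l_-$ with $-lk_U(L)$, yielding Ind$(c) = lk_O(L) - lk_U(L)$.

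Item (2) is immediate. For (3), the key observation is that $\Omega_2$ inserts two chords $c_1, c_2$ of opposite sign whose four endpoints occupy two adjacent pairs on the Gauss circle, with both chords pointing from the first pair to the second under the obvious pairing. Consequently every other chord $d$ either crosses both $c_1, c_2$ or crosses neither, and in the former case it crosses them with the same ``left-to-right'' character, so $d$ contributes equally to Ind$(c_1)$ and Ind$(c_2)$; a short direct check shows that the mutual contribution of $c_1, c_2$ to each other's indices (nonzero only in the linked sub-case) is likewise balanced, giving Ind$(c_1) = $ Ind$(c_2)$. The same observation yields (5): the two $\Omega_2$ chords have opposite signs but the identical left-to-right pattern against any external chord, so their contributions to each external index cancel; the $\Omega_1$ chord is isolated by construction; and $\Omega_3$ only permutes endpoints inside its local region, leaving intersections with external chords untouched. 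For (4) one checks directly that the pairwise intersections of the three chords involved in $\Omega_3$ are preserved, from which each of the three indices is easily seen to be invariant.

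For (6), if $K$ has no virtual crossings then $L = K_1 \cup K_2$ is a classical link and the usual linking number is well-defined, so $lk_O(L) = lk_U(L)$ and Ind$(c) = 0$ by (1). For (7), switching a real crossing $c' \ne c$ simultaneously reverses the orientation of the chord $c'$ (over and under endpoints are swapped) and flips its sign (the writhe changes sign); in the formula for Ind$(c)$ the reversal turns an $r$-contribution of $c'$ into the corresponding $l$-contribution of opposite polarity, which the sign flip exactly compensates, so $c'$'s contribution to Ind$(c)$ is unchanged. The main obstacle is part (1): the rest of the proposition is essentially combinatorial once the two definitions are known to agree, but (1) itself requires some care in matching the orientation and over/under data of the smoothed link with the left/right and sign data of the Gauss diagram.
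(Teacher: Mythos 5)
Most of what you propose is fine, and in fact the paper itself gives no proof of Proposition 2.3 (it defers to \cite{Che2013,Fol2013,Kau2013}); your treatment of items (1), (2), (3), (5), (6), (7) follows the standard lines of those references, with (3) correctly handling the mutual contribution of the two $\Omega_2$ chords via the sign/direction compensation. The genuine gap is in item (4): your key claim that ``the pairwise intersections of the three chords involved in $\Omega_3$ are preserved'' is false, so the deduction you build on it does not go through. Concretely, take the braid-type $\Omega_3$ (all three strands parallel, $\sigma_1\sigma_2\sigma_1\leftrightarrow\sigma_2\sigma_1\sigma_2$) with crossings $ab$, $ac$, $bc$, and suppose the knot traverses the three local strands in the order $a$, then $b$, then $c$. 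Before the move the six endpoints occur on the Gauss circle in the cyclic order $ab,ac\mid ab,bc\mid ac,bc$, so the chord $ab$ meets $ac$, $ac$ meets $bc$, and $ab$ misses $bc$; after the move the order is $ac,ab\mid bc,ab\mid bc,ac$, and now the only intersecting pair is $ab$ with $bc$. The three indices are nevertheless invariant (with all crossings positive they are $-1,0,+1$ on both sides), but only because the mutual contributions are redistributed --- for instance the $-1$ that $\mathrm{Ind}(ab)$ receives from $ac$ before the move is received from $bc$ after it --- and this compensation depends on the chord directions and signs, not on a preserved intersection pattern.

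To repair (4) you must either carry out the case analysis of the mutual contributions of the three chords for the oriented versions of $\Omega_3$ (keeping track of which strand is over, as in \cite{Che2013,Kau2013}), or, more efficiently, derive (4) from the linking-number formulation you establish in (1): smoothing corresponding crossings on the two sides of $\Omega_3$ yields equivalent $2$-component virtual links, and $lk_O$, $lk_U$ are invariants of such links, which is essentially the route taken in \cite{Fol2013}. A related small caveat: your argument for (5) in the $\Omega_3$ case is fine as stated (each of the three chords keeps its sign and its endpoints stay inside local arcs containing no external endpoints), but note that this is a different and easier statement than (4), so the correctness of (5) does not rescue your argument for (4).
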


\section{finite type invariants of virtual knots}
\subsection{Finite type invariants}
A finite type invariant (Vassiliev invariant) is a knot invariant which takes values in an abelian group, and it vanishes on all singular knots with $n$ singularities if $n$ is greater than some fixed integer. Finite type invariant was first introduced by Vassiliev in \cite{Vas1990} and later reformulated by Birman and Lin in a combinatorial way \cite{Bir1993}. As Kauffman did in \cite{Kau1999}, the definition of the finite type invariant can be directly extended to virtual knots. When there is no virtual crossing point, this definition coincides with the combinatorial definition given in \cite{Bir1993}.

Before defining the finite type invariants we need to take a quick review of the singular virtual knot theory. By a singular virtual link diagram, we mean a 4-valent planar graph with some vertices replaced by real crossings and some vertices replaced by virtual crossings. For the remaining crossings, we call them \emph{singular crossings}. Two singular virtual link diagrams are \emph{equivalent} if and only if one can be obtained from the other one by a sequence of generalized Reidemeister moves (Figure \ref{figure1}) and singular Reidemeister moves (Figure \ref{figure5}).
\begin{figure}[h]
\centering
\includegraphics{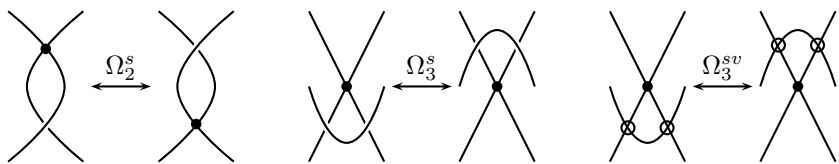}\\
\caption{Singular Reidemeister moves}\label{figure5}
\end{figure}

Let $f$ be a virtual knot invariant which take values in an abelian group. We extend $f$ to an invariant of singular virtual knots with $n$ singularities via the following recursive relation
\begin{center}
$f^{(n)}(K)=f^{(n-1)}(K_+)-f^{(n-1)}(K_-)$,
\end{center}
here $K_+$ is obtained from $K$ by resolving a singular crossing point into a positive crossing and $K_-$ is obtained from $K$ by resolving the same singular crossing point into a negative crossing. For the initial condition we set $f^{(0)}=f$. We say a virtual knot invariant $f$ is a \emph{finite type invariant of degree $n$} if $f^{(n+1)}$ vanishes on all singular virtual knots with $n+1$ singularities, but there exists a singular virtual knot $K$ with $n$ singularities such that $f^{(n)}(K)\neq0$. In other words, if $f$ is a finite type invariant of degree $n$, then for any singular virtual knot diagram $K$ with $n+1$ singular crossings we have
\begin{center}
$\sum\limits_{\sigma\in\{0, 1\}^{n+1}}(-1)^{|\sigma|}f(K_{\sigma})=0$.
\end{center}
Here $\sigma$ runs over all $(n+1)$-tuples of zeros and ones, $|\sigma|$ denotes the number of ones in $\sigma$ and $K_{\sigma}$ is obtained from $K$ by replacing the $i$-th singular crossing with a positive (negative) crossing if the $i$-th position of $\sigma$ is zero (one).

For example, for classical knots, a finite type invariant of degree 0 means it is preserved under crossing change, hence it takes the same value for all knots. It is easy to observe that there is no finite type invariant of degree 1 for classical knots. Later we will find that this is not the case for virtual knots.

\subsection{Writhe polynomial}
As before we use $K$ to denote a virtual knot diagram and $c$ to denote a real crossing of $K$. Proposition 2.3 enlightens us to consider the following integers
\begin{center}
$a_n(K)=
\begin{cases}
\sum\limits_{\text{Ind}(c)=n}w(c)& \text{if }n\neq0;\\
\sum\limits_{\text{Ind}(c)=n}w(c)-w(K)& \text{if }n=0,
\end{cases}$
\end{center}
where $w(c)$ and $w(K)$ denote the writhe of $c$ and $K$ respectively. We note that $a_0$ in fact can be determined by other $a_n$ $(n\neq 0)$. More precisely,
\begin{center}
$a_0(K)=\sum\limits_{\text{Ind}(c)=0}w(c)-w(K)=\sum\limits_{\text{Ind}(c)=0}w(c)-\sum\limits_{c}w(c)=-\sum\limits_{\text{Ind}(c)\neq 0}w(c)=-\sum\limits_{n\neq 0}a_n(K)$.
\end{center}
The following theorem can be easily derived from Proposition 2.3.
\begin{theorem}[\cite{Che2013,Dye2013,Im2013,Kau2013,Shi2014}]
For each $n\in \mathds{Z}$, $a_n(K)$ is a virtual knot invariant.
\end{theorem}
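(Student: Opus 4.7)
The plan is to verify invariance of $a_n(K)$ under each generalized Reidemeister move, using Proposition 2.3 as a black box. The virtual moves (purely virtual $\Omega_1$, $\Omega_2$, $\Omega_3$ and the semi-virtual $\Omega_3$) neither create nor destroy real crossings, and by Proposition 2.3(5) they do not alter the index of any existing real crossing (note: for the semi-virtual move one should also check that $w(c)$ of the unique real crossing involved is unchanged, which is clear because it is the same crossing before and after the detour). They also preserve $w(K)$ since no real writhe is added or subtracted, so $a_n$ is unaffected. This reduces the problem to the three classical moves.

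For $\Omega_1$, a single real crossing $c$ is created or destroyed. Since $c$ is isolated on the Gauss diagram, Proposition 2.3(2) gives $\textrm{Ind}(c)=0$. For $n\neq 0$ the sum $\sum_{\textrm{Ind}(c')=n} w(c')$ is thus unchanged, and by Proposition 2.3(5) every other crossing keeps its index, so $a_n$ is invariant. For $n=0$, the crossing $c$ contributes $w(c)$ both to $\sum_{\textrm{Ind}(c')=0} w(c')$ and to the total writhe $w(K)$; these contributions cancel in $a_0(K)=\sum_{\textrm{Ind}(c')=0} w(c')-w(K)$, proving invariance.

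For $\Omega_2$, two real crossings $c_1, c_2$ are created or destroyed. By Proposition 2.3(3) they share the same index $m$, and by construction they have opposite writhes, so $w(c_1)+w(c_2)=0$. For $n\neq 0$, if $m=n$ their contributions to $a_n$ cancel, and if $m\neq n$ they do not contribute at all; either way $a_n$ is invariant by Proposition 2.3(5). For $n=0$ the same cancellation occurs, and $w(K)$ is likewise unchanged since $w(c_1)+w(c_2)=0$. For $\Omega_3$, no real crossing is created or destroyed, the writhes of the three participating crossings are preserved, and by Proposition 2.3(4)--(5) every index in the diagram is preserved, so $a_n$ is invariant for all $n$.

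There is no real obstacle here; the only subtle point is the role of the correction term $-w(K)$ in the definition of $a_0$, which is precisely what is needed to absorb the index-$0$ crossing introduced by $\Omega_1$. Once this is observed, the argument is a direct book-keeping exercise using Proposition 2.3.
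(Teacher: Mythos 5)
Your proof is correct and follows exactly the route the paper intends: the paper simply states that Theorem 3.1 "can be easily derived from Proposition 2.3," and your case-by-case check of the generalized Reidemeister moves (with the $-w(K)$ correction absorbing the index-zero crossing from $\Omega_1$, and the opposite-writhe cancellation for $\Omega_2$) is precisely that derivation written out in full.
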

For convenience, we rewrite these invariants $\{a_n(K)\}$ in the form of a polynomial. We define the \emph{writhe polynomial}, which was introduced in \cite{Che2013}, as
\begin{center}
$W_K(t)=\sum\limits_{n\neq0}a_n(K)t^n$.
\end{center}
Obviously $W_K(t)$ is also a virtual knot invariant. If we want to include the contributions from the real crossings with index zero, following \cite{Kau2013} we define the \emph{affine index polynomial} $P_K(t)$ to be
\begin{center}
$\sum\limits_na_n(K)t^n=W_K(t)+a_0(K)=W_K(t)-W_K(1)$.
\end{center}

The writhe polynomial $W_K(t)$ has many applications in virtual knot theory. First, $W_K(t)=0$ if $K$ is classical, since all crossing points have index zero in this case. Hence whenever $W_K(t)\neq0$, then $K$ must be non-classical. On the other hand, the writhe polynomial is quite sensitive to some symmetries of virtual knots. For example, consider the virtual trefoil knot in Figure \ref{figure2}. Direct calculation shows that the writhe polynomial of it is $t+t^{-1}$, however the mirror image of it has writhe polynomial $-t-t^{-1}$. Some examples in \cite{Che2013} show that writhe polynomial also can be used to distinguish some virtual knots from their inverses. According to the definition of the chord index, it is evident that $|a_n(K)|$ $(n\neq0)$ gives a lower bound for the number of crossings with index $n$. However, if a crossing point has index zero, then it has no contribution to the writhe polynomial. Therefore in general we can not obtain any information about the number of crossing points with index zero from the writhe polynomial. For example, the index of any crossing in a classical knot diagram is zero but the writhe polynomial is also zero. One approach to overcome this problem was given in \cite{Che2016} recently. In Section 4 we will give an alternative solution to this problem.

\subsection{Finite type invariants of degree 0}
In order to discuss finite type invariants of degree 0, it is convenient to consider the flat virtual knot theory. A \emph{flat virtual knot diagram} is an immersed $S^1$ in the plane, where each crossing is either a virtual crossing point or a flat crossing point. Two flat virtual knot diagrams are \emph{equivalent} if they are related by finitely many flat Reidemeister moves indicated in Figure \ref{figure1}, where all real crossings should be replaced by flat crossings. Roughly speaking, flat virtual knots are equivalence classes of virtual knots up to crossing changes. Therefore, if a flat virtual knot diagram has no virtual crossing point, then it must be trivial. Given a virtual knot diagram $K$, one can define the corresponding flat virtual knot $F(K)$ by replace all real crossings in $K$ with flat crossings. Notice that if $K$ and $K'$ can be connected by a sequence of generalized Reidemeister moves, then $F(K)$ and $F(K')$ can be connected by a sequence of corresponding flat Reidemeister moves. This makes a guarantee that the definition of $F(K)$ is well defined.
\begin{remark}
We remark that each flat virtual knot diagram obviously has a corresponding Gauss diagram, where each chord in the Gauss diagram has no direction or sign. However for a given Gauss diagram, in general it corresponds to infinitely many different flat virtual knot diagrams. All these flat virtual knots are equivalent if we add one more move in the equivalence relations. This leads to the \emph{free knot theory} introduced by Manturov. The readers are referred to \cite{Man2010} and references therein for more details.
\end{remark}

Recall that a finite type invariant of degree 0 is a nonzero virtual knot invariant which vanishes on all singular virtual knots with one singularity. According to the recursive relation, it follows that a nonzero virtual knot invariant is a finite type invariant of degree 0 if and only if it is invariant with respect to crossing changes. Therefore if we have a nonzero flat virtual knot invariant $g$ (for example, the counting invariant derived from semiquandle introduced in \cite{HN2010}), then by defining $f(K)=g(F(K))$ we obtain a finite type invariant of degree 0. Conversely, each finite type invariant of degree 0 also provides us a flat virtual knot invariant.

Now we define a flat virtual knot invariant (hence a finite type invariant of degree 0) based on the writhe polynomial $W_K(t)$. Let $K$ be a virtual knot diagram and $F(K)$ the corresponding flat virtual knot. We define a polynomial of $F(K)$ as below
\begin{center}
$\mathfrak{F}_{F(K)}(t)=W_K(t)-W_K(t^{-1})$.
\end{center}
\begin{proposition}
$\mathfrak{F}_{F(K)}(t)$ is a well defined flat virtual knot invariant.
\end{proposition}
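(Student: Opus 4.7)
The plan is to reduce the assertion to two observations: (a) $W_K(t)$ is already a virtual knot invariant by Theorem 3.1, so $\mathfrak{F}_{F(K)}(t) = W_K(t) - W_K(t^{-1})$ is automatically preserved under generalized Reidemeister moves; and (b) two virtual knot diagrams yield the same flat virtual knot precisely when they are related by a sequence of generalized Reidemeister moves and crossing changes, so it is enough to check invariance under a single crossing change.

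Fix a real crossing $c$ of $K$ and let $K'$ be the diagram obtained by switching $c$. Write $\varepsilon = w(c)$ and $n_0 = \mathrm{Ind}(c)$. The first task is to record the effect of the switch on the Gauss diagram $G(K)$. Because the chord of $c$ is directed from the over-strand to the under-strand and is signed by the writhe, switching $c$ simultaneously reverses the direction of this chord and flips its sign, while every other chord is untouched. Applying the formula $\mathrm{Ind}(c) = r_+(c) - r_-(c) - l_+(c) + l_-(c)$, reversing the direction of $c$ swaps the roles of ``left'' and ``right'', and since the signs of the \emph{other} chords are unchanged one reads off that the new index of $c$ equals $-n_0$. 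For every other crossing, Proposition~2.3(7) guarantees that its index is unaffected by the switch, and its writhe is obviously unaffected.

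From this one reads off the change in the coefficients $a_n$ defining $W_K(t)$. If $n_0 \neq 0$, only two coefficients move:
\[
a_{n_0}(K') = a_{n_0}(K) - \varepsilon, \qquad a_{-n_0}(K') = a_{-n_0}(K) - \varepsilon,
\]
so that
\[
W_{K'}(t) - W_K(t) = -\varepsilon\bigl(t^{n_0} + t^{-n_0}\bigr),
\]
which is palindromic under $t \leftrightarrow t^{-1}$ and therefore cancels in $\mathfrak{F}$. If $n_0 = 0$, the switched crossing contributes to neither $W_K(t)$ nor $W_{K'}(t)$ (since $W_K$ omits the index-$0$ term), and no other coefficient changes, so $W_K(t)$ is left completely untouched. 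In either case $\mathfrak{F}_{F(K')}(t) = \mathfrak{F}_{F(K)}(t)$, which combined with (a) gives well-definedness as a flat virtual knot invariant.

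The main obstacle is really the bookkeeping at the Gauss-diagram level: one must be careful that a crossing change translates simultaneously to a chord-direction reversal and a chord-sign flip for the single chord involved, and then extract from the combinatorial definition of $\mathrm{Ind}$ that this operation negates the index. Once this is pinned down, the antisymmetrization $W_K(t) - W_K(t^{-1})$ is visibly the right combination to absorb the resulting palindromic change.
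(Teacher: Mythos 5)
Your proposal is correct and follows essentially the same route as the paper: reduce to a single crossing change, note that switching a crossing negates its index and flips its writhe while leaving all other chords' data unchanged, and observe that the resulting change in $W_K(t)$ is palindromic under $t\leftrightarrow t^{-1}$ and hence cancels in $\mathfrak{F}_{F(K)}(t)=W_K(t)-W_K(t^{-1})$. Your treatment is in fact slightly more careful than the paper's, since you track a general writhe $\varepsilon$ and handle the index-zero case explicitly rather than assuming $w(c)=+1$ without loss of generality.
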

\begin{proof}
It suffices to prove that if another virtual knot $K'$ can be obtained from $K$ by some generalized Reidemeister moves and crossing changes, then $\mathfrak{F}_{F(K)}(t)=\mathfrak{F}_{F(K')}(t)$. Since $W_K(t)$ is invariant under generalized Reidemeister moves, it is sufficient to consider the case that $K'$ can be obtained from $K$ by taking crossing change on a crossing point $c$. Without loss of generality, we assume the index of $c$ in $K$ equals to $k$ and the writhe of it is positive. Then after switching $c$ this crossing has index $-k$ and negative sign in $K'$. Note that the indices and writhes of all other crossings are preserved. Now we have
\begin{center}
$\mathfrak{F}_{F(K)}(t)=f(t)+t^k-f(t^{-1})-t^{-k}=f(t)-t^{-k}-f(t^{-1})+t^{k}=\mathfrak{F}_{F(K')}(t)$.
\end{center}
The proof is finished.
\end{proof}
\begin{example}
Consider the virtual knot $K$ and its corresponding flat virtual knot $F(K)$ in Figure \ref{figure6}. Direct calculation shows that $W_K(t)=t^2+2t^{-1}$. Hence $\mathfrak{F}_{F(K)}(t)=t^2-2t+2t^{-1}-t^{-2}$, which means $F(K)$ is a nontrivial flat virtual knot.
\end{example}
\begin{figure}[h]
\centering
\includegraphics{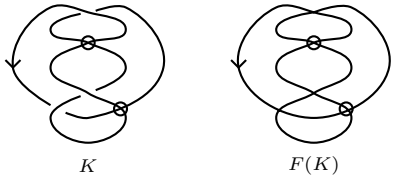}\\
\caption{Virtual knot $K$ and its corresponding flat virtual knot $F(K)$}\label{figure6}
\end{figure}
\begin{corollary}
Let $K$ be a virtual knot, if $W_K(t)\neq W_K(t^{-1})$, then the corresponding flat virtual knot $F(K)$ is nontrivial.
\end{corollary}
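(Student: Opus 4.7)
The plan is to deduce this corollary immediately from the preceding proposition, by evaluating the invariant $\mathfrak{F}_{F(K)}(t)$ on the trivial flat virtual knot and applying the contrapositive.

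First I would recall that Proposition~3.3 established $\mathfrak{F}_{F(K)}(t) = W_K(t) - W_K(t^{-1})$ as a \emph{flat} virtual knot invariant, meaning its value depends only on the equivalence class $F(K)$. So $\mathfrak{F}$ takes the same value on all diagrams representing the trivial flat virtual knot.

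Next I would compute $\mathfrak{F}$ on the trivial flat virtual knot. The most convenient representative is the standard unknotted circle with no crossings at all: for this diagram $K_0$ there are no real crossings, so every coefficient $a_n(K_0)$ $(n\neq 0)$ of the writhe polynomial vanishes, giving $W_{K_0}(t) = 0$ and hence $\mathfrak{F}_{F(K_0)}(t) = 0$.

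Finally I would take the contrapositive. Under the hypothesis $W_K(t)\neq W_K(t^{-1})$, we have $\mathfrak{F}_{F(K)}(t)\neq 0$, whereas the invariant vanishes on the trivial flat class; therefore $F(K)$ cannot be the trivial flat virtual knot. Since there is no calculation beyond the two-line observation $W_{K_0}(t)=0$, this argument presents no real obstacle — the work was already done in Proposition~3.3, and the corollary is essentially a rephrasing of the fact that $\mathfrak{F}$ distinguishes $F(K)$ from the trivial flat class.
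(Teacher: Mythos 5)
Your argument is correct and is exactly the (implicit) reasoning the paper intends: the corollary follows from Proposition 3.3 by evaluating $\mathfrak{F}$ on the crossingless diagram of the trivial flat virtual knot, where $W_{K_0}(t)=0$, and taking the contrapositive. Nothing further is needed.
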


\subsection{Finite type invariants of degree 1}
As we mentioned in the introduction, several finite type invariants of degree 1 have been discussed in \cite{Saw2003} and \cite{Hen2010}. For the coefficients of the affine index polynomial we have the following result.
\begin{proposition}[\cite{Dye2013}]
For each virtual knot $K$ and any $n\in\mathds{Z}$, $a_n(K)$ is a finite type invariant of degree 1.
\end{proposition}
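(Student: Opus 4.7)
The plan is to verify directly that
\[
a_n(K_{++}) - a_n(K_{+-}) - a_n(K_{-+}) + a_n(K_{--}) = 0
\]
for every resolution of a singular virtual knot at two singular crossings $c_1, c_2$, where $K_{\epsilon_1\epsilon_2}$ denotes the diagram obtained by giving $c_i$ the sign $\epsilon_i$. After that I would exhibit a single crossing change that alters $a_n$, to confirm the degree is exactly $1$ rather than $0$.

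The workhorse is Proposition 2.3(7), which states that $\mathrm{Ind}(c)$ is preserved when another real crossing is switched. Together with the obvious invariance of $w(c)$ under crossing changes at different crossings, this forces every real crossing $c \neq c_1, c_2$ to contribute the same value to $a_n$ in all four resolutions $K_{\pm\pm}$. The alternating coefficients $+1, -1, -1, +1$ then annihilate all such contributions. The writhe offset in the $n=0$ case of the definition is handled the same way, since the alternating sum of $w(K_{\pm\pm})$ equals $(W+2)-W-W+(W-2)=0$; alternatively, one can reduce the $n=0$ case to $n \neq 0$ via the identity $a_0 = -\sum_{m\neq 0} a_m$ already recorded in Section 3.2.

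The core of the argument is the cancellation of $c_1$'s and $c_2$'s own contributions. By (7), $\mathrm{Ind}(c_1)$ is unchanged when $c_2$ is switched, so it takes a single value $k_1$ in both $K_{++}$ and $K_{+-}$. From the Gauss-diagram formula $\mathrm{Ind}(c) = r_+(c)-r_-(c)-l_+(c)+l_-(c)$, switching $c_1$ reverses the direction of its chord, swapping ``left of $c_1$'' with ``right of $c_1$'' and thereby sending $\mathrm{Ind}(c_1)$ to $-k_1$; simultaneously $w(c_1)$ flips from $+1$ to $-1$. Consequently $c_1$ contributes $[k_1 = n]$ to $a_n(K_{++})$ and $a_n(K_{+-})$, and $-[-k_1 = n]$ to $a_n(K_{-+})$ and $a_n(K_{--})$, with alternating sum
\[
[k_1=n] - [k_1=n] + [-k_1=n] - [-k_1=n] = 0.
\]
The identical argument for $c_2$ finishes the vanishing of $a_n^{(2)}$.

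To see that the degree is not $0$, note that a degree-$0$ invariant must be constant under every single crossing change, hence must agree on any virtual knot and its mirror image. But the virtual trefoil has $a_1 = 1$ (from $W_K(t) = t + t^{-1}$ computed in Section 3.2) while its mirror has $a_1 = -1$ (from $W_K(t) = -t - t^{-1}$), ruling out degree $0$. The only real hazard in the proof is sign bookkeeping: the joint sign-flip of $\mathrm{Ind}(c_i)$ and $w(c_i)$ under a crossing change at $c_i$ must be tracked carefully. Once (7) is invoked to isolate the $c_1, c_2$ contributions and these sign flips are pinned down, the rest of the cancellation is mechanical.
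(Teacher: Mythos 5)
Your verification that $a_n^{(2)}$ vanishes on all $2$-singular virtual knots is correct and is in substance the paper's own argument: the paper packages the same bookkeeping into the four affine index polynomials $P_{K_{\pm\pm}}(t)=f(t)\pm t^{\pm a}\pm t^{\pm b}+\cdots$, and the two facts you isolate --- Proposition 2.3(7) freezes the contribution of every crossing other than $c_1,c_2$, while switching $c_i$ itself sends $(\mathrm{Ind}(c_i),w(c_i))$ to $(-\mathrm{Ind}(c_i),-w(c_i))$ --- are exactly what those four displayed polynomials encode. Your treatment of the $n=0$ writhe offset (or the reduction via $a_0=-\sum_{m\neq0}a_m$) is also fine.

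The gap is in the second half. The paper's definition of ``finite type invariant of degree $1$'' is exact: besides $f^{(2)}\equiv 0$, one must exhibit, \emph{for each} $n\in\mathds{Z}$, a singular virtual knot with one singularity on which $a_n^{(1)}\neq 0$ (equivalently, a single crossing change that alters $a_n$). Your witness --- the virtual trefoil versus its mirror --- only does this for $n\in\{-1,0,1\}$: for $|n|\geq 2$ both the trefoil and its mirror have $a_n=0$, so the comparison says nothing, and the proposition as stated covers all $n$. (Also, for the inference ``degree $0$ $\Rightarrow$ equal on $K$ and its mirror'' you need the mirror obtained by switching all classical crossings, since that is the one reachable by crossing changes; with that convention the computation $a_1=1$ versus $-1$ is correct.) The missing step is supplied in the paper by an explicit family of examples: take the Gauss diagram consisting of $n$ parallel positive chords all intersected by one positive chord $c$ with $\mathrm{Ind}(c)=n$, and make $c$ the singular crossing; resolving $c$ positively and negatively gives $P^{(1)}_K(t)=t^n-t^{-n}-2$, so $a_n^{(1)}(K)\neq 0$ for every $n\geq 1$ (and hence for every $n\neq 0$ by symmetry, with $a_0^{(1)}\neq0$ as well). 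Adding such a family --- or any diagram containing a crossing of index $n$ whose switch changes $a_n$ --- closes the gap; without it you have only shown that $a_n$ has degree at most $1$ for general $n$.
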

\begin{proof}
First we prove that $a_n^{(2)}(K)$ vanishes on each singular virtual knot with 2 singularities. Let $K$ be a virtual knot diagram and $c_1, c_2$ two real crossing points of $K$. Without loss of generality we assume $w(c_1)=w(c_2)=+1$, Ind$(c_1)=a$ and Ind$(c_2)=b$. Denote the diagram obtained from $K$ by switching $c_1$ as $K_{-+}$. Similarly we can define $K_{++}, K_{+-}$ and $K_{--}$. Note that $K_{++}=K$. Then we have
\begin{center}
$P_{K_{++}}(t)=f(t)+t^a+t^b$,\\ $P_{K_{-+}}(t)=f(t)-t^{-a}+t^b+2$,\\
$P_{K_{+-}}(t)=f(t)+t^{a}-t^{-b}+2$,\\ $P_{K_{--}}(t)=f(t)-t^{-a}-t^{-b}+4$.
\end{center}
It follows that $P_{K_{++}}(t)-P_{K_{-+}}(t)-P_{K_{+-}}(t)+P_{K_{--}}(t)=0$.

Now for each $n\in \mathds{Z}$ it suffices to find a singular virtual knot with one singularity such that $a_n^{(1)}$ is nonzero. Consider a Gauss diagram which consists of $n\geq1$ parallel positive chords (with the same direction) and one other positive chord $c$ crossing all of them, such that Ind$(c)=n$. Replace the real crossing $c$ with a singular crossing we obtain a singular virtual knot $K$. According to the definition we have
\begin{center}
$P_K^{(1)}(t)=(t^n+nt^{-1}-n-1)-(-t^{-n}+nt^{-1}-n+1)=t^n-t^{-n}-2$,
\end{center}
which implies $a_n^{(1)}(K), a_{-n}^{(1)}(K)$ and $a_0^{(1)}(K)$ are nonzero.
\end{proof}

There are several different kinds of generalization of the writhe polynomial. For example, Henrich in \cite{Hen2010} defined three virtual knot invariants. The first one is special case of the writhe polynomial. The other two invariants, called the smoothing invariant and the gluing invariant, are polynomials of flat virtual knots and singular flat virtual knots respectively. Henrich proved they are all finite type invariant of degree one. One can simply generalize the writhe polynomial  using the idea of smoothing invariant (see also \cite{Dye2013}) and gluing invariant. In \cite{Che2016} we gave a completely different extension of the writhe polynomial by replacing the chord index with an index function. It was proved that this generalized invariant is also a finite type invariant of degree one. We refer the readers to \cite{Che2016} for more details.

\subsection{Finite type invariants of higher degrees}
In \cite{Pol1994}, Michael Polyak and Oleg Viro gave a description of finite type invariants of degree two and three by means of Gauss diagram representations. By using virtual knots, Mikhail Goussarov \cite{Gou2000} proved that any finite type invariant of classical knots can be described by a Gauss diagram formula. In other words, any finite type invariant of classical knots can be calculated by counting the subdiagrams of the Gauss diagram with weights, where the weight of a subdiagram is the product of the writhes of all the chords in the subdiagram. For virtual knots, this weight can be strengthened by including the information of chord indices. From this point of view, Proposition 3.6 can be reinterpreted as the signed sum of all subdiagrams which consists of only one chord with a fixed nonzero index is a finite type invariant of degree 1.

It turns out that this idea can be naturally generalized to finite type invariants of higher degrees.

Let $G(K)$ be a Gauss diagram. For any $(x_1, \cdots, x_n)\in(\mathds{Z}-\{0\})^n$ which satisfies $x_1>\cdots>x_n$, we define $C_{(x_1, \cdots, x_n)}$ to be the set of all $n$-chords $\{c_1, \cdots, c_n\}$ in $G(K)$ such that Ind$(c_i)=x_i$.
\begin{theorem}
For each virtual knot $K$, the integer $a_{(x_1, \cdots, x_n)}(K)=\sum\limits_{(c_1, \cdots, c_n)\in C_{(x_1, \cdots, x_n)}}\prod\limits_{i=1}^nw(c_i)$ is a finite type invariant of degree $n$.
\end{theorem}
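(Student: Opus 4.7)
My plan is to establish three claims: invariance of $a_{(x_1,\dots,x_n)}$ under generalized Reidemeister moves; vanishing of $a_{(x_1,\dots,x_n)}^{(n+1)}$ on every singular virtual knot with $n+1$ singularities; and non-vanishing of $a_{(x_1,\dots,x_n)}^{(n)}$ on some singular virtual knot with $n$ singularities. The first is immediate from Proposition 2.3: under $\Omega_1$ the new chord is isolated so has index $0\notin\{x_1,\dots,x_n\}$; under $\Omega_2$ the two new chords share an index and carry opposite writhes, and since the $x_j$ are distinct at most one of them can appear in any counted tuple, so the two resulting tuples cancel pairwise; under $\Omega_3$ all relevant chord indices and writhes are preserved.

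For the vanishing step, let $K$ have singular crossings $d_1,\dots,d_{n+1}$ and set $\epsilon_j=(-1)^{\sigma_j}$, so the resolution $K_\sigma$ assigns $d_j$ writhe $\epsilon_j$. Proposition 2.3(7) says that switching $d_j$ only flips the index and writhe of $d_j$ itself, sending its index $\alpha_j$ to $-\alpha_j$, while every other chord of $K$ keeps its index under arbitrary resolutions. I would expand $\sum_\sigma(-1)^{|\sigma|}a_{(x_1,\dots,x_n)}(K_\sigma)$ by classifying each counted tuple $(c_1,\dots,c_n)$ according to the subset $T\subseteq\{1,\dots,n\}$ of positions occupied by some $d_j$, the injection $\psi\colon T\to\{1,\dots,n+1\}$ indicating which $d_j$ sits in which position, and a $\sigma$-independent choice of non-$d$ chord for each position outside $T$. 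Swapping the order of summation and using $(-1)^{|\sigma|}=\prod_j\epsilon_j$ together with $\epsilon_j^2=1$, the inner sum over $\sigma$ collapses to $\prod_{j\notin\psi(T)}\sum_{\sigma_j\in\{0,1\}}\epsilon_j$ times a factor depending only on $\psi$ and the non-$d$ choices; each factor in this product is $0$. Since $|T|\le n<n+1$ there is always at least one free index, so every term dies.

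For the non-vanishing step I would concatenate along the Gauss circle $n$ blocks $B_1,\dots,B_n$ modeled on the construction in the proof of Proposition 3.6: block $B_i$ carries a distinguished chord $d_i$ of positive writhe and index $x_i$, with all its auxiliary chords supported on an arc of the Gauss circle disjoint from the other blocks, so that chords in different blocks do not cross on the Gauss diagram. Declaring $d_1,\dots,d_n$ singular, the same expansion as above, now with $\psi\colon T\to\{1,\dots,n\}$, forces only tuples with $T=\{1,\dots,n\}$ and $\psi$ a bijection to survive; on these, $(-1)^{|\sigma|}\prod_i\epsilon_{\psi(i)}$ simplifies to $\prod_j\epsilon_j^2=+1$. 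The identity permutation with $\sigma=0$ always satisfies the sign constraint $\epsilon_{\psi(i)}x_{\psi(i)}=x_i$, so $a_{(x_1,\dots,x_n)}^{(n)}(K)$ is a positive integer. The step I expect to be most delicate is checking that the auxiliary chords inside the blocks cause no trouble, but this is automatic: any counted tuple involving an auxiliary chord has $|T|<n$, leaves some $d_j$ free, and cancels in the $\sigma$-sum by exactly the mechanism of the vanishing step.
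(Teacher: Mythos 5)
Your proposal is correct, and its skeleton (invariance via Proposition 2.3; vanishing of the $(n+1)$-st derivative; sharpness via an explicit Gauss diagram whose $i$-th distinguished chord has index $x_i$ thanks to a family of parallel auxiliary chords) matches the paper's. The real difference lies in how the sharpness step is finished. The paper evaluates $\sum_{\sigma}(-1)^{|\sigma|}a_{(x_1,\dots,x_n)}(K_\sigma)$ on its model diagram by a four-case analysis, depending on whether $1$ or $-1$ occurs among the $x_i$ and whether opposite pairs $x_i=-x_j$ occur, because the auxiliary chords have index $\pm1$ and can themselves enter eligible tuples (Case 4 requires the $A_1,\dots,A_4$ bookkeeping and ends with the value $2A_1$). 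You instead reuse the cancellation mechanism of the vanishing step uniformly: any eligible tuple that fails to occupy all $n$ positions with singular chords leaves some $d_j$ free, and since Proposition 2.3(7) makes the indices and writhes of all other chords independent of the resolution of $d_j$, the sum over $\sigma_j$ contributes a factor $\sum_{\sigma_j}\epsilon_j=0$; hence all tuples involving auxiliary chords die, only bijective assignments of the $d_j$ to the positions survive, each with total sign $\prod_j\epsilon_j^2=+1$, and the identity assignment with $\sigma=0$ always contributes, giving a strictly positive value. This factorization argument buys a shorter, case-free proof of sharpness that works verbatim whether or not $\pm1$ or opposite pairs appear among the $x_i$ (I checked it reproduces the paper's answer, e.g. the value $2$ in the minimal $x=(1,-1)$ configuration), at the modest cost of insisting that the blocks be supported on disjoint arcs, which the paper does not need to state explicitly. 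Your vanishing step is the paper's argument (fix the resolutions of the in-tuple singular chords, average over the free ones) recast as the same product formula, so no difference of substance there.
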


Note that Proposition 3.6 can be recovered from this theorem by taking $n=1$.
\begin{proof}
First we show that $a_{(x_1, \cdots, x_n)}(K)$ is a virtual knot invariant. For $\Omega_1$, since each chord in $\{c_1, \cdots, c_n\}$ has nonzero index, therefore each $n$-chords in $C_{(x_1, \cdots, x_n)}$ does not contain the chord appears in $\Omega_1$. For $\Omega_2$, let us use $a, b$ to denote the two chords involved in $\Omega_2$. Since $x_i\neq x_j$ if $i\neq j$, hence each $n$-chords in $C_{(x_1, \cdots, x_n)}$ does not contain both $a$ and $b$. If some $n$-chords $\{c_1, \cdots, c_n\}$ contains $a$, for example $c_i=a$. Then the contribution of $\{c_1, \cdots, c_{i-1}, a, c_{i+1}, \cdots, c_n\}$ will be cancelled out by the contribution from $\{c_1, \cdots, c_{i-1}, b, c_{i+1}, \cdots, c_n\}$. For $\Omega_3$, since the index and writhe of each chord are both preserved, it follows directly that $a_{(x_1, \cdots, x_n)}$ is invariant under $\Omega_3$.

Now we show that the degree of $a_{(x_1, \cdots, x_n)}$ is less than $n+1$. It is sufficient to show that for any singular virtual knot diagram $K$ with $n+1$ singular crossings $\{c_1, \cdots, c_{n+1}\}$, we have
\begin{center}
$\sum\limits_{\sigma\in\{0, 1\}^{n+1}}(-1)^{|\sigma|}a_{(x_1, \cdots, x_n)}(K_{\sigma})=0$.
\end{center}
As before here $\sigma$ runs through all $(n+1)$-tuples of zeros and ones, $|\sigma|$ denotes the number of ones in $\sigma$ and $K_{\sigma}$ is obtained from $K$ by replacing $c_i$ with a positive (negative) crossing if the $i$-th position of $\sigma$ is zero (one). Fix a $\sigma\in\{0, 1\}^{n+1}$, choose a $n$-chords $\{c_1', \cdots, c_n'\}$ of $C_{(x_1, \cdots, x_n)}$ in $G(K_{\sigma})$. It is possible that some chords may appear both in $\{c_1, \cdots, c_{n+1}\}$ and $\{c_1', \cdots, c_n'\}$. Without loss of generality we assume that $c_1'=c_1, \cdots, c_i'=c_i$ but $\{c_{i+1}', \cdots, c_n'\}\cap\{c_{i+1}, \cdots, c_{n+1}\}=\emptyset$. Notice that the index of $c_i'$ $(1\leq i\leq n)$ is invariant under switching $c_j$ $(i+1\leq j\leq n+1)$. Now fix the first $i$ positions of $\sigma$ and let the last $n+1-i$ positions of $\sigma$ run over $\{0, 1\}^{n+1-i}$, then the contributions of $\{c_1', \cdots, c_n'\}$ from these $2^{n+1-i}$ virtual knots will cancel out, because half of them have the positive sign and the other half have the negative sign. Repeat this process until the contributions from all $n$-chords have been cancelled out, the desired result follows directly.

To complete the proof we need to show that the degree of $a_{(x_1, \cdots, x_n)}$ is exactly $n$. Let us consider the Gauss diagram in Figure \ref{figure21}, which contains $\sum\limits_{i=1}^n|x_i|+n$ positive chords (Figure \ref{figure21} supposes that $x_1>0$ and $x_n<0$). Notice that Ind$(c_i)=x_i$ and the index of any other chord equals $\pm1$. After replacing the crossings $c_1, \cdots, c_n$ with singular crossings, we obtain a singular virtual knot $K$ which has $n$ singular crossing points. In other words, the virtual knot corresponding to the Gauss diagram described in Figure \ref{figure21} is $K_{(0, \cdots, 0)}$. We claim that for this singular virtual knot $K$ we have $\sum\limits_{\sigma\in\{0, 1\}^{n}}(-1)^{|\sigma|}a_{(x_1, \cdots, x_n)}(K_{\sigma})\neq0$.
\begin{figure}[h]
\centering
\includegraphics{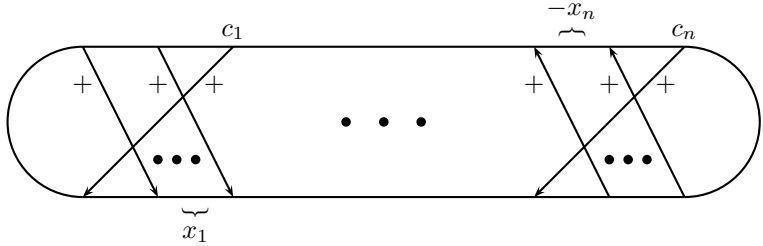}\\
\caption{$K_{(0,\cdots,0)}$}\label{figure21}
\end{figure}
\begin{itemize}
  \item Case 1: for any $1\leq i\leq n$, $x_i\neq\pm1$. If for any $1\leq i<j\leq n$ we have $x_i+x_j\neq0$, then among all $K_{\sigma}$ there exists only one $n$-chords $(c_1, \cdots, c_n)$ which satisfies Ind$(c_i)=x_i$ and it appears in $K_{(0,\cdots,0)}$. In this case we have $\sum\limits_{\sigma\in\{0, 1\}^{n}}(-1)^{|\sigma|}a_{(x_1, \cdots, x_n)}(K_{\sigma})=1$. In general, if there exists a pair of opposite integers $x_i$ and $x_j$. Then another eligible $n$-chords $(c_1, \cdots, c_n)$ also appears in $K_{(0,\cdots, 0, 1, 0, \cdots, 0, 1, 0, \cdots, 0)}$, where the $i$-th and $j$-th positions are 1's. However the contribution from this $n$-chords equals $(-1)^4=1$. We still have $\sum\limits_{\sigma\in\{0, 1\}^{n}}(-1)^{|\sigma|}a_{(x_1, \cdots, x_n)}(K_{\sigma})\neq0$. When there are more pairs of opposite integers in $(x_1, \cdots, x_n)$ one can similarly show that each $n$-chords $(c_1, \cdots, c_n)$ with Ind$(c_i)=x_i$ contributes $+1$ to $\sum\limits_{\sigma\in\{0, 1\}^{n}}(-1)^{|\sigma|}a_{(x_1, \cdots, x_n)}(K_{\sigma})$, the result follows.
  \item Case 2: for some $1\leq i\leq n$, $x_i=1$ but $x_{i+1}\neq-1$. Since we assume that $x_1>\cdots>x_n$, it means that no element of $\{x_1, \cdots, x_n\}$ equals $-1$. In this case, we divide all virtual knots $K_{\sigma}$ into two subsets $K_{(\epsilon_1, \cdots, \epsilon_{i-1}, 0, \epsilon_{i+1}, \cdots, \epsilon_n)}$ and $K_{(\epsilon_1, \cdots, \epsilon_{i-1}, 1, \epsilon_{i+1}, \cdots, \epsilon_n)}$, where $\epsilon_j\in\{0, 1\}$ $(j\neq i)$. A key observation is, when one chooses an eligible $n$-chords from $K_{(\epsilon_1, \cdots, \epsilon_{i-1}, 0, \epsilon_{i+1}, \cdots, \epsilon_n)}$, if the chord with index 1 is not the chord $c_i$ in Figure \ref{figure21}, then one can find a corresponding eligible $n$-chords in $K_{(\epsilon_1, \cdots, \epsilon_{i-1}, 1, \epsilon_{i+1}, \cdots, \epsilon_n)}$. Notice that these two $n$-chords have opposite signs, consequently their contributions kill each other. For this reason we only need to consider the eligible $n$-chords in $K_{(\epsilon_1, \cdots, \epsilon_{i-1}, 0, \epsilon_{i+1}, \cdots, \epsilon_n)}$ where the chord with index 1 is exactly the chord $c_i$ in Figure \ref{figure21}. Similar to Case 1, one can show that each eligible $n$-chords contributes $+1$ to $\sum\limits_{\sigma\in\{0, 1\}^{n}}(-1)^{|\sigma|}a_{(x_1, \cdots, x_n)}(K_{\sigma})$, which implies $\sum\limits_{\sigma\in\{0, 1\}^{n}}(-1)^{|\sigma|}a_{(x_1, \cdots, x_n)}(K_{\sigma})\neq0$.
  \item Case 3: for some $1\leq i\leq n$, $x_{i+1}=-1$ but $x_i\neq1$. The proof is analogous to the proof of Case 2.
  \item Case 4: for some $1\leq i\leq n$, $x_i=1$ and $x_{i+1}=-1$. Now we divide all virtual knots $K_{\sigma}$ into 4 subsets: $K_{(\epsilon_1, \cdots, \epsilon_{i-1}, 0, 0, \epsilon_{i+2}, \cdots, \epsilon_n)}, K_{(\epsilon_1, \cdots, \epsilon_{i-1}, 1, 0, \epsilon_{i+2}, \cdots, \epsilon_n)}, K_{(\epsilon_1, \cdots, \epsilon_{i-1}, 0, 1, \epsilon_{i+2}, \cdots, \epsilon_n)}, K_{(\epsilon_1, \cdots, \epsilon_{i-1}, 1, 1, \epsilon_{i+2}, \cdots, \epsilon_n)}$.

      We write
      \begin{center}
      $\sum\limits_{\sigma\in\{0, 1\}^{n-2}}(-1)^{|\sigma|}a_{(x_1, \cdots, x_n)}(K_{\sigma=(\epsilon_1, \cdots, \epsilon_{i-1}, 0, 0, \epsilon_{i+2}, \cdots, \epsilon_n)})=A_1+A_2+A_3+A_4$,
      \end{center}
      where
      \begin{enumerate}
        \item $A_1$ denotes the contributions from those $n$-chords which contain both $c_i$ and $c_{i+1}$;
        \item $A_2$ denotes the contributions from those $n$-chords which contain $c_i$ but do not contain $c_{i+1}$;
        \item $A_3$ denotes the contributions from those $n$-chords which contain $c_{i+1}$ but do not contain $c_i$;
        \item $A_4$ denotes the contributions from those $n$-chords which contain neither $c_i$ nor $c_{i+1}$.
      \end{enumerate}
      Analogous to Case 1, one can show that $A_1>0$. It is not difficult to observe that
      \begin{center}
      $\sum\limits_{\sigma\in\{0, 1\}^{n-2}}(-1)^{|\sigma|}a_{(x_1, \cdots, x_n)}(K_{\sigma=(\epsilon_1, \cdots, \epsilon_{i-1}, 1, 0, \epsilon_{i+2}, \cdots, \epsilon_n)})=0+A_3+A_3+A_4$,\\
      $\sum\limits_{\sigma\in\{0, 1\}^{n-2}}(-1)^{|\sigma|}a_{(x_1, \cdots, x_n)}(K_{\sigma=(\epsilon_1, \cdots, \epsilon_{i-1}, 0, 1, \epsilon_{i+2}, \cdots, \epsilon_n)})=0+A_2+A_2+A_4$,\\
      $\sum\limits_{\sigma\in\{0, 1\}^{n-2}}(-1)^{|\sigma|}a_{(x_1, \cdots, x_n)}(K_{\sigma=(\epsilon_1, \cdots, \epsilon_{i-1}, 1, 1, \epsilon_{i+2}, \cdots, \epsilon_n)})=A_1+A_3+A_2+A_4$.
      \end{center}
      It follows that
      \begin{center}
      $\sum\limits_{\sigma\in\{0, 1\}^{n}}(-1)^{|\sigma|}a_{(x_1, \cdots, x_n)}(K_{\sigma})=2A_1\neq0$.
      \end{center}
\end{itemize}
\end{proof}

Unfortunately, we would like to remark that this degree $n$ finite type invariant $a_{(x_1, \cdots, x_n)}(K)$ is not a new invariant. It is completely determined by the writhe polynomial. Explicitly, $a_{(x_1, \cdots, x_n)}(K)=\prod\limits_{i=1}^na_{x_i}$ where $a_{x_i}$ is the coefficient of $t^{x_i}$ in the writhe polynomial. In other words, this finite type invariant of degree $n$ is determined by the writhe polynomial, a finite type invariant of degree 1.

In order to obtain some new finite type invariant one needs to ``refine" the chord indices in a subdiagram with more than one chords. For finite type invariants of degree 2, recently the three loop isotopy invariant was introduced by Micah Chrisman and Heather Dye in \cite{Chr2014}. Roughly speaking, the three loop isotopy invariant counts the contributions from the pairs of nonintersecting chords with the same ``triple-index". We end this section with a quick review of this interesting invariant.

Let $G(K)$ be a Gauss diagram and $\mathfrak{c}$ a pair of nonintersecting chords in $G(K)$. We use $T$ to denote the set of all other chords which have nonempty intersection with $\mathfrak{c}$. According to the three possibilities of the orientations of the two chords in $\mathfrak{c}$, the set $T$ can be divided into three subsets $T_1, T_2, T_3$, , see Figure \ref{figure7}. Analogous to the definition of the chord index, for each set $T_i$ we can define an index $t_i$ of $\mathfrak{c}$ which takes values in integers. By taking the absolute value, now $|t_i|$ does not depend on the directions of the chords in $\mathfrak{c}$, hence it is well defined. We call this triple $(|t_1|, |t_2|, |t_3|)$ the \emph{triple-index} of $\mathfrak{c}$.
\begin{figure}[h]
\centering
\includegraphics{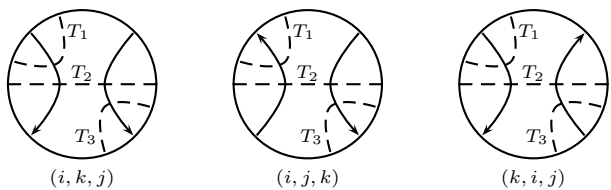}\\
\caption{The set $F_{i, j, k}$}\label{figure7}
\end{figure}

For a fixed triple $(i, j ,k)$ $(i, j, k\geq0$ and $i\neq j\neq k\neq i)$, we define a set $F_{i, j, k}$ which contains all the pairs of nonintersecting chords such that the triple-index of each pair agrees with one of the three cases depicted in Figure \ref{figure7}. Now the \emph{three loop isotopy invariant} of a virtual knot $K$ can be described as
\begin{center}
$\phi(K)=\sum\limits_{\mathfrak{c}\in F_{i, j, k}}w(\mathfrak{c})x^iy^jz^k$,
\end{center}
where $w(\mathfrak{c})$ denotes the product of the writhes of the two chords in $\mathfrak{c}$. We remark that in the original definition of the three loop isotopy invariant, the three loop isotopy invariant $\phi_{i, j, k}(K)$ was defined as the coefficient of $x^iy^jz^k$. Here we rewrite them in the form of a multivariate polynomial.
\begin{theorem}[\cite{Chr2014}]
$\phi(K)$ is a finite type virtual knot invariant of degree 2.
\end{theorem}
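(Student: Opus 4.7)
The plan is to follow the same three-step template used in the proof of Theorem 3.7: verify that $\phi(K)$ is unchanged under each generalized Reidemeister move, establish $\phi^{(3)}\equiv 0$ on singular virtual knots with three singular crossings, and exhibit a singular virtual knot with two singularities on which $\phi^{(2)}\neq 0$.

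For Reidemeister invariance the argument runs one level up from the proof of Theorem 3.7, on pairs of chords rather than single chords. The chord added by $\Omega_1$ is isolated, so any pair containing it has triple-index $(0,0,0)$, which lies outside every $F_{i,j,k}$ because the definition requires $i,j,k$ to be pairwise distinct. The two chords $a,b$ produced by $\Omega_2$ share the same intersection pattern with every other chord but carry opposite writhes, so pairs of the form $\{a,d\}$ and $\{b,d\}$ cancel in the signed sum, while $\{a,b\}$ itself has triple-index $(0,0,0)$ for the same reason as in $\Omega_1$. For $\Omega_3$ one inspects the local Gauss-diagram model of the move and checks that the intersection pattern of the three participating chords with every other chord is preserved, so all triple-indices and writhes of pairs are preserved; a short case check handles pairs among the three chords themselves.

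For the degree bound, let $K$ be a singular virtual knot with three singular crossings $c_1,c_2,c_3$. Any nonintersecting pair $\mathfrak{c}$ contains only two chords, so there exists some $c_k\notin\mathfrak{c}$. The writhe product $w(\mathfrak{c})$ clearly does not depend on $\sigma_k$; moreover, reversing the direction and sign of a chord outside $\mathfrak{c}$ merely swaps certain terms in the signed counts $t_1,t_2,t_3$, leaving the unordered triple $(|t_1|,|t_2|,|t_3|)$, and hence membership in $F_{i,j,k}$, unchanged. This is the pair-level analogue of Proposition 2.3(7). The contribution of $\mathfrak{c}$ to $\sum_\sigma(-1)^{|\sigma|}\phi(K_\sigma)$ therefore factors through $\sum_{\sigma_k}(-1)^{\sigma_k}=0$, so $\phi^{(3)}(K)=0$.

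For sharpness, fix $(i,j,k)$ with $i,j,k\geq 0$ pairwise distinct, and construct a Gauss diagram containing two nonintersecting positive chords $c_1,c_2$ crossed by auxiliary positive chords in exactly the pattern needed to realize triple-index $(i,j,k)$ for $\{c_1,c_2\}$ and no other occurrence of this triple-index among pairs of nonintersecting chords. Declaring $c_1,c_2$ to be singular, only the pair $\{c_1,c_2\}$ contributes to $\phi^{(2)}$. Reversing one of $c_1,c_2$ permutes the three cases of Figure \ref{figure7} but preserves the unordered triple-index, so this pair lies in $F_{i,j,k}$ for all four resolutions, while its writhe product is $(-1)^{\sigma_1+\sigma_2}$; the alternating sum evaluates to $4x^iy^jz^k\neq 0$. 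The main obstacle in making this rigorous is precisely the pair-level analogue of Proposition 2.3(7), together with the careful $\Omega_3$ analysis: one must check case by case that when a chord crossing both members of a pair migrates between the three configurations of Figure \ref{figure7}, it does so in a way that preserves the unordered multiset $\{|t_1|,|t_2|,|t_3|\}$. This is the technical heart of the argument in \cite{Chr2014}.
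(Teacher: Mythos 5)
Two preliminary remarks. First, the paper does not prove this statement at all: it is quoted from Chrisman--Dye \cite{Chr2014}, so there is no internal proof to compare your argument with; your three-step template (Reidemeister invariance, vanishing of the third alternating sum, sharpness on a two-singularity example) is the natural architecture and matches the proof of Theorem 3.7. Second, the last two steps of your sketch are essentially sound: the pair-level analogue of Proposition 2.3(7) does hold, because a crossing change flips both the writhe and the direction of the switched chord, so its contribution to each signed count $t_m$ of any pair not containing it is unchanged (in fact each $t_m$, not just the unordered triple of absolute values, is preserved); this yields $\phi^{(3)}\equiv 0$, and your construction with two singular nonintersecting chords realizing a prescribed triple-index gives a nonzero $\phi^{(2)}$, since pairs containing at most one singular chord cancel when you sum over the resolution of the other one.

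The genuine gap is in the invariance step, and precisely where the real content of the theorem lies. Your inference for $\Omega_3$ --- ``the intersection pattern of the three participating chords with every other chord is preserved, so all triple-indices and writhes of pairs are preserved'' --- is not valid. Under $\Omega_3$ the order of the two endpoints inside each of the three arcs of the Gauss diagram is reversed, and therefore the mutual crossing status of each pair among the three move chords toggles. Hence, for a pair $\{c,d\}$ with $c$ one of the move chords and $d$ an outside chord, the other two move chords can enter or leave the set $T$ and migrate between the classes $T_1,T_2,T_3$; preservation of the triple-index of $\{c,d\}$ is not chord-by-chord but requires showing that the net contribution of the other two move chords to each $t_m$ is unchanged, which uses the sign/direction constraints of a genuine $\Omega_3$ configuration (the same case analysis that underlies Proposition 2.3(4)--(5)). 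The toggling also changes which pairs among the three chords are nonintersecting at all, so ``pairs among the three chords themselves'' is not a side remark but part of the same analysis. For $\Omega_2$ you only treat pairs containing one of the two new chords; you also need that their contributions to the $t_m$ of pairs disjoint from them cancel, and in the coherently oriented version the two new chords cross each other, so each lies in the $T$-set of the other's pairs and the cancellation of $\{a,d\}$ against $\{b,d\}$ needs a short sign check (it does work out, since opposite writhe and opposite crossing direction compensate). Finally, the claims that pairs containing the $\Omega_1$ chord, or the pair $\{a,b\}$ itself, have triple-index $(0,0,0)$ are not correct --- one coordinate can be nonzero; what is true is that at least two of the three counts vanish, which still excludes such pairs from every $F_{i,j,k}$ with $i,j,k$ pairwise distinct. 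Since you explicitly defer the $\Omega_3$ case analysis to \cite{Chr2014}, the argument is incomplete exactly at its technical heart.
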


It is an interesting question to find some finite type invariants of higher degrees with the help of refined chord indices.

\section{Indexed Jones polynomial}
\subsection{A generalization of the Jones polynomial using chord index}
We know that the Jones polynomial can be naturally extended to virtual knots by taking the approach of  Kauffman bracket. In this section we discuss how to use the chord index to define a sequence of Jones polynomials indexed by nonnegative  integers.

Let $K$ be a virtual knot diagram and $C(K)$ the set of all real crossings of $K$. We define a sequence of subsets of $C(K)$ as below
\begin{center}
$C_n(K)=\{c\in C(K)|\text{Ind}(c)=kn \text{ for some } k\in\mathds{Z}\}$.
\end{center}
Note that every $C_n(K)$ $(n\in\mathds{Z})$ contains all crossing points with index zero. Since $C_n(K)=C_{-n}(K)$, it suffices to consider $n\in\{0, 1, 2, \cdots\}$. For a fixed $n$, we use the standard Kauffman bracket rules to smooth all crossing points in $C_n(K)$, see Figure \ref{figure8}.
\begin{figure}[h]
\centering
\includegraphics{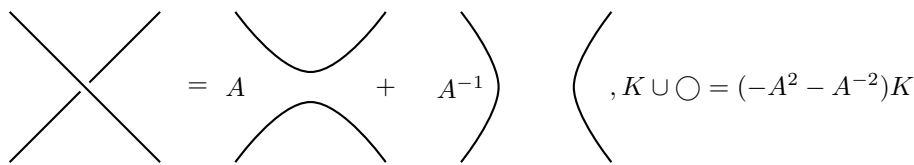}\\
\caption{Smoothing rules}\label{figure8}
\end{figure}

After smoothing all the crossings in $C_n(K)$, we obtain a bracket polynomial $<K>_n$ (here we choose the normalization that the bracket polynomial of the unknot equals one). We define the \emph{Jones polynomial with index $n$} as
\begin{center}
$V_K^n(t)=(-A^3)^{-w(K)}<K>_n|_{A=t^{-\frac{1}{4}}}=(-A^3)^{-w(K)}\sum\limits_SA^{\#_0-\#_1}(-A^2-A^{-2})^{|S|-1}|_{A=t^{-\frac{1}{4}}}$,
\end{center}
where $S$ denotes a state after the smoothing, which is a virtual link diagram, and $|S|$ denotes the number of components of $S$. $\#_0$ and $\#_1$ denote the number of 0-smoothing (the first resolution in Figure \ref{figure8}) and 1-smoothing (the second resolution in Figure \ref{figure8}) taken during the process of smoothing respectively. We remark that for each virtual knot $K$, only finitely many $V_K^n(t)$ are nontrivial. On the other hand, since $C_1(K)=C(K)$, it follows that $V_K^1(t)$ is exactly the classical Jones polynomial of virtual knots.
\begin{theorem}
For each $n\in \{0\}\cup \mathds{N}$, $V_K^n(t)$ is a virtual knot invariant.
\end{theorem}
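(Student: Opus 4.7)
The plan is to verify invariance of $<K>_n$ under each generalized Reidemeister move; the prefactor $(-A^3)^{-w(K)}$ then yields invariance of $V_K^n(t)$. Virtual and semi-virtual moves touch no real crossing and so leave $C_n(K)$ and every partial state intact; those cases are immediate. The classical moves rely on Proposition 2.3.

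For $\Omega_1$, the new crossing is isolated, so by Proposition 2.3(2) its index is $0$, which lies in $C_n(K)$ for every $n \geq 0$. It is therefore smoothed, and the $(-A)^{\pm 3}$ factor from the standard Kauffman calculation is absorbed by the writhe normalization. For $\Omega_2$, Proposition 2.3(3) forces the two participating crossings to share an index, so they are either both smoothed (giving the classical bracket $\Omega_2$-identity) or both retained (in which case they persist in every partial state $S$ and $\Omega_2$ acts on $S$ as a planar topological move, preserving $|S|$, $\#_0$, and $\#_1$).

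For $\Omega_3$, Proposition 2.3(4)--(5) preserves every index, so $C_n$-membership of the three triangle crossings corresponds between $K$ and $K'$. I would rewrite $<K>_n$ as the state sum $\sum_\sigma A^{\#_0(\sigma)-\#_1(\sigma)}(-A^2-A^{-2})^{|K_\sigma|-1}$ over smoothings $\sigma$ of $C_n(K)$, and set up the natural bijection $\sigma \leftrightarrow \sigma'$ matching corresponding $C_n$-crossings with identical smoothings. This preserves $\#_0$ and $\#_1$ automatically, so it remains to check $|K_\sigma| = |K'_{\sigma'}|$. Let $k \in \{0, 1, 2, 3\}$ be the number of triangle crossings in $C_n(K)$. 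If $k = 0$, the residual move is a literal $\Omega_3$ on the three remaining real crossings; if $k = 3$, the triangle region is fully smoothed and the residual move is a planar isotopy. If $k = 1$, the classical Kauffman skein reduction shows that for one smoothing choice on the $C_n$-crossing the residual diagrams are planar-isotopic and for the other they are related by $\Omega_2$ on the two unsmoothed triangle crossings. If $k = 2$, the moving strand has been disconnected from the other two strands, and the residual move is a planar isotopy of that strand across the remaining non-$C_n$ triangle crossing. In all cases the residual move is a Reidemeister equivalence, so the component counts agree.

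The most delicate sub-case is $k = 2$: one must verify that, for each of the four smoothing choices on the two participating $C_n$-crossings, the resulting local arc configurations (before and after $\Omega_3$) actually admit the claimed planar isotopy. Once this bookkeeping is complete the bijection closes the argument, and the specialisation $n = 1$ recovers the classical Jones polynomial as a consistency check.
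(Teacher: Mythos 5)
Your handling of the virtual moves, $\Omega_1$, $\Omega_2$, and the subcases $k\in\{0,1,3\}$ of $\Omega_3$ matches the paper's argument, but the subcase $k=2$ is a genuine gap, and it is exactly the point where the proof needs an idea you do not supply. Your claim that after smoothing the two $C_n$-crossings ``the moving strand has been disconnected from the other two strands'' is false: a Kauffman smoothing does not delete a crossing, it reconnects the four local arms, so the sliding strand becomes joined to the other two strands. If one actually computes the pairings of the six boundary points of the triangle disk for a hypothetical configuration in which the two crossings on the sliding strand are smoothed and the third crossing is retained, the two sides of $\Omega_3$ disagree: before the move the left end of the sliding strand always connects to an end of one fixed strand, after the move it always connects to an end of the other strand, and this happens for all four smoothing choices, with matching weights $A^{\#_0-\#_1}$. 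Hence for suitable closures the component counts, and indeed the whole state sums, differ; the $k=2$ case cannot be closed by more careful bookkeeping, because invariance would actually fail there.

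What saves the theorem, and what the paper's proof invokes, is that $k=2$ never occurs: for the three crossings in an $\Omega_3$ triangle, the index of one equals the sum of the indices of the other two (Lemma 4.1 of \cite{Che2016}, quoted in the paper's proof of Theorem 4.1). Consequently, if two of the three indices are multiples of $n$, so is the third, and the number of triangle crossings lying in $C_n(K)$ is always $0$, $1$ or $3$. This arithmetic fact---and with it the specific choice $C_n(K)=\{c:\ \mathrm{Ind}(c)=kn \text{ for some } k\}$ rather than an arbitrary index set---is the essential ingredient missing from your argument. A smaller point: in your $k=1$ case the unique smoothed crossing need not be the crossing between the two stationary strands (take indices $(n+1,\,n,\,1)$, say), so the ``planar isotopy versus $\Omega_2$'' dichotomy you describe only applies literally in one of the subcases; the remaining subcases still work because only the component counts of states are used and retained real crossings behave like flat crossings for counting components, but they require their own verification.
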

\begin{proof}
It is sufficient to show that $V_K^n(t)$ is invariant under each generalized Reidemeister move in Figure \ref{figure1}. Since no real crossing point is involved in $\Omega_1', \Omega_2', \Omega_3'$, we only need to consider the rest four moves.
\begin{itemize}
  \item $\Omega_1:$ the only crossing point involved in $\Omega_1$ has index zero, hence for each $n$ this crossing point must be smoothed. Similar to the classical case, $(-A^3)^{-w(K)}$ is designed to nullify this change.
  \item $\Omega_2:$ the two crossings involved in $\Omega_2$ have the same index. If this index is not a multiple of $n$, then of course $V_K^n(t)$ is preserved. If the index of these two crossing points is a multiple of $n$, then both of them will be smoothed. The rule that adding a circle disjoint from the rest diagram multiplies the bracket by $-A^2-A^{-2}$ will nullify this change.
  \item $\Omega_3:$ we know that the indices of the three crossing points are preserved respectively under $\Omega_3$. A key fact is, on both sides of $\Omega_3$ the index of one crossing equals the sum of the indices of other two crossings \cite{Che2016}. If none index of these three crossing points is a multiple of $n$, then the result follows obviously. Otherwise either only one of them will be smoothed, or all of them will be smoothed. It is easy to observe that in both cases the number of components of the state is invariant.
  \item $\Omega_3^v:$ there is nothing need to prove in this case.
\end{itemize}
\end{proof}

Recall that the \emph{span} of a polynomial is the difference between the highest degree and the lowest degree. It is well known that the span of the Jones polynomial provides a lower bound for the crossing number of classical knots. The following result is a generalization of this fact.
\begin{proposition}
Let $K$ be a virtual knot diagram, then $|C_n(K)|\geq\text{span}V_K^n(t)$, here $|C_n(K)|$ denotes the cardinality of $C_n(K)$.
\end{proposition}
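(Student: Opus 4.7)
The plan is to mimic Kauffman's classical state-sum argument for the span bound, carried out for the bracket restricted to the smoothings over $C_n(K)$. Writing
$$\langle K\rangle_n = \sum_S A^{\#_0(S)-\#_1(S)}(-A^2-A^{-2})^{|S|-1},$$
with $S$ running over the $2^{|C_n(K)|}$ states of the partial smoothing, my target is to bound $\mathrm{span}_A\langle K\rangle_n$ by $4|C_n(K)|$. Since $V_K^n(t)$ differs from $\langle K\rangle_n$ only by the monomial $(-A^3)^{-w(K)}$, and since $t=A^{-4}$, such a bound immediately yields $\mathrm{span}_t V_K^n(t)\leq |C_n(K)|$.

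First I would estimate the extreme $A$-degrees by the standard argument. Let $|S|_A$ and $|S|_B$ denote the numbers of components in the all-$A$ and all-$B$ partial-smoothing states. Any state $S$ reached from the all-$A$ state by $k=\#_1(S)$ flips satisfies $|S|\leq |S|_A+k$, since each single flip changes the loop count by $\pm 1$. Hence the highest $A$-power appearing in that term is at most
$$(|C_n(K)|-2k)+2(|S|_A+k-1) = |C_n(K)|+2|S|_A-2,$$
and symmetrically the lowest $A$-power appearing over all states is at least $-|C_n(K)|-2|S|_B+2$. This gives the preliminary bound
$$\mathrm{span}_A\langle K\rangle_n \;\leq\; 2|C_n(K)|+2(|S|_A+|S|_B)-4.$$

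The remaining step, and what I expect to be the main obstacle, is the combinatorial inequality $|S|_A+|S|_B\leq |C_n(K)|+2$. To handle it I would reduce to the setting of an ordinary virtual Kauffman bracket. Form a new virtual diagram $K'$ from $K$ by replacing each real crossing not in $C_n(K)$ with a virtual crossing; for the purpose of counting components of a smoothed state, unsmoothed real crossings and virtual crossings behave identically (both are simply transversal pass-throughs), so the partial state sum for $K$ agrees term by term with the usual Kauffman state sum for $K'$, and in particular the loop counts $|S|_A$ and $|S|_B$ are the same for the two setups. The diagram $K'$ has exactly $|C_n(K)|$ real crossings, and the desired inequality is the span inequality for its Kauffman bracket.

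The difficulty is that the inequality $|S|_A+|S|_B\leq c+2$ in the classical case is proved by a checkerboard argument on the regions of the shadow in $S^2$, which does not transfer verbatim to virtual diagrams. I would handle this by lifting $K'$ to its minimal supporting surface $\Sigma_g$ and running the checkerboard/Euler argument there; this is essentially the ingredient behind N.~Kamada's computation of $\mathrm{span}\,V_K(t)=c(K)-g(K)$ for alternating virtual knots cited in Section~1, and the non-alternating inequality $|S|_A+|S|_B\leq c_r(K')+2$ suffices for us. Assembling the pieces yields $\mathrm{span}_A\langle K\rangle_n\leq 4|C_n(K)|$, and dividing by $4$ gives $\mathrm{span}_t V_K^n(t)\leq |C_n(K)|$, as claimed.
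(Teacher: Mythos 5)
Your bookkeeping of the extreme degrees and your reduction---replacing every real crossing outside $C_n(K)$ by a virtual crossing, which changes $V_K^n(t)$ only by the writhe monomial and hence not its span---are exactly the paper's first two steps, and they correctly reduce everything to the inequality $|S_A|+|S_B|\le m+2$ for a connected virtual diagram all of whose $m$ real crossings get smoothed. The divergence, and the one genuine weak point, is how you justify that inequality. As described, your last step does not go through: the diagram $K'$ need not embed in the minimal supporting surface of the virtual knot it represents (realizing the minimal genus generally requires changing the diagram, which changes $|S_A|$ and $|S_B|$); the surface canonically attached to $K'$ is the one obtained by adding a handle at each virtual crossing, whose genus can be anything. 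More importantly, the checkerboard/Euler argument on a supporting surface identifies state circles with faces only for proper alternating diagrams---that is exactly where N.~Kamada's alternating hypothesis enters---so it yields her equality $\mathrm{span}\,V_K(t)=c(K)-g(K)$ in that setting, but it does not by itself give the non-alternating inequality you invoke.

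The inequality is nevertheless true, and the paper proves it by the same induction as Kauffman's Dual State Lemma, which transfers verbatim to virtual diagrams because it is purely local: $0$-smooth one crossing of $C_n(K)$ to get a diagram $K''$ with $m-1$ real crossings; then $|S_0(K)|=|S_0(K'')|$ while $|S_1(K)|$ and $|S_1(K'')|$ differ by at most one, so the bound $m+1$ for $K''$ gives $m+2$ for $K$. If you prefer to keep an Euler-characteristic argument, the correct surface is not a supporting surface of the knot but the one built from the states themselves: take one saddle square per real crossing and one band per edge of the underlying $4$-valent graph, so that the boundary of this ribbon surface is exactly $S_A\sqcup S_B$, and cap all these circles with disks; the resulting closed connected surface has $\chi=m-2m+|S_A|+|S_B|=|S_A|+|S_B|-m\le 2$, which is precisely the needed inequality (virtual crossings are invisible to this construction). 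With either repair your argument closes the gap and then coincides in substance with the paper's proof.
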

\begin{proof}
Assume $|C_n(K)|=m$. Let $S_0(K) (S_1(K))$ be the state obtained from $K$ by taking $0 (1)$-smoothing on all the crossing points in $C_n(K)$. According to the definition of $V_K^n(t)$, the potential highest degree and lowest degree of $V_K^n(t)$ are equal to the highest degree and the lowest degree provided by $S_0(K)$ and $S_1(K)$ respectively. It is easy to conclude that span$V_K^n(t)\leq \frac{1}{4}(2m+2|S_0(K)|+2|S_1(K)|-4)$. We claim that $|S_0(K)|+|S_1(K)|\leq m+2$, then result follows directly.

First notice that we can assume $|C(K)|=|C_n(K)|$, i.e. all real crossing points of $K$ belong to the set $C_n(K)$. If not, we can replace all other real crossing points with virtual crossing points. Although $V_K^n(t)$ is not preserved (since the writhe of the diagram may be changed), the span of $V_K^n(t)$ is invariant. Therefore from now on we assume that $|C(K)|=|C_n(K)|=m$, i.e. every real crossing point will be smoothed. The following proof is similar to the proof of the Dual State Lemma in \cite{Kau1987}. For the completeness, we still sketch it here.

If the number of the real crossing points in $K$ equals 0 or 1, it is easy to get the desired result. Suppose that for all virtual knot diagrams with $m-1$ real crossing points the result is true, it suffices to prove that the result is also true for any virtual knot diagram $K$ which contains $m$ real crossing points. Choose a real crossing point of $K$, say $c$, without loss of generality we assume that taking 0-smoothing at $c$ yields a new virtual knot diagram $K'$. Due to the hypothesis, now we have $|S_0(K')|+|S_1(K')|\leq m+1$. Note that $|S_0(K)|=|S_0(K')|$ and $||S_1(K)|-|S_1(K')||\leq1$, it follows that $|S_0(K)|+|S_1(K)|\leq m+2$.
\end{proof}

In virtual knot theory, the \emph{real crossing number} $c_r(K)$ is the minimal number of the real crossing points among all virtual knot diagrams of $K$. When $K$ is a classical knot, this coincides with the crossing number of $K$ \cite{Man2013}. A natural refinement of $c_r(K)$ is the minimal number of crossing points with index $n$ among all knot diagrams. We use $c_r^n(K)$ to denote it. As we mentioned in Section 3, if $n\neq 0$ the absolute value of the coefficient of $t^n$ in the writhe polynomial provides a lower bound for $c_r^n(K)$. However the writhe polynomial can not tell us anything about $c_r^0(K)$. Now Proposition 4.2 tells us it is possible to use $V_K^0(t)$ to give a lower bound for $c_r^0(K)$. Here is an example.
\begin{example}
Let us consider the virtual knot $K$ described in Figure \ref{figure10}. The four real crossing points $a, b, c, d$ have indices $0, 1, 0, -1$ respectively. The writhe polynomial of it equals $t^1+t^{-1}$, which implies $c_r^1(K)=c_r^{-1}(K)=1$. On the other hand, one computes $V_K^0(t)=-t^{4}+t^{3}+t^{\frac{5}{2}}$, hence we have span$V_K^0(t)=\frac{3}{2}$. Therefore we conclude that $c_r^0(K)=2$.
\begin{figure}[h]
\centering
\includegraphics{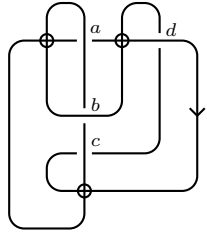}\\
\caption{A virtual knot with $c_r^0=2, c_r^1=c_r^{-1}=1$}\label{figure10}
\end{figure}
\end{example}

\subsection{Some generalizations of the indexed Jones polynomial}
Recall our definition of the indexed Jones polynomial, for each state we only extract the information of the number of components. In classical case, each state is a disjoint union of circles. The only useful information is the number of these circles. But in our case, each state is a virtual link diagram. A natural idea is to extract more information from this virtual link diagram rather than just counting the number of components.

Recall that given a virtual knot diagram $K$, the corresponding flat virtual knot $F(K)$ is obtained from $K$ by replacing all real crossing points with flat crossing points. Now we define a graphical modification of the indexed Jones polynomial as follows
\begin{center}
$\mathfrak{V}_K^n(t)=(-A^3)^{-w(K)}\sum\limits_SA^{\#_0-\#_1}F(S)|_{A=t^{-\frac{1}{4}}}$.
\end{center}
Let $FVL$ be the free abelian group generated by all flat virtual links. Then $\mathfrak{V}_K^n(t)$ takes values in $FVL[t^{\frac{1}{2}}, t^{-\frac{1}{2}}]$, i.e. each $\mathfrak{V}_K^n(t)$ is a Laurent polynomial with coefficients in $FVL$. Note that $F(K\cup O)=(-A^2-A^{-2})F(K)$, where $O$ denotes a circle which is disjoint from $K$.
\begin{theorem}
$\mathfrak{V}_K^n(t)$ is a virtual knot invariant.
\end{theorem}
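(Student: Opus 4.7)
The plan is to mirror the four-case analysis used in the proof of Theorem 4.1, but to upgrade each verification from "the number of components of the state is preserved" to "the flat virtual link type of the state is preserved." Since the writhe correction $(-A^3)^{-w(K)}$ handles $\Omega_1$ just as in the classical Kauffman bracket argument, and since $\Omega_1', \Omega_2', \Omega_3'$ involve no real crossings (so leave each state $S$ and hence each $F(S)$ literally unchanged), I only have to check the four moves $\Omega_1, \Omega_2, \Omega_3, \Omega_3^v$.

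For $\Omega_1$, the crossing $c$ is isolated and hence has $\text{Ind}(c)=0$, so $c\in C_n(K)$ for every $n$ and must be smoothed. One of the two resolutions reproduces the diagram without the curl, while the other produces the same diagram with a disjoint trivial circle adjoined; by the identity $F(K\cup O)=(-A^2-A^{-2})F(K)$ asserted just before the theorem, the two resolutions combine to give $(-A)^{\pm 3}\,\mathfrak{V}$ applied to the simplified diagram, which is precisely compensated by the change in $(-A^3)^{-w(K)}$. For $\Omega_2$, Proposition 2.3 tells us both crossings share the same index; either neither is in $C_n$, in which case the state before and after $\Omega_2$ differ by a planar $\Omega_2$ on flat virtual diagrams and so $F(S)$ is unchanged in $FVL$, or both are in $C_n$, in which case the standard four-state cancellation (two resolutions yield the state without the $\Omega_2$, two yield the same with a disjoint circle) reduces via $F(K\cup O)=(-A^2-A^{-2})F(K)$ to the bracket identity that kills $\Omega_2$.

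For $\Omega_3$, Proposition 2.3 guarantees that the three crossings retain their indices, and the observation recalled in the proof of Theorem 4.1 (that one index is the sum of the other two) implies that either none of them lie in $C_n$, exactly one does, or all three do. In the "none" and "exactly one" cases the complete state $S$ transforms by a flat $\Omega_3$ or a flat $\Omega_2$ together with a planar isotopy on flat virtual diagrams, so $F(S)$ is unchanged in $FVL$. In the "all three" case, the eight states on each side pair up by the classical bracket calculation of $\Omega_3$-invariance, now performed at the level of $F(S)$ rather than $|S|$; each corresponding pair of states on the two sides differs by a flat $\Omega_3$ move (or a sequence of flat $\Omega_2$'s and a planar isotopy), so they represent the same element of $FVL$. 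For $\Omega_3^v$ (the semi-virtual / detour move), only the real crossings can be smoothed, and in every subcase the induced change at the level of $S$ is a detour of a strand through virtual crossings, which is a flat virtual equivalence and hence invisible to $F(S)$.

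The main obstacle is the "exactly one smoothed" subcase of $\Omega_3$: here one has to verify by hand that the flat virtual link obtained from smoothing a single chosen crossing on the left of $\Omega_3$ is related to that on the right by a flat Reidemeister move (or combination thereof), not just that they have the same number of components. I would carry this out by drawing the three possibilities for which crossing is smoothed and exhibiting the explicit flat Reidemeister sequence in each case; the key point is that the smoothed strand can be pushed across the remaining two crossings by a flat $\Omega_2$ together with a virtual detour, precisely because the other two crossings both remain real (hence genuine flat crossings in the state) and the local picture is planar. Once this geometric lemma is in hand, the rest of the proof is a direct transcription of the proof of Theorem 4.1 with "number of components" replaced by "class in $FVL$."
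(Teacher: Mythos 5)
Your proposal follows the paper's route exactly: the paper's proof of this theorem merely says it is ``almost the same'' as that of Theorem 4.1, adding only that the map $F$ is needed in the $\Omega_3$ case where a single crossing is smoothed, which is precisely the subcase you isolate and settle via flat $\Omega_2$ moves and planar isotopy (valid for flat diagrams even though the over/under pattern at the two surviving crossings can obstruct a genuine virtual $\Omega_2$). Apart from some harmless looseness in recalling the standard Kauffman bracket state counts for $\Omega_2$ and for the all-three-smoothed $\Omega_3$ case, your argument is the paper's argument written out in detail.
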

\begin{proof}
The proof is almost the same with that of Theorem 4.1. We only mention that the map $F$ will be used in the case when only one crossing point in $\Omega_3$ is smoothed.
\end{proof}
\begin{remark}
This graphical modification of the indexed Jones polynomial is motivated by the parity bracket polynomial proposed by Manturov in \cite{Man2010}. Actually, $\mathfrak{V}_K^2(t)$ is exactly Manturov's parity bracket polynomial. The parity arrow polynomial was defined by Kaestner and Kauffman in \cite{Kae2012}, which combines the idea of parity and arrow polynomial. We remark that with the help of the chord index, one can similarly define the indexed Miyazawa/arrow polynomial, which also provides a generalization of the indexed Jones polynomial.
\end{remark}

\section{Indexed quandle and its applications}
\subsection{A quick review of the quandle structure}
To set the stage, we recall some basic notions of quandle and its generalizations.
\begin{definition}
A \emph{quandle} $Q$ is a set with a binary operation $\ast: Q\times Q\rightarrow Q$, which satisfy
\begin{enumerate}
  \item $\forall a\in Q$, $a\ast a=a$;
  \item $\forall b, c\in Q$, there exists a unique $a\in Q$ such that $a\ast b=c$;
  \item $\forall a, b, c\in Q$, $(a\ast b)\ast c=(a\ast c)\ast(b\ast c)$.
\end{enumerate}
\end{definition}

Here we list some examples of quandles.
\begin{itemize}
  \item For any nonempty set $Q$ one can define the \emph{trivial quandle} by setting $a\ast a=a$ for any $a\in Q$;
  \item A conjugacy class of a group with quandle operation $a\ast b=b^{-1}ab$ is called a \emph{conjugation quandle};
  \item Consider the unit sphere $S^n$ in $\mathds{R}^{n+1}$, the operation $a\ast b=2(a\cdot b)b-a$ provides a quandle structure on $S^n$. Here $\cdot$ denotes the inner product of $\mathds{R}^{n+1}$.
\end{itemize}

The notion of quandle was defined by Joyce in \cite{Joy1982} (and independently by Matveev in \cite{Mat1984} with the name \emph{distributive groupoid}). Given a classical knot diagram $K$, the \emph{knot quandle} (also called \emph{fundamental quandle}) $Q(K)$ is generated by the arcs of $K$ and subject to the relations indicated in Figure \ref{figure11}. It is well known that the knot quandle distinguishes knots up to mirrors and reverses. However similar to the knot group, in general the knot quandle is difficult to deal with. In practise, a more easily computable invariant is the number of quandle homomorphisms from $Q(K)$ to a fixed finite quandle $Q$. We call it the \emph{coloring invariant} and use $Col_Q(K)$ to denote it. From the viewpoint of knot diagram, a coloring assigns to each arc of the knot diagram an element of $Q$ such that at each crossing the coloring rule indicated in Figure \ref{figure11} is satisfied.
\begin{figure}[h]
\centering
\includegraphics{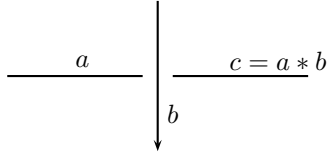}\\
\caption{The coloring rule at each crossing}\label{figure11}
\end{figure}

The knot quandle can be similarly defined for virtual knots, which is also an invariant. However the knot quandle is not good at distinguishing virtual knots. For example, the knot quandle of the virtual trefoil described in Figure \ref{figure2} is trivial. For this reason, R. Fenn, M. Jordan-Santana and Louis H. Kauffman \cite{Fen2004} introduced a more general algebraic structure, say the biquandle. Note that the definition used here was suggested in \cite{Kae2015}, which is a bit different from the original definition introduced in \cite{Fen2004}. But it is easy to see that they are essentially the same.
\begin{definition}
A \emph{biquandle} $BQ$ is a set with two binary operations $\ast, \circ: BQ\times BQ\rightarrow BQ$ such that the following axioms are satisfied
\begin{enumerate}
  \item $\forall x\in BQ, x\ast x=x\circ x$;
  \item $\forall x, y\in BQ$, there are unique $z, w\in BQ$ such that $z\ast x=y$ and $w\circ x=y$, and the map $S: (x, y)\rightarrow(y\circ x, x\ast y)$ is invertible;
  \item $\forall x, y, z\in BQ$, we have
  \begin{center}
  $(z\circ y)\circ(x\ast y)=(z\circ x)\circ(y\circ x)$,\\$(y\circ x)\ast(z\circ x)=(y\ast z)\circ(x\ast z)$,\\$(x\ast y)\ast(z\circ y)=(x\ast z)\ast(y\ast z)$.
  \end{center}
\end{enumerate}
\end{definition}

The following is a simple observation.
\begin{lemma}
Let $(BQ, \ast, \circ)$ be a biquandle and $x, y$ are two elements of $BQ$, if $x\ast y=y\circ x$ then $x=y$.
\end{lemma}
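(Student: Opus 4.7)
The plan is to exploit the uniqueness clause in axiom (2), which says that for fixed $y$ and fixed target $v$ there is only one $z$ satisfying $z \ast y = v$. Thus, to conclude $x = y$, it is enough to manufacture the identity $x \ast y = y \ast y$. Since by axiom (1) we have $y \ast y = y \circ y$, and we are already given $x \ast y = v := y \circ x$, the whole problem reduces to showing $y \circ y = v$.

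To produce that equation, I would set $v := x \ast y = y \circ x$ and substitute $z = y$ into the first exchange law of axiom (3):
\begin{equation*}
(z \circ y) \circ (x \ast y) \;=\; (z \circ x) \circ (y \circ x).
\end{equation*}
Plugging in $z = y$ collapses the right side to $(y \circ x) \circ (y \circ x) = v \circ v$, and the left side becomes $(y \circ y) \circ v$. Hence $(y \circ y) \circ v = v \circ v$. Since $v \circ v = v \circ v$ trivially exhibits $v$ as a solution of $w \circ v = v \circ v$, the uniqueness of such $w$ (axiom (2)) forces $y \circ y = v$. Combined with axiom (1) this gives $y \ast y = v = x \ast y$, and one more application of the uniqueness of $z$ with $z \ast y = v$ yields $x = y$.

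The only potential obstacle is choosing the right exchange law and the right substitution; the second exchange law $(y \circ x) \ast (z \circ x) = (y \ast z) \circ (x \ast z)$ with $z = y$ works equally well, giving $v \ast v = (y \ast y) \circ v$ and, since $v \ast v = v \circ v$ by axiom (1), the same uniqueness argument yields $y \ast y = v$ directly. Either route reduces the lemma to two applications of axiom (2)'s uniqueness clause sandwiching a single exchange relation, so no nontrivial computation is needed beyond that substitution.
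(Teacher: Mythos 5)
Your proposal is correct and follows essentially the same route as the paper: substitute $z=y$ into the first exchange law of axiom (3), use the given hypothesis to rewrite $x\ast y$ as $y\circ x$, cancel via the uniqueness clause of axiom (2) to get $y\circ y=x\ast y$, invoke axiom (1), and cancel once more to conclude $x=y$. The alternative via the second exchange law is a minor variation but the argument is the same in substance.
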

\begin{proof}
Taking $z=y$, now the first condition in the third axiom of biquandle becomes
\begin{center}
$(y\circ y)\circ(x\ast y)=(y\circ x)\circ(y\circ x)$.
\end{center}
Since $x\ast y=y\circ x$, it follows that
\begin{center}
$(y\circ y)\circ(y\circ x)=(y\circ x)\circ(y\circ x)$.
\end{center}
Recall that $-\circ(y\circ x)$ is invertible, hence we obtain $y\circ y=y\circ x=x\ast y$. Together with the first axiom $y\circ y=y\ast y$, we have $y\ast y=x\ast y$. The desired result follows directly because $-\ast y$ is invertible.
\end{proof}

Later the biquandle structure was extended by Kauffman and Manturov to the virtual biquandle \cite{Kau2005} by adding a relation at each virtual crossing point. The readers are referred to \cite{Car2012} for more details of quandle ideas. We will come back to the biquandle structure in Section 6.

\subsection{Indexed quandle}
In this subsection we plan to give a generalized quandle structure by using the chord index. The main idea is similar to the parity biquandle introduced in \cite{Kae2014} and studied in \cite{Kae2015}, where the parity was used to generalize the quandle structure. What we want to do is replacing the parity with chord index. We note that although we only discuss the indexed quandle in this paper, the technique used here can be similarly used to define an indexed biquandle.
\begin{definition}
An \emph{indexed quandle} is a set $IndQ$ with a sequence of binary operations $\ast_i: IndQ\times IndQ\rightarrow IndQ$ $(i\in \mathds{Z})$ which satisfy the following axioms:
\begin{enumerate}
  \item $\forall a\in IndQ, a\ast_0 a=a$;
  \item $\forall b, c\in IndQ$ and $i\in \mathds{Z}$, there is a unique $a\in IndQ$ such that $a\ast_i b=c$;
  \item $\forall a, b, c\in IndQ$ and $i, j\in\mathds{Z}, (a\ast_i b)\ast_j c=(a\ast_j c)\ast_i(b\ast_{j-i}c)$.
\end{enumerate}
\end{definition}

We remark that each indexed quandle $(IndQ, \ast_i)$ includes a quandle $(IndQ, \ast_0)$. The followings are some examples of the indexed quandles.
\begin{itemize}
  \item Let $(Q, \ast)$ be a quandle, then $Q$ can be thought of as an indexed quandle if we define $\ast_i=\ast$ for any $i\in \mathds{Z}$.
  \item Let $(Q, \ast)$ be a quandle, another way to regard $Q$ as an indexed quandle is introducing $\ast_0=\ast$ and $a\ast_{i(\neq0)} b=a$ for any $a, b\in Q$.
  \item Let $X=\mathds{Z}[t, t^{-1}]$, one defines $a\ast_i b=ta+(1-t)b+i$ for any $a, b\in\mathds{Z}[t, t^{-1}]$.
  \item Let $G$ be a group, for any $\phi\in Aut(G)$ and $z\in Z(G)$ (the center of $G$), $G$ can be regarded as an indexed quandle with operations $a\ast_i b=\phi(ab^{-1})bz^i$.
\end{itemize}

For a given virtual knot diagram $K$, we define the \emph{indexed knot quandle} $IndQ_K$ to be the indexed quandle generated by the arcs of $K$, and each real crossing point gives a relation which depends on the index, see Figure \ref{figure12}.
\begin{figure}[h]
\centering
\includegraphics{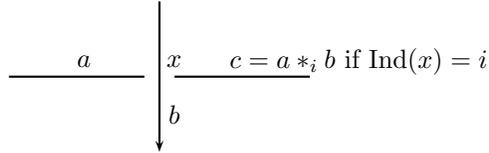}\\
\caption{The indexed coloring rule at each crossing}\label{figure12}
\end{figure}

Notice that when $K$ is a classical knot diagram, then the index of each crossing is zero, therefore in this case the indexed knot quandle reduces to the classical knot quandle.

\begin{theorem}
The indexed knot quandle is preserved under the generalized Reidemeister moves, hence it is a virtual knot invariant.
\end{theorem}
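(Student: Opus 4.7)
The plan is to verify invariance under each of the seven generalized Reidemeister moves (three classical $\Omega_1,\Omega_2,\Omega_3$, three purely virtual $\Omega_1',\Omega_2',\Omega_3'$, and the mixed $\Omega_3^v$). The purely virtual moves and $\Omega_3^v$ are immediate, since virtual crossings contribute no generators or relations to $IndQ_K$; one only has to check that the arcs appearing on the two sides of these moves are identified in the natural way, yielding the same presentation. So the genuine work is entirely in the three classical moves, and each is matched with one of the three axioms of an indexed quandle, guided by the corresponding clause of Proposition~2.3.

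For $\Omega_1$ I would argue as follows. The single crossing created (or removed) is isolated, so by Proposition~2.3(2) its chord index is $0$. The relation imposed by the indexed coloring rule of Figure~\ref{figure12} at an index-$0$ crossing on a strand colored $a$ with itself is $a\ast_0 a=a$, which is exactly axiom (1). Hence removing or inserting this relation is a Tietze transformation, so $IndQ_K$ is unchanged. For $\Omega_2$, Proposition~2.3(3) guarantees that the two new crossings share a common index, say $i$, and by orientation one of them contributes the relation $a\ast_i b=c$ while the other contributes the relation whose solvability is exactly axiom (2): given $b$ and $c$, there is a unique $a$ with $a\ast_i b=c$. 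Thus the two relations together are redundant with the identification of the outgoing arcs with the incoming ones, so $\Omega_2$ preserves the presentation.

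The main obstacle, as expected, is $\Omega_3$. By Proposition~2.3(4) the three crossings preserve their individual indices under the move, so the issue is purely algebraic: one must show that the two presentations (before and after the move) obtained from three crossings with indices $i$, $j$, and a third index $k$ determined by the first two yield the same quotient. The key input is the observation already used in the proof of Theorem~4.1, namely that on both sides of $\Omega_3$ the indices of the three crossings satisfy a linear relation in which one index equals the signed sum of the other two. Tracing the arc labels through the three crossings on the left of $\Omega_3$ gives an expression of the form $(a\ast_i b)\ast_j c$ for the bottom-right outgoing arc, while tracing through the right-hand side of $\Omega_3$ gives $(a\ast_j c)\ast_i(b\ast_{j-i}c)$; here the subscript $j-i$ is precisely the index of the middle crossing on the right-hand side of the move, as predicted by the index sum rule. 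Axiom (3) of Definition~5.3 is exactly the equality of these two expressions, so the two presentations define isomorphic indexed quandles.

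The hard part will therefore be to set up the bookkeeping for $\Omega_3$ correctly: one must carefully label the six arcs entering and leaving the triangle, identify which crossing contributes which $\ast_i$, and check that the chord-index sum relation indeed translates into the index shift $j-i$ appearing in axiom (3) rather than, say, $i-j$ or $i+j$. Once the orientations and over/under data are fixed, the verification becomes a direct comparison of the two relations with axiom (3), and combined with the $\Omega_1$ and $\Omega_2$ cases above this completes the proof of invariance.
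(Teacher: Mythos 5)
Your proposal is correct and follows essentially the same route as the paper: reduce to the three classical moves (the virtual and mixed moves being immediate), handle $\Omega_1$ and $\Omega_2$ via Proposition~2.3 and the first two indexed-quandle axioms, and for $\Omega_3$ use the preservation of the three indices together with the linear relation among them (one index equals the signed sum of the other two) so that the arc-tracing comparison $(a\ast_i b)\ast_j c=(a\ast_j c)\ast_i(b\ast_{j-i}c)$ is exactly axiom (3). The bookkeeping you defer is precisely what the paper records in Figure~\ref{figure13}, so there is no substantive difference.
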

\begin{proof}
According to the definition of the indexed knot quandle, it is sufficient to verify its invariance under $\Omega_1, \Omega_2, \Omega_3$. Recall that the crossing involved in $\Omega_1$ has index zero and the two crossing points involved in $\Omega_2$ have the same index, it is easy to conclude the invariance of $IndQ_K$ under $\Omega_1, \Omega_2$ from the first and second axiom respectively.

For the third Reidemeister move $\Omega_3$, note that the corresponding crossing points on the two sides of $\Omega_3$ have the same index. We assume Ind$(x)$=Ind$(x')=i$, Ind$(y)$=Ind$(y')=j$ and Ind$(z)$=Ind$(z')=k$. Similar to the proof of Lemma 4.1 in \cite{Che2016}, it is not difficult to show that $i=j+k$. Together with the third axiom, the invariance of $IndQ_K$ can be obtained from Figure \ref{figure13} directly.
\end{proof}
\begin{figure}[h]
\centering
\includegraphics{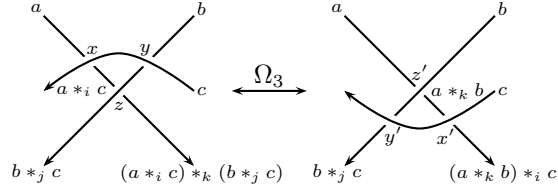}\\
\caption{The invariance of $IndQ_K$ under $\Omega_3$}\label{figure13}
\end{figure}

\begin{corollary}
Given a finite indexed quandle $(Q, \ast_i)$, $Col_{(Q, \ast_i)}(K)=|Hom(IndQ_K, (Q, \ast_i))|$ is a virtual knot invariant.
\end{corollary}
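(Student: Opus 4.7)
The plan is to reduce this corollary to Theorem 5.2 via a standard categorical observation: the functor $\mathrm{Hom}(-,(Q,\ast_i))$ sends isomorphisms of indexed quandles to bijections of sets, so isomorphism-invariant quantities of $IndQ_K$ are automatically virtual knot invariants.

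Concretely, first I would note that if $K$ and $K'$ are equivalent virtual knot diagrams, then by Theorem 5.2 there is an isomorphism of indexed quandles $\phi\colon IndQ_K \to IndQ_{K'}$. Precomposition with $\phi$ defines a map $\phi^{\ast}\colon \mathrm{Hom}(IndQ_{K'},(Q,\ast_i))\to \mathrm{Hom}(IndQ_K,(Q,\ast_i))$, sending $f$ to $f\circ\phi$; precomposition with $\phi^{-1}$ provides a two-sided inverse. Therefore $\phi^{\ast}$ is a bijection and the two hom-sets have the same cardinality, which proves $Col_{(Q,\ast_i)}(K)=Col_{(Q,\ast_i)}(K')$.

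Second, I would verify that $Col_{(Q,\ast_i)}(K)$ is a finite number so that the statement is meaningful. Since $K$ has only finitely many arcs, $IndQ_K$ is a finitely generated indexed quandle; any homomorphism to $(Q,\ast_i)$ is determined by its values on the generators, of which there are at most $|Q|^{n}$ choices, where $n$ is the number of arcs. (Only those assignments that satisfy the indexed relation of Figure \ref{figure12} at every real crossing give a bona fide homomorphism, which only decreases the count.) This also gives the diagrammatic interpretation of $Col_{(Q,\ast_i)}(K)$ that will be used in examples later: it counts the number of colorings of the arcs of $K$ by elements of $Q$ satisfying $c=a\ast_i b$ at each real crossing of index $i$.

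There is really no substantive obstacle here; all the work was already done in proving Theorem 5.2. The only thing to keep in mind is simply to invoke the isomorphism statement of Theorem 5.2 (rather than a mere equality of presentations), since different diagrams of the same virtual knot typically produce distinct but isomorphic presentations of the indexed knot quandle.
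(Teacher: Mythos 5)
Your proposal is correct and follows exactly the route the paper intends: the corollary is stated without proof as an immediate consequence of Theorem 5.7, and your argument (precomposition with the isomorphism $IndQ_K \cong IndQ_{K'}$ induces a bijection of hom-sets, plus the finiteness remark) is the standard justification the paper leaves implicit. Nothing further is needed.
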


Notice that if $K$ is a classical knot, then for any indexed quandle $(Q, \ast_i)$ we have
\begin{center}
$Col_{(Q, \ast_i)}(K)=|Hom(IndQ_K, (Q, \ast_i))|=|Hom(Q_K, (Q, \ast_0))|=Col_{(Q, \ast_0)}(K)$.
\end{center}
Therefore for classical knots, there always exist some trivial colorings. However this is not the case for virtual knots.

\begin{example}
Consider the virtualization of the classical trefoil knot in Figure \ref{figure14}. There are three real crossing points $\{a, b, c\}$ with Ind$(a)=-2$, Ind$(b)=2$ and Ind$(c)=0$. If we use the dihedral quandle $D_3=\{0, 1, 2\}$ with operation $i\ast j=2j-i$ (mod 3), then this virtualized trefoil knot has the same coloring invariant with the classical trefoil knot. If we use the indexed quandle $IndD_3=\{0, 1, 2\}$ with operations $i\ast_k j=2j-i+k$ (mod 3), as we discussed above the classical trefoil knot still has 9 different colorings. However by solving the following equations in $\mathds{Z}_3$
\begin{equation*}
\left\{
\begin{array}{c}
2y-x+2=z\\
2z-x+0=y\\
2x-z-2=y\\
\end{array}
\right.
\end{equation*}
one finds that there is no solution, it means that the coloring invariant of the virtualized trefoil knot is 0.
\end{example}
\begin{figure}[h]
\centering
\includegraphics{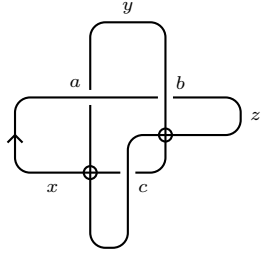}\\
\caption{The virtualization of the trefoil knot}\label{figure14}
\end{figure}

\subsection{An enhancement of the coloring invariant via indexed quandle 2-cocycles}
The coloring invariant $Col_Q(K)$ has many applications in knot theory. For example, $Col_Q(K)$ can be used to provide lower bounds for braid index, tunnel number and unknotting number \cite{Cla2014}. However, there also exists some disadvantages to $Col_Q(K)$. For instance, for any finite quandle $Q$, $Col_Q(K)$ can not distinguish the left-hand trefoil from the right-hand trefoil, since the trefoil knot is invertible. In \cite{Car2003}, J. S. Carter, D. Jelsovsky, S. Kamada, L. Langford and M. Saito introduced the quandle (co)homology theory. In particular, with a given quandle 2(3)-cocycle one can define an enhancement of the (shadow) coloring invariant, say the quandle cocycle invariant. There are many examples which show that these cocycle invariants are more powerful than the original coloring invariants. As an example, for some suitably chosen quandle and quandle 3-cocycle the cocycle invariant takes different values on the left-hand trefoil and the right-hand trefoil \cite{Rou2000}. Following the main idea of the quandle cocycle invariant we want to define the notion of indexed quandle 2-cocycle in this subsection. Analogous to the quandle 2-cocycle, each indexed quandle 2-cocycle also can be used to define a generalized invariant of $Col_{(Q, \ast_i)}(K)$.
\begin{definition}
Let $(Q, \ast_i)$ be an indexed quandle and $A$ an abelian group, we call a map $\phi: Q\times Q\rightarrow A$ an \emph{indexed quandle 2-cocycle} if for any $a, b, c\in Q$ and $i, j\in \mathds{Z}$ it satisfies
\begin{center}
$\phi(a, c)^{-1}\phi(a\ast_{i-j}b, c)\phi(a, b)\phi(a\ast_i c, b\ast_j c)^{-1}=1$ and $\phi(a, a)=1$.
\end{center}
\end{definition}

For a given virtual knot diagram $K$ and a finite indexed quandle $(Q, \ast_i)$, we choose a coloring $f: IndQ_K\rightarrow (Q, \ast_i)$. For a crossing $x$ with index $i$, if the three arcs around $x$ have colors $a, b$ and $a\ast_i b$ (see Figure \ref{figure12}) then we associate a weight $B(x, f)=\phi(a, b)^{w(x)}$ to the crossing point $x$. As before here $w(x)$ denotes the writhe of $x$. Now we define the \emph{indexed quandle 2-cocycle invariant} (associated with $\phi$) to be
\begin{center}
$\Phi_{\phi}(K)=\sum\limits_f\prod\limits_xB(x, f)$,
\end{center}
where $f$ runs over all homomorphisms from $IndQ_K$ to $(Q, \ast_i)$, and $x$ runs over all real crossing points of $K$. Obviously if $\phi$ sends each element of $Q\times Q$ to the identity element, then $\Phi_{\phi}(K)$ reduces to the coloring invariant $Col_{(Q, \ast_i)}(K)$.

\begin{theorem}
$\Phi_{\phi}(K)$ is a virtual knot invariant.
\end{theorem}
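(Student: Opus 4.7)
The plan is to imitate the classical proof that the quandle $2$-cocycle invariant is a knot invariant, adapting each step to keep track of chord indices and using the indexed cocycle condition in place of the ordinary one. Since the generalized Reidemeister moves $\Omega_1', \Omega_2', \Omega_3'$ and the mixed move $\Omega_3^v$ do not touch any real crossing and clearly induce a bijection between colorings that preserves all weights, only $\Omega_1, \Omega_2, \Omega_3$ require attention. For each of these I will use Theorem~5.1 to get a bijection between the colorings before and after the move, and then show that the weight product $\prod_x B(x,f)$ is unchanged for each pair of corresponding colorings. Since both the set of colorings outside the move region and the weights of crossings outside the region are untouched, it suffices to check equality of the local weight product at the crossings involved in the move.

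For $\Omega_1$, the single crossing has index $0$ by Proposition~2.3, and the two strands meeting at it must receive the same color $a$ by the $a\ast_0 a=a$ axiom; the local weight is $\phi(a,a)^{\pm1}=1$ by the second clause of Definition~5.8, so the move preserves $\Phi_\phi$. For $\Omega_2$, both crossings have the same chord index $i$ and opposite writhes, while the two incoming arcs carry colors $a$ and $b$, giving local weight $\phi(a,b)^{+1}\phi(a,b)^{-1}=1$.

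The substantive case is $\Omega_3$. Label the three crossings on one side by their indices $i,j,k$; as recalled in the proof of Theorem~5.6 one of them is the sum of the other two, say $i=j+k$. Tracing the three colors $a,b,c$ on the incoming arcs through Figure~\ref{figure13} on both sides of $\Omega_3$ and using the third axiom of Definition~5.7 to confirm the outgoing arcs are consistently colored, the product of local weights on one side becomes $\phi(a,b)\,\phi(a\ast_{i-j}b,\,c)$ while the product on the other side becomes $\phi(a,c)\,\phi(a\ast_{i}c,\,b\ast_{j}c)$ (with the indices assigned to the three crossings matching those used in Definition~5.8). The cocycle identity $\phi(a,c)^{-1}\phi(a\ast_{i-j}b,c)\phi(a,b)\phi(a\ast_i c,b\ast_j c)^{-1}=1$ is precisely the statement that these two products are equal, so $\Omega_3$ preserves the local weight product. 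Both orientations and all sign patterns for $\Omega_3$ reduce to this identity (or its inverse) after applying the Reidemeister-3 variations, which are standard consequences once one version is established.

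The main obstacle I anticipate is the bookkeeping in the $\Omega_3$ case: correctly matching up the three chord indices $i,j,k$ of the crossings in Figure~\ref{figure13} to the indices $i,j$ appearing in the cocycle equation, checking that the identity $i=j+k$ (rather than some permutation) puts the expression in exactly the form of Definition~5.8, and verifying that the other Reidemeister~3 variants (differing by signs and orientations) can all be reduced to this single identity. Once this is done, the invariance proof is entirely routine, paralleling the classical quandle cocycle case.
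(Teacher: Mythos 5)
Your proposal is correct and follows essentially the same route as the paper: $\Omega_1$ is handled via the index-zero crossing and $\phi(a,a)=1$, $\Omega_2$ via the equal index and opposite writhes giving $\phi(a,b)^{+1}\phi(a,b)^{-1}=1$, and $\Omega_3$ by matching the local weight products (after cancelling the common $\phi(b,c)$ factor) with the indexed 2-cocycle identity, using the relation that one crossing's index is the sum of the other two, exactly as the paper reads off from Figure \ref{figure13}. Your version is in fact more detailed than the paper's terse argument, and the bookkeeping you flag does work out as you describe.
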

\begin{proof}
Since the crossing point involved in $\Omega_1$ has index zero, together with $\phi(a, a)=1$ for any $a\in Q$, it follows that $\Phi_{\phi}(K)$ is invariant under $\Omega_1$. For $\Omega_2$, notice that the two crossing points in $\Omega_2$ have opposite signs, hence the contributions from these two crossing points cancel out. The invariance of $\Phi_{\phi}(K)$ under $\Omega_3$ can be read directly from Figure \ref{figure13}.
\end{proof}

We give an example to illustrate that $\Phi_{\phi}(K)$ is strictly stronger that the coloring invariant $Col_{(Q, \ast_i)}(K)$.

\begin{example}
In this example we use $K$ and $K'$ to denote the virtual trefoil knot depicted in Figure \ref{figure2} and the virtualization of the classical trefoil knot depicted in Figure \ref{figure14}. Consider a set $Q=\{0, 1\}$ with operations $a\ast_i b=a+i$ (mod 2). One easily finds that $(Q, \ast_i)$ is an indexed quandle. For the coloring invariant, we have
\begin{center}
$|Col_{(Q, \ast_i)}(K)|=|Col_{(Q, \ast_i)}(K')|=2$.
\end{center}
In order to define an indexed quandle 2-cocycle invariant we choose $A=\mathds{Z}_2$ and introduce an indexed quandle 2-cocycle $\phi$, which is defined by $\phi(0, 0)=\phi(1, 1)=0$ and $\phi(0, 1)=\phi(1, 0)=1$. Now we have
\begin{center}
$\Phi_{\phi}(K)=1+1$ but $\Phi_{\phi}(K')=0+0$.,
\end{center}
which means that $K$ and $K'$ have the same coloring invariants but different cocycle invariants.
\end{example}

We remark that the indexed quandle 2-cocycle invariant can be extended by replacing $\phi$ with a sequence of homomorphisms $\phi_i$ $(i\in \mathds{Z})$. More precisely, we use $\psi$ to denote a sequence of $\{\phi_i\}_{i\in\mathds{Z}}$ where each $\phi_i$ represents a map from $Q\times Q$ to $A$. We say $\psi$ is a \emph{generalized indexed quandle 2-cocycle} if for any $a, b, c\in Q$ and $i, j\in \mathds{Z}$ we have
\begin{center}
$\phi_i(a, c)^{-1}\phi_i(a\ast_{i-j}b, c)\phi_{i-j}(a, b)\phi_{i-j}(a\ast_i c, b\ast_j c)^{-1}=1$ and $\phi_0(a, a)=1$.
\end{center}
With a fixed generalized indexed quandle 2-cocycle $\psi=\{\phi_i\}_{i\in\mathds{Z}}$ and a coloring $f$, we associate a weight $\mathfrak{B}(x, f)=\phi_i(a, b)^{w(x)}$ to the crossing point $x$ in Figure \ref{figure12}. Now we define the \emph{generalized indexed quandle 2-cocycle invariant} as follows
\begin{center}
$\Psi_{\psi}(K)=\sum\limits_f\prod\limits_{x}\mathfrak{B}(x, f)$,
\end{center}
where the product takes over all crossing points and the sum takes over all colorings. In the same way, one can prove that $\Psi_{\psi}(K)$ is a virtual knot invariant.

We end this section with a simple example of the generalized indexed quandle 2-cocycle invariant. Consider the indexed quandle which consists of one element $a$ and choose the abelian group $A=\mathds{Z}[t, t^{-1}]$. We define a generalized indexed quandle 2-cocycle $\psi=\{\phi_i\}$ by letting $\phi_i(a, a)=t^i$ $(i\neq 0)$ and $\phi_0(a, a)=0$. Note that these is only one coloring. Now the generalized indexed quandle 2-cocycle invariant can be read as
\begin{center}
$\Psi_{\psi}(K)=\sum\limits_{\text{Ind}(x)\neq0}w(x)t^{\text{Ind}(x)}=W_K(t)$,
\end{center}
which is exactly the writhe polynomial we discussed in Section 3. It means that the writhe polynomial can be understood as a special case of the generalized indexed quandle cocycle invariant.

\subsection{Abelian extensions of indexed quandle by 2-cocycles}
In group theory, it is well known that there is a one to one correspondence between the set of isomorphism classes of central extensions of $G$ by $A$ and the cohomology group $H^2(G, A)$. The analogous relation between quandle extensions and quandle 2-cocycles was given by J. S. Carter et al in \cite{Car2003E}. In this subsection we want to explore the relation between the extensions of indexed quandles and the generalized indexed quandle 2-cocycles.

For an indexed quandle $(Q, \ast_i)$, an abelian group $A$ and a sequence of maps $\psi=\{\phi_i:Q\times Q\rightarrow A, i\in \mathds{Z}\}$, we define a set $E(Q, A, \psi)=A\times Q$ equipped with a sequence of binary operations
\begin{center}
$(a_1, x_1)\ast_i(a_2, x_2)=(a_1\phi_i(x_1, x_2), x_1\ast_ix_2)$.
\end{center}
The following proposition says $E(Q, A, \psi)$ is an indexed quandle if and only if $\psi$ is a generalized indexed quandle 2-cocycle. In this case, we say the set $E(Q, A, \psi)$ is an \emph{abelian extension} of $(Q, \ast_i)$.
\begin{proposition}
$E(Q, A, \psi)$ is an indexed quandle if and only if $\psi$ is a generalized indexed quandle 2-cocycle.
\end{proposition}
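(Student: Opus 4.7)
The plan is to verify each of the three indexed quandle axioms for $E(Q,A,\psi)$ in turn and read off what each one forces on the family $\psi = \{\phi_i\}$. Throughout, write a typical element of $E$ as $(a,x)$ with $a \in A$ and $x \in Q$, and use the defining formula $(a_1,x_1)\ast_i(a_2,x_2) = (a_1\phi_i(x_1,x_2),\, x_1\ast_i x_2)$.

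First I would dispatch axiom (1). Computing $(a,x)\ast_0(a,x) = (a\phi_0(x,x),\, x\ast_0 x) = (a\phi_0(x,x),\, x)$ using axiom (1) for $Q$, and this equals $(a,x)$ for every $(a,x)\in E$ if and only if $\phi_0(x,x)=1$ for every $x \in Q$. That is exactly the second half of the generalized 2-cocycle condition. Next I would observe that axiom (2) is automatic regardless of $\psi$: given $(b,y),(c,z)\in E$ and $i\in\mathbb{Z}$, the equation $(a,x)\ast_i(b,y)=(c,z)$ forces $x\ast_i y=z$, which by axiom (2) for $Q$ has a unique solution $x\in Q$; then $\phi_i(x,y)$ is a determined element of $A$, and $a=c\phi_i(x,y)^{-1}$ is forced as well.

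The heart of the argument is axiom (3), and it is where the cocycle identity must appear. I would expand both sides of $((a_1,x_1)\ast_i(a_2,x_2))\ast_j(a_3,x_3) = ((a_1,x_1)\ast_j(a_3,x_3))\ast_i((a_2,x_2)\ast_{j-i}(a_3,x_3))$ step by step. The second coordinates are
\begin{equation*}
(x_1\ast_i x_2)\ast_j x_3 \quad\text{versus}\quad (x_1\ast_j x_3)\ast_i(x_2\ast_{j-i}x_3),
\end{equation*}
which agree by axiom (3) for the indexed quandle $Q$. The first coordinates are
\begin{equation*}
a_1\,\phi_i(x_1,x_2)\,\phi_j(x_1\ast_i x_2,\, x_3) \quad\text{and}\quad a_1\,\phi_j(x_1,x_3)\,\phi_i(x_1\ast_j x_3,\, x_2\ast_{j-i}x_3).
\end{equation*}
Since $a_1$ is arbitrary and $A$ is a group, equality of these two expressions for all $x_1,x_2,x_3$ and all $i,j\in\mathbb{Z}$ is equivalent to the identity
\begin{equation*}
\phi_j(x_1,x_3)^{-1}\,\phi_j(x_1\ast_i x_2,\, x_3)\,\phi_i(x_1,x_2)\,\phi_i(x_1\ast_j x_3,\, x_2\ast_{j-i}x_3)^{-1}=1.
\end{equation*}

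Finally I would match this with the generalized indexed quandle 2-cocycle relation stated in the paper. Substituting $a=x_1$, $b=x_2$, $c=x_3$ in that relation and renaming the indices via $i \mapsto j$ and $j\mapsto j-i$ (so that the paper's $i-j$ becomes the new $i$) converts the paper's identity into precisely the display above; conversely the substitution reverses. Both axioms (1) and (3) thus read off $\psi$ exactly the defining conditions of a generalized indexed quandle 2-cocycle, while axiom (2) places no constraint, completing the ``if and only if''. The only genuine obstacle is the index bookkeeping in this last matching step; once the substitution is written down the equivalence is immediate, but carelessness with the subscripts $i$, $j$, $j-i$ and $i-j$ would easily produce a spurious mismatch.
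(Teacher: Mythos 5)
Your proof is correct and follows essentially the same route as the paper: a direct coordinatewise expansion of the three indexed quandle axioms for $E(Q,A,\psi)$, with axiom (1) yielding $\phi_0(x,x)=1$, axiom (2) automatic, and axiom (3) yielding the cocycle identity, which your index renaming $(i,j)\mapsto(j,j-i)$ correctly identifies with the paper's stated condition since that reparametrization is a bijection of $\mathds{Z}^2$. The only difference is cosmetic: you phrase each step as an equivalence, making the converse explicit, whereas the paper verifies the ``if'' direction and notes the converse is similar.
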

\begin{proof}
We assume $\psi$ is a generalized indexed quandle 2-cocycle. Recall that this means that for any $x_1, x_2, x_3$ of $Q$ we have
\begin{center}
$\phi_i(x_1, x_3)^{-1}\phi_i(x_1\ast_{i-j}x_2, x_3)\phi_{i-j}(x_1, x_2)\phi_{i-j}(x_1\ast_i x_3, x_2\ast_j x_3)^{-1}=1$ and $\phi_0(x_1, x_1)=1$.
\end{center}
First note that
\begin{center}
$(a_1, x_1)\ast_0(a_1, x_1)=(a_1\phi_0(x_1, x_1), x_1\ast_0x_1)=(a_1, x_1)$,
\end{center}
and
\begin{center}
$(a_3, x_3)\ast_i^{-1}(a_2, x_2)=(a_3(\phi_i(x_3\ast_i^{-1}x_2, x_2))^{-1}, x_3\ast_i^{-1}x_2)$.
\end{center}
Next it suffices to prove
\begin{center}
$((a_1, x_1)\ast_i(a_2, x_2))\ast_j(a_3, x_3)=((a_1, x_1)\ast_j(a_3, x_3))\ast_i((a_2, x_2)\ast_{j-i}(a_3, x_3))$.
\end{center}
One computes
\begin{flalign*}
&((a_1, x_1)\ast_i(a_2, x_2))\ast_j(a_3, x_3)\\
=&(a_1\phi_i(x_1, x_2), x_1\ast_ix_2))\ast_j(a_3, x_3)\\
=&(a_1\phi_i(x_1, x_2)\phi_j(x_1\ast_ix_2, x_3), (x_1\ast_ix_2)\ast_jx_3)\\
=&(a_1\phi_j(x_1, x_3)\phi_i(x_1\ast_jx_3, x_2\ast_{j-i}x_3), (x_1\ast_jx_3)\ast_i(x_2\ast_{j-i}x_3))\\
=&(a_1\phi_j(x_1, x_3), x_1\ast_jx_3)\ast_i(a_2\phi_{j-i}(x_2, x_3), x_2\ast_{j-i}x_3)\\
=&((a_1, x_1)\ast_j(a_3, x_3))\ast_i((a_2, x_2)\ast_{j-i}(a_3, x_3)).
\end{flalign*}

Conversely, if $E(Q, A, \psi)$ is an indexed quandle one can similarly prove that $\psi$ satisfies the 2-cocycle conditions.
\end{proof}

\section{What is a chord index?}
In previous sections we have listed several applications of the chord index in virtual knot theory. A natural question is, is it possible to define the chord index in a more general manner? Or more generally, what is a chord index indeed? Motivated by the parity axioms proposed by Manturov in \cite{Man2010}, here we introduce the chord index axioms in virtual knot theory.
\begin{definition}
Assume we have a rule which assigns an index (e.g. an integer, a polynomial, a group, etc.) to each real crossing point in a virtual link diagram. We say this rule satisfies the \emph{chord index axioms} if it satisfies the following conditions:
\begin{enumerate}
  \item The real crossing point involved in $\Omega_1$ has a fixed index.
  \item The two real crossing points involved in $\Omega_2$ have the same index.
  \item There is a natural 1-1 correspondence between the real crossing points involved in $\Omega_3$. The corresponding real crossing points have the same index.
  \item The index of the real crossing point involved in $\Omega_3^v$ is preserved under $\Omega_3^v$.
  \item The indices of all real crossing points which are not involved in a generalized Reidemeister move are preserved under this generalized Reidemeister move.
\end{enumerate}
\end{definition}

It is easy to observe that our chord index defined in Section 2 satisfies all chord index axioms for virtual knot diagrams. As a generalization of the chord index, in \cite{Che2016} we introduced the index function, which also satisfies all chord index axioms above. In this section we would like to provide a general construction of chord index which satisfies all chord index axioms above. Note that the chord index and the index function only can be defined for the real crossing points in a virtual knot diagram. However the following manner also can be used to define the chord index for real crossing points in a virtual link diagram.

In our original idea of the chord index \cite{Che2014}, the chord index is deduced from a $\mathds{Z}$-coloring on the semiarcs of a knot diagram. Later in \cite{Kau2013}, Kauffman introduced the notion of flat biquandle, which provides a more general algorithm for the colorings. In particular, Kauffman proved that essentially the coloring used in \cite{Che2014} is the unique affine linear flat biquandle. The main result of this section is to define a general chord index via biquandles.

Recall that a biquandle is a set equipped with two binary operations $\ast$ and $\circ$ (see Definition 5.2). Similar to the knot quandle, one can define the \emph{knot biquandle} $BQ_K$, which is generated by all the semiarcs (a segment of the diagram from a real crossing to the next real crossing) of a virtual knot diagram but now each real crossing point offers two relations, see Figure \ref{figure15}. The axioms of the biquandle guarantees the invariance of $BQ_K$ under the generalized Reidemeister moves. More precisely, the first axiom and Lemma 5.3 can be used to prove the invariance under $\Omega_1$. The seconde Reidemeister move $\Omega_2$ follows from the second axiom. See Figure \ref{figure19} for the invariance of $BQ_K$ under $\Omega_3$. Similar to the knot quandle, for any finite biquandle $BQ$, one can define the coloring invariant $Col_{BQ}(K)$ to be $|Hom(BQ_K, BQ)|$.
\begin{figure}[h]
\centering
\includegraphics{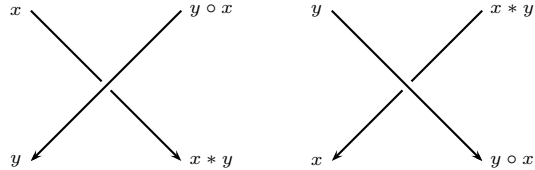}\\
\caption{The coloring rule of biquandle}\label{figure15}
\end{figure}
\begin{figure}[h]
\centering
\includegraphics{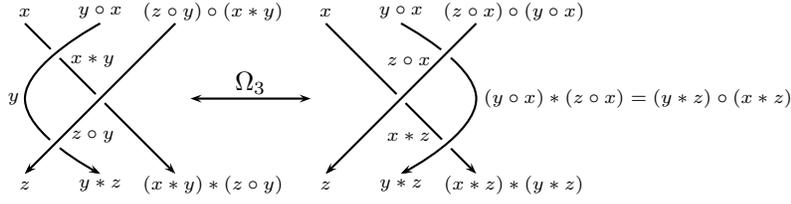}\\
\caption{Biquandle coloring under $\Omega_3$}\label{figure19}
\end{figure}

Fix a finite biquandle $BQ$ and an abelian group $A$, a map $\phi: BQ\times BQ\rightarrow A$ is called a \emph{(reduced) biquandle 2-cocycle} if for any $x, y, z\in BQ$ we have
\begin{center}
$\phi(x, x)=1$ and $\phi(x, y)\phi(y, z)\phi(x\ast y, z\circ y)=\phi(x\ast z, y\ast z)\phi(y\circ x, z\circ x)\phi(x, z)$.
\end{center}
For the sake of convenience, we introduce the \emph{universal 2-cocycle group} of $BQ$, which can be defined as the abelianization of
\begin{center}
$G_{BQ}=\langle(x, y)\in BQ\times BQ|(x, x)=1, (x, y)(y, z)(x\ast y, z\circ y)=(x\ast z, y\ast z)(y\circ x, z\circ x)(x, z)\rangle$.
\end{center}
We name it in this way since each homomorphism $\rho: G_{BQ}/[G_{BQ}, G_{BQ}]\rightarrow A$ provides a biquandle 2-cocycle. More precisely, for any biquandle 2-cocycle $\phi: BQ\times BQ\rightarrow A$ there is a map $\rho: G_{BQ}/[G_{BQ}, G_{BQ}]\rightarrow A$ such that the following diagram commutes

\hspace{5cm} \xymatrix{
  BQ\times BQ \ar[dr]_{\phi} \ar[r]^{i}
                & G_{BQ}/[G_{BQ}, G_{BQ}] \ar[d]^{\rho}  \\
                & A            }\\
here $i$ denotes the quotient map from $BQ\times BQ$ to $G_{BQ}/[G_{BQ}, G_{BQ}]$.

Let us consider another group
\begin{center}
$\mathfrak{G}_{BQ}=\langle(x, y)\in BQ\times BQ|(x, x)=1, (x, y)=(x\ast z, y\ast z), (y, z)=(y\circ x, z\circ x), (x, z)=(x\ast y, z\circ y)\rangle$.
\end{center}
In general $\mathfrak{G}_{BQ}$ is not an abelian group. Let $K$ be a virtual knot diagram and $f$ a coloring, we associate a weight $\mathfrak{W}_f=(x, y)\in \mathfrak{G}_{BQ}$ to the two crossing points in Figure \ref{figure15}. The main difference between $G_{BQ}$ and $\mathfrak{G}_{BQ}$ is, $G_{BQ}$ requires that the sum of the contributions coming from the three crossing points on the left side of $\Omega_3$ equals the sum of the contributions provided by the three crossing points on the right side. However for $\mathfrak{G}_{BQ}$, we require the contribution from each crossing point involved in $\Omega_3$ is preserved under $\Omega_3$, see Figure \ref{figure19}. Now we define the \emph{index} (associated to $BQ$) of a crossing point to be $\sum\limits_f\mathfrak{W}_f\in\mathds{Z}\mathfrak{G}_{BQ}$. Notice that the index does not depend on the choice of $f$. If there exists a homomorphism $\rho$ from $\mathfrak{G}_{BQ}$ to some other group $A$, then we obtain an induced chord index $\rho(\sum\limits_f\mathfrak{W}_f)\in\mathds{Z}A$.

\begin{example}[\cite{Kau2013}]
Let $X=(\mathds{Z}, \ast, \circ)$ be a biquandle, where $x\ast y=x\circ y=x+1$. For each virtual knot diagram there exist infinitely many colorings. In particular, if one chooses a coloring $f$ , then any other coloring can be obtained from $f$ by adding an integer to the assigned number on each semiarc of $K$. Consider a map $\rho: \mathds{Z}\times \mathds{Z}\rightarrow \mathds{Z}$ defined by $\rho(x, y)=y-x$. One can naturally extend this map to a homomorphism from $\mathfrak{G}_X$ to $\mathds{Z}[\mathds{Z}]$. We still use $\rho$ to denote it. Now the induced chord indices of the two crossing points depicted in Figure \ref{figure15} are both equal to $\sum\limits_{\mathds{Z}}\rho(x, y)=\sum\limits_{\mathds{Z}}(y-x)$. It is easy to observe that essentially this is nothing but the index we defined in Section 2.
\end{example}

Analogous to the definition of the writhe polynomial, for any $\mathfrak{g}\neq \sum1$ we use $a_{\mathfrak{g}}(K)$ to denote the sum of the writhes of all crossings which have index $\mathfrak{g}\in\mathds{Z}\mathfrak{G}_{BQ}$. For $\sum1$ we define $a_{\sum1}(K)$ to be the sum of the writhes of all crossings which have index $\sum 1$ minus $w(K)$. The next theorem follows directly from our constructions above, which can be regarded as an extension of Theorem 3.1.
\begin{theorem}
Let $L$ be a virtual link diagram, then for any finite biquandle $BQ$ and any $\mathfrak{g}\in\mathds{Z}\mathfrak{G}_{BQ}$, $a_{\mathfrak{g}}(L)$ is a virtual link invariant.
\end{theorem}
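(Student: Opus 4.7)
My plan is to reduce the theorem to verifying that the assignment $c \mapsto \sum_f \mathfrak{W}_f(c) \in \mathds{Z}\mathfrak{G}_{BQ}$ satisfies the chord index axioms of Definition 6.1, and then to show that any function on crossings satisfying those axioms, together with the writhe-correction for the neutral index $\sum 1$, yields a virtual link invariant of the form $a_{\mathfrak g}$. In effect the defining relations of $\mathfrak{G}_{BQ}$ have been engineered precisely so that the weight at each real crossing is a Reidemeister-invariant quantity, and the theorem is then a bookkeeping matter analogous to the proof of Theorem 3.1.

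The first step is to verify the chord index axioms for the $\mathfrak{G}_{BQ}$-valued weight. For $\Omega_1$, the two semiarcs at the single real crossing receive the same color $x$ under any coloring $f$, so $\mathfrak{W}_f = (x,x)$, which equals the identity by the relation $(x,x)=1$; hence the total index of that crossing is $\sum_f 1$, the ``neutral'' element. For $\Omega_2$, the two crossings are colored by the same pair $(x,y)$ (in the appropriate direction), so they receive identical weights in $\mathfrak{G}_{BQ}$, and the biquandle axioms give a bijection between colorings of the two sides of the move, so the summed weights agree. For $\Omega_3$, there is a natural $1$-$1$ correspondence between colorings on the two sides (a standard fact about biquandle colorings under $\Omega_3$, cf.\ Figure~\ref{figure19}), and the three defining relations $(x,y)=(x\ast z,y\ast z)$, $(y,z)=(y\circ x,z\circ x)$, $(x,z)=(x\ast y,z\circ y)$ of $\mathfrak{G}_{BQ}$ are exactly what is needed to match the weight of each of the three crossings on one side with its counterpart on the other. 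For $\Omega_3^v$ and the purely virtual moves, no real crossings are involved in a way that changes their colorings, so the weights are clearly preserved. Finally, any real crossing not appearing in a given move inherits its coloring from the ambient diagram, so its weight is unchanged — this uses that the biquandle axioms make the coloring assignment at every other semiarc unchanged by the move.

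Once the axioms are in place, I establish invariance of $a_{\mathfrak g}$ move by move. Under $\Omega_1$, the created/destroyed crossing carries index $\sum 1$ and writhe $\pm 1$; this alters the raw sum $\sum_{\mathrm{Ind}(c)=\sum 1} w(c)$ by $\pm 1$, but the global writhe $w(L)$ also changes by $\pm 1$, so the correction $-w(L)$ in the definition of $a_{\sum 1}$ cancels the change. For all $\mathfrak g\neq \sum 1$, no crossing of index $\mathfrak g$ is affected, and $w(L)$ does not enter, so $a_{\mathfrak g}$ is unchanged. Under $\Omega_2$, the two crossings created or destroyed have identical indices and opposite writhes, so their contributions to every $a_{\mathfrak g}$ cancel exactly (and $w(L)$ is unchanged, leaving the correction in $a_{\sum 1}$ intact). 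Under $\Omega_3$, the correspondence of crossings preserves both indices and writhes, so every $a_{\mathfrak g}$ is unchanged. Under $\Omega_3^v$ and the purely virtual moves there is nothing to check.

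The main obstacle I anticipate is the $\Omega_3$ verification: one must carefully match the three crossings on each side of the move, show that the natural bijection on colorings really does pair the indicated weights $(x,y)$, $(y,z)$, $(x,z)$ on one side with $(x\ast z, y\ast z)$, $(y\circ x, z\circ x)$, $(x\ast y, z\circ y)$ on the other, and confirm that the bijection is compatible with the sum over colorings of the whole diagram (so that the ``sum over $f$'' defining the index does not get reshuffled inconsistently with the crossing correspondence). This is essentially the content of Figure~\ref{figure19} combined with the defining presentation of $\mathfrak{G}_{BQ}$, so once those are set up cleanly the check is routine but is the step where all the subtlety lives.
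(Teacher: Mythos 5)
Your proposal is correct and follows essentially the same route the paper intends: the defining relations of $\mathfrak{G}_{BQ}$ together with the bijection of biquandle colorings under each generalized Reidemeister move show that the index $\sum_f\mathfrak{W}_f$ satisfies the chord index axioms, after which the invariance of $a_{\mathfrak{g}}$ is the same bookkeeping as in Theorem 3.1 (the paper itself only says the theorem ``follows directly from our constructions above''). The one step you state without justification --- that at the $\Omega_1$ crossing both labels coincide so the weight is $(x,x)=1$ --- is exactly where the first biquandle axiom and Lemma 5.3 are needed, as the paper indicates in its discussion of the invariance of $BQ_K$ under $\Omega_1$.
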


\begin{example}
As we mentioned in the beginning of this section, now we can define the chord indices for the real crossing points in a virtual link diagram. Let us consider the virtual link $L$ in Figure \ref{figure20}. Choose a biquandle $X=\{1, 2\}$, and the binary operations are defined as $1\ast i=1\circ i=2$ and $2\ast i=2\circ i=1$ $(i=1, 2)$. It is easy to observe that in $\mathfrak{G}_X$ we have $1=(1, 1)=(2, 2), (1, 2)=(2, 1)$. Therefore $\mathfrak{G}_X\cong\mathds{Z}$, which is generated by $t=(1, 2)$. Note that there exist four colorings and the indices of crossing points $a, b, c$ are $1+1+t+t, t+t+t+t, 1+1+t+t$ respectively. As a result we have $a_{1+1+t+t}(L)=2$ and $a_{t+t+t+t}(L)=1$. As a corollary, we conclude that the real crossing number of this virtual link is 3.
\begin{figure}[h]
\centering
\includegraphics{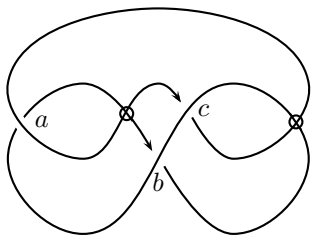}\\
\caption{Chord indices of a virtual link}\label{figure20}
\end{figure}
\end{example}

Since in general the biquandle structure is more complicated than the quandle structure, a natural thought is to replace the biquandles in our construction with quandles. Obviously, because a quandle is also a biquandle, we can still define the chord index in this case. However the following proposition tells us it provides no new information except the coloring invariant.
\begin{proposition}
Let $Q$ be a finite quandle and $K$ a virtual knot diagram, then all the crossing points of $K$ have the same index $\sum\limits_{Col_Q(K)}1$.
\end{proposition}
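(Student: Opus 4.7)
The plan is to exploit the triviality of the second biquandle operation when $Q$ is regarded as a biquandle, and to combine this with the standard fact that a quandle coloring of a knot (one component) confines all arc colors to a single $\mathrm{Inn}(Q)$-orbit. First I would make $Q = (Q, \ast)$ into a biquandle by setting $y \circ x = y$; under this identification the axioms in Definition 5.2 reduce to the quandle axioms, and the biquandle coloring rule of Figure \ref{figure15} degenerates to the ordinary quandle coloring rule, since the over-strand label is unchanged across a crossing. So biquandle colorings coincide with quandle colorings, and $|\mathrm{Hom}(BQ_K, Q)| = Col_Q(K)$.

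Next I would specialize the defining relations of $\mathfrak{G}_Q$ to this setting. With $\circ$ trivial they collapse to $(x,x)=1$, $(x,y)=(x\ast z, y\ast z)$, and $(x,z)=(x\ast y, z)$, while the relation coming from $(y,z)=(y\circ x, z\circ x)$ becomes tautological. The third relation lets us replace the first coordinate of any generator by $x\ast y$ for an arbitrary $y$, and hence by any element in the $\mathrm{Inn}(Q)$-orbit of $x$. Combined with $(x,x)=1$, this gives $(x,z)=(z,z)=1$ in $\mathfrak{G}_Q$ whenever $x$ and $z$ lie in a common $\mathrm{Inn}(Q)$-orbit.

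Finally, for any coloring $f$ of the virtual knot diagram $K$, traversing the single component of $K$ and applying $a\mapsto a\ast y$ at every under-passage shows by induction that the colors of all arcs of $K$ lie in one $\mathrm{Inn}(Q)$-orbit $O_f \subseteq Q$. Consequently, at any real crossing $c$ with labels $x,y$ one has $\mathfrak{W}_f=(x,y)=1$ in $\mathfrak{G}_Q$ by the previous paragraph. Summing $\mathfrak{W}_f$ over the $Col_Q(K)$ colorings therefore produces $\sum_{Col_Q(K)} 1$, independent of $c$, which is exactly the claim. The only step requiring any real care is the single-orbit observation; it relies on $K$ being a knot rather than a link, and indeed fails when different components can be colored by elements from distinct orbits, which is precisely why the proposition is formulated for virtual knot diagrams. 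Everything else is a direct specialization of the relations defining $\mathfrak{G}_{BQ}$.
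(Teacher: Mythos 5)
Your proof is correct and follows essentially the same route as the paper: the key step in both is that the relation $(x,z)=(x\ast y,z\circ y)$ with trivial $\circ$ lets one slide the first coordinate around its orbit, so $(x,z)=(z,z)=1$ whenever $x,z$ lie in a common $\mathrm{Inn}(Q)$-orbit, and for a one-component diagram all arc colors of a fixed coloring lie in a single orbit. The only cosmetic difference is that the paper phrases the single-orbit fact via the connectedness of the knot quandle $Q_K$ (its image sits in one connected component $Q'$ of $Q$, and $\mathfrak{G}_{Q'}$ is trivial), whereas you obtain it by traversing the diagram.
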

\begin{proof}
Fix a coloring $f$, if $Q$ is not connected, then let us focus on the component $Q'$  which includes $f(Q_K)$ (recall that $Q_K$ is connected). We claim that the group $\mathfrak{G}_{Q'}$ is trivial. In fact, according to the definition of $\mathfrak{G}_{Q'}$ we have relation $(x, z)=(x\ast y, z)$. Since $Q'$ is connected, there exist a sequence of elements in $Q'$, say $\{a_1, \cdots, a_n\}$, such that $(\cdots(x\ast^{\epsilon_1} a_1)\cdots)\ast^{\epsilon_n} a_n=z$ $(\epsilon_i=\pm1)$. Now we have
\begin{center}
$(x, z)=(x\ast^{\epsilon_1}a_1, z)=\cdots=((\cdots(x\ast^{\epsilon_1} a_1)\cdots)\ast^{\epsilon_n} a_n, z)=(z, z)=1$,
\end{center}
which means $\mathfrak{G}_{Q'}$ contains only one element. The result follows.
\end{proof}

In the end of this section we would like to remark that the chord index in Example 6.2 and Example 6.4 are both trivial for any crossing point in a classical knot diagram. It is natural to ask whether it is possible to define a nontrivial chord index for the crossing points in a classical knot diagram.

\section{Chord index in twisted knot theory}
\subsection{Twisted knot theory and twisted biquandle}
In the end of this paper we concern the chord index in twisted knot theory. Recall that virtual knot theory studies the embeddings of $S^1$ in thickened closed orientable surfaces, if we do not require that the surface must be orientable, then we encounter the twisted knot theory. Twisted knot theory was first proposed by Bourgoin in \cite{Bou2008}. A \emph{twisted knot} is a stable equivalence class of $S^1$ in oriented 3-manifolds that are $I$-bundles over closed but not necessarily orientable surfaces. One main result in \cite{Bou2008} generalizes Kuperberg's result \cite{Kup2003} from orientable surfaces to nonorientable surfaces. More precisely, Bourgoin mimicked Kuperberg's approach to prove that the irreducible representative of a twisted knot is unique. Therefore twisted knot theory is a proper extension of virtual knot theory.

One can also use twisted knot diagrams to illustrate twisted knots. A \emph{twisted knot diagram} is a virtual knot diagram with some bars on edges. We say two twisted knot diagrams are \emph{equivalent} if they are related by a sequence of generalized Reidemeister moves (see Figure \ref{figure1}) and twisted Reidemeister moves (see Figure \ref{figure16}). Similar to virtual knots, from each twisted knot diagram one can obtain an embedding of $S^1$ in a thickened surface, where each bar corresponds to a half-twist. It was proved in \cite{Bou2008} that these two definitions of twisted knots are equivalent.
\begin{figure}[h]
\centering
\includegraphics{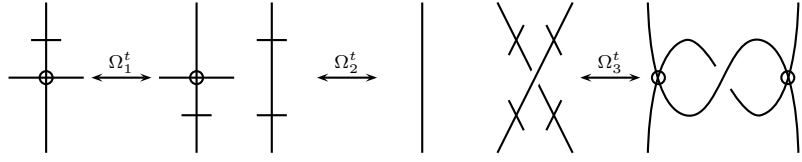}\\
\caption{Twisted Reidemeister moves}\label{figure16}
\end{figure}

The twisted knot group and the twisted Jones polynomial was defined in \cite{Bou2008}. Later Naoko Kamada generalized the arrow/Miyazawa polynomial to twisted knots \cite{Kam2012}. Using the similar idea of last section we want to introduce the chord index of twisted knots with a twisted biquandle. Note that the twisted quandle has been introduced by Naoko Kamada in \cite{Kam2012}, which was motivated by the twisted group defined by Bourgoin. Here the twisted biquandle discussed below can not be thought of as a biquandle version of the twisted quandle simply, although they reduce to biquandle and quandle respectively when the knot diagram contains no bar.
\begin{definition}
A \emph{twisted biquandle} is a biquandle $(BQ, \ast, \circ)$ with an additional map $f: BQ\rightarrow BQ$ which satisfies the following axioms
\begin{enumerate}
  \item $f(y\circ x)\ast f(x\ast y)=f(y)$,
  \item $f(x\ast y)\circ f(y\circ x)=f(x)$,
  \item $f^2(x)=x$.
\end{enumerate}
\end{definition}

With a given finite twisted biquandle $(BQ, \ast, \circ, f)$ we can define a coloring invariant $Col_{BQ}(K)$ for each twisted knot $K$ as follows. Choose a twisted knot diagram of $K$, for simplicity we still use $K$ to denote it. Assume there are $c_r(K)$ real crossing points and $b$ bars in $K$, now these crossings and bars split $K$ into $2c_r(K)+b$ segments. For each segment we label an element of $BQ$ to it such that the coloring rules described in Figure \ref{figure15} are satisfied. In additional, if two segments are adjacent to the same bar then the elements on them differ by $f$.
\begin{proposition}
$Col_{BQ}(K)$ is a twisted knot invariant.
\end{proposition}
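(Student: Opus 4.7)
The plan is to reduce the invariance of $Col_{BQ}(K)$ to two families of local checks: the generalized Reidemeister moves (already handled by the underlying biquandle structure) and the twisted Reidemeister moves (which are controlled precisely by the three extra axioms on $f$). Since $Col_{BQ}(K) = |\mathrm{Hom}(TBQ_K, BQ)|$, it suffices, for each local move, to exhibit a bijection between colorings of the two diagrams that agrees outside the tangle where the move is performed.

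First I would observe that the coloring rule at every real crossing is exactly the biquandle rule of Figure \ref{figure15}, and the rule at every virtual crossing is trivial. Therefore, exactly as in the biquandle proof recalled just before Example 6.2 (with $\Omega_1$ handled by the first biquandle axiom together with Lemma 5.3, $\Omega_2$ by the invertibility axiom, and $\Omega_3$ by the distributivity axioms illustrated in Figure \ref{figure19}), the number of colorings is preserved under all the generalized Reidemeister moves. The moves $\Omega_1', \Omega_2', \Omega_3', \Omega_3^v$ involve only virtual crossings and bars (or none of either), and since the bar relation $f$ is a bijection on $BQ$ the coloring assignments on the affected segments correspond bijectively.

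Next I would verify the three twisted Reidemeister moves of Figure \ref{figure16}. For the move that cancels two consecutive bars on the same strand, a coloring on one side assigns $x$ before the first bar and hence $f(f(x))$ after the second; by axiom (3), $f^2(x)=x$, which matches the unique coloring on the other side of the move. For the move that slides a bar past a virtual crossing, the two sides each impose the relation that the bar acts by $f$ on the strand it crosses and that the virtual crossing imposes no relation; this gives a tautological bijection of colorings. The heart of the argument is the third move, which slides a bar through a real crossing so that one bar on a single strand becomes bars on both outgoing strands (or vice versa). Here one labels the incoming strands by $x,y$, computes the two outgoing strands with and without the bar on each side of the move, and checks that the resulting labels coincide. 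Precisely, on the side where the bar sits on the understrand before the crossing the outgoing labels are computed via the biquandle operations applied to $f(x),y$ (say), producing $f(x)\ast y$ and $y\circ f(x)$; on the other side of the move the outgoing labels are $f(y\circ x)$ and $f(x\ast y)$. The identifications needed for these to match under the bijection of colorings are exactly axioms (1) and (2) of Definition~7.1 (and their inverses, using that $f^2=\mathrm{id}$ makes $f$ a bijection). This provides the required bijection between colorings on the two sides of the move.

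The main obstacle is the third move: one must set up the book-keeping so that the labels read off from the two biquandle relations at the real crossing, after insertion of $f$ on the appropriate segments, are forced to coincide by axioms~(1) and (2). In principle there are four versions of this move depending on the sign of the crossing and on whether the bar enters on the over- or understrand, so I would treat one case in detail and deduce the remaining three by applying axiom~(3) (which lets us swap an $f$-labelled segment for an un-$f$-labelled one) and by the invertibility of $\ast$ and $\circ$ from the biquandle axioms. Once each of these local bijections is established the proof is complete, since every coloring of $K$ restricts to a coloring of the complementary diagram and the local bijections piece together to a global bijection on $\mathrm{Hom}(TBQ_K, BQ)$.
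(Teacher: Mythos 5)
Your handling of the generalized Reidemeister moves, of $\Omega_1^t$ (bar past a virtual crossing) and of $\Omega_2^t$ (two adjacent bars cancel via $f^2=\mathrm{id}$) matches the paper's proof. The gap is in the step you yourself call the heart of the argument: you have misstated the third twisted move. $\Omega_3^t$ does not slide \emph{one} bar through a real crossing so that it becomes bars on both outgoing strands --- such a move would change the number of bars by one, contradicting the fact (noted in Section 7.2) that the parity of the number of bars is preserved by the twisted moves. The actual move carries \emph{two} bars, one on each strand, from one side of a real crossing to the other side, and simultaneously \emph{switches} the over/under information of the crossing. Your omission of the crossing switch is not cosmetic: it is exactly what makes axioms (1) and (2) the right compatibility conditions.

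Concretely, with inputs $x,y$ and outputs $x\ast y$, $y\circ x$ at the crossing, one side of $\Omega_3^t$ produces the boundary labels $f(x\ast y)$ and $f(y\circ x)$ (crossing first, then bars on both outputs), while the other side produces $f(x)$, $f(y)$ after the bars and then feeds them into the \emph{reversed} crossing; consistency of the two colorings at that reversed crossing is precisely $f(y\circ x)\ast f(x\ast y)=f(y)$ and $f(x\ast y)\circ f(y\circ x)=f(x)$, i.e.\ axioms (1) and (2), which is what Figure \ref{figure17} records. By contrast, the identities your computation would require, namely $f(x)\ast y=f(x\ast y)$ and $y\circ f(x)=f(y\circ x)$, are neither among the twisted biquandle axioms nor consequences of them, so the bijection of colorings you assert for your version of the move is unsupported (and the move itself is not a twisted Reidemeister move). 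Replacing your local picture by the two-bar, crossing-switching picture repairs the argument and brings it in line with the paper's proof.
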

\begin{proof}
The invariances of $Col_{BQ}(K)$ under generalized Reidemeister moves are guaranteed by the axioms of biquandle (see Definition 5.2). Hence it is sufficient to check the twisted Reidemeister moves in Figure \ref{figure16}. For $\Omega_1^t$, there is nothing need to prove. For $\Omega_2^t$, the invariance of $Col_{BQ}(K)$ follows from the fact that $f$ is an involution. Figure \ref{figure17} explains why $Col_{BQ}(K)$ is invariant under $\Omega_3^t$.
\end{proof}
\begin{figure}[h]
\centering
\includegraphics{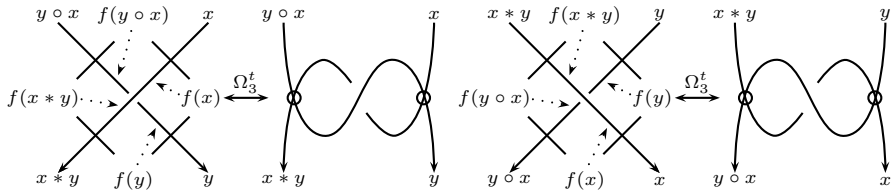}\\
\caption{The invariance of $Col_{BQ}(K)$ under $\Omega_3^t$}\label{figure17}
\end{figure}

In next subsection we will focus on a special twisted biquandle $(\mathds{Z}, a\ast b=a\circ b=a+1, f(a)=-a)$. We will show how to associate an index for each real crossing point via this twisted biquandle.

\subsection{A concrete example of chord index for twisted knots}
We consider the colorings of twisted knots using twisted biquandle $(\mathds{Z}, a\ast b=a\circ b=a+1, f(a)=-a)$. A naive observation is the parity of the number of bars is preserved under twisted Reidemeister moves. For example if a twisted knot diagram contains an odd number of bars then it can not represent a virtual knot. We continue our discussion in two cases.

First let we consider a twisted knot $K$ which has an odd number of bars. In this case, we have the following result.
\begin{lemma}
Let $K$ be a twisted knot diagram, if there are an odd number of bars in $K$, then the coloring is unique.
\end{lemma}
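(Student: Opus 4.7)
The plan is to parameterize all colorings by a single integer — the label on one fixed semiarc — and show that the consistency condition coming from traversing the whole knot pins that integer down to a unique value exactly when $B$ is odd. Since the diagram has one component, once the label on any single semiarc is fixed, all the other labels are forced by propagation through crossings and bars; so uniqueness of a coloring reduces to uniqueness of the free parameter.

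First I would analyze how the label changes under propagation. Starting from a semiarc carrying the integer $x$ and moving along $K$ in its orientation, every passage through a real crossing (whether as over- or under-strand) replaces the label by $a+1$, because $a\ast b = a\circ b = a+1$ in this biquandle; every passage through a bar replaces $a$ by $f(a)=-a$. So every semiarc in the diagram is forced to carry a label of the form $(-1)^{k} x + c$, with $k,c$ read directly off the traversed portion. Going once around the entire knot and returning to the starting semiarc produces a self-consistency equation $g(x)=x$, where $g(x)=(-1)^{B}x+q$ and $B$ is the total number of bars.

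Now suppose $B$ is odd. Then the consistency equation becomes $-x+q = x$, i.e.\ $2x=q$, and the problem reduces to showing that $q$ is even (so that a unique integer solution $x=q/2$ exists). The clean way to see this is to work in the infinite dihedral group $\langle T,N\mid N^{2}=1,\ NT=T^{-1}N\rangle$, where $T$ encodes ``add $1$'' and $N$ encodes ``negate''. Any product of $T$'s and $N$'s rewrites as $N^{p}T^{q}$ with $p$ the total count of $N$'s and $q$ a signed sum of $\pm 1$, one contribution per occurrence of $T$. In our situation, traversing the single-component knot once uses each real crossing exactly twice (once on each strand), so the number of $T$-factors is $2C$, where $C$ is the number of real crossings of $K$. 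Therefore $q$ is a sum of $2C$ terms each equal to $\pm 1$, which forces $q\equiv 2C\equiv 0 \pmod 2$.

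With $q$ even, $x=q/2$ is the unique integer making $g(x)=x$; propagating this label through the diagram then produces a well-defined coloring, and uniqueness of $x$ gives uniqueness of the coloring. The only delicate step is the parity argument for $q$; its essence is purely topological — namely that on a connected one-component diagram each real crossing contributes an even number of crossing-encounters to the traversal — and this is the point where the oddness of $B$ interacts with the $T$/$N$ commutation relation to rule out any coloring ambiguity.
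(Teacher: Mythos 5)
Your proof is correct and follows essentially the same route as the paper: fix the label on one segment, traverse the diagram to obtain the monodromy equation $2x=q$, and show $q$ is even because each real crossing is traversed exactly twice, so the unique solution $x=q/2$ yields the unique coloring. The only cosmetic differences are your infinite-dihedral-group packaging and a mod-$2$ count where the paper uses the exact cancellation $\sum_i s(e_i)=0$ (note that a crossing passage contributes $\pm1$ rather than always $+1$, depending on the crossing sign and over/under strand, but your parity argument is insensitive to this).
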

\begin{proof}
Assume $K$ has $2n-1$ bars, denoted by $b_1, \cdots, b_{2n-1}$. We use $e_1, \cdots, e_{2n-1}$ to denote the edges of $K-\{b_1, \cdots, b_{2n-1}\}$, where the order of $e_1, \cdots, e_{2n-1}$ is consistent with the orientation of $K$. Consider an edge $e_i$, we use $o_+(e_i), o_-(e_i), u_+(e_i), u_-(e_i)$ to denote the number of positive overcrossings, negative overcrossings, positive undercrossings and negative undercrossings on $e_i$ respectively. Then we assign an integer $s(e_i)=u_+(e_i)+o_-(e_i)-u_-(e_i)-o_+(e_i)$ to $e_i$.

Recall that by a coloring we mean an assignment of integers to the segments which are obtained from $K$ by deleting all real crossing points and bars. Along the direction of $K$, denote the segment adjoint to $b_1$ by $a$. Notice that according to the coloring rules, when we assign an integer $k$ to $a$ then the coloring of any other segment can be derived from the coloring on $a$. If a coloring is well-defined, then the derived coloring on $a$ must equal $k$. This can be described by the following equation
\begin{center}
$\sum\limits_{i=1}^{n-1}s(e_{2i})-\sum\limits_{i=1}^{n}s(e_{2i-1})-k=k$.
\end{center}
Since
\begin{center}
$\sum\limits_{i=1}^{n-1}s(e_{2i})+\sum\limits_{i=1}^{n}s(e_{2i-1})=\sum\limits_{i=1}^{2n-1}s(e_i)=0$.
\end{center}
It follows that $\sum\limits_{i=1}^{n-1}s(e_{2i})-\sum\limits_{i=1}^{n}s(e_{2i-1})$ is even, which means that $k$ has a unique solution.
\end{proof}

Now we know that for any twisted knot $K$ there is a unique coloring by using the twisted biquandle $(\mathds{Z}, a\ast b=a\circ b=a+1, f(a)=-a)$. If the colors on the four segments around a crossing $c$ are depicted as that in Figure \ref{figure15}, then we define the \emph{index} of $c$ to be Ind$(c)=y-x$. As before we define
\begin{center}
$a_n(K)=
\begin{cases}
\sum\limits_{\text{Ind}(c)=n}w(c)& \text{if }n\neq0;\\
\sum\limits_{\text{Ind}(c)=n}w(c)-w(K)& \text{if }n=0.
\end{cases}$
\end{center}
The following theorem is an analogue of Theorem 3.1 for twisted knots with an odd number of bars.
\begin{theorem}
Let $K$ be a twisted knot with an odd number of bars, then each $a_n(K)$ is a twisted knot invariant. Or equivalently, the polynomial $T_o(K)=\sum\limits_{n\in\mathds{Z}}a_n(K)t^n$ is a twisted knot invariant.
\end{theorem}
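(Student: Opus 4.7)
My plan is to verify that $a_n(K)$ is preserved by every generalized Reidemeister move in Figure~\ref{figure1} and every twisted Reidemeister move in Figure~\ref{figure16}, following the template of Theorem~3.1. By Lemma~7.3 the coloring is unique, so the index $y-x$ at each real crossing is a well-defined function of the diagram; each step below then amounts to tracing what the local replacement does to this index and to the writhe.

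First I would dispose of the ``trivial'' moves. The virtual moves $\Omega_1', \Omega_2', \Omega_3'$ and the mixed move $\Omega_3^v$ do not involve any real crossings, and the coloring of real semiarcs is unchanged since virtual crossings carry no biquandle relation. The twisted move $\Omega_1^t$ involves no real crossings either. For $\Omega_2^t$, two adjacent bars cancel via $f^2 = \mathrm{id}$, so the external colors match on both sides of the move and every real-crossing index is preserved. For $\Omega_3^t$, a bar slides through a virtual crossing; again no real crossing is touched, and the local recoloring forced by the move is compatible with the unique global coloring, so indices of all real crossings are preserved. For $\Omega_2$, applying the biquandle rules twice in succession to the two new crossings shows that the external colors on the two sides of the move agree; the two crossings then share the same value of $y - x$ but carry opposite writhes, so their contributions to $a_n$ cancel. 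For $\Omega_3$, the three crossings before and after the move are in a natural bijection with identical incoming colors (by the third biquandle axiom) and the same writhes, so every $a_n$ is preserved.

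The crux, and the main obstacle I expect, is $\Omega_1$. The kink crossing must have index $0$: the small loop in the kink contains no bars or other real crossings, so tracing the biquandle relations along it forces the under-outgoing and over-incoming semiarcs of the kink to have the same color, giving $y - x = 0$. The move changes $w(K)$ by $\pm 1$, and the correction term $-w(K)$ in the definition of $a_0(K)$ exactly cancels the new contribution of the kink to $\sum_{\mathrm{Ind}(c) = 0} w(c)$; for $n \neq 0$ the kink contributes nothing because its index is $0$. The delicate point is that adding a kink may force the unique global coloring to shift, so one must check, using the linear-equation argument of Lemma~7.3 together with the oddness of the number of bars, that the shifted coloring still exists uniquely and agrees locally with the naive extension at the kink, so the identity $y = x$ there survives.
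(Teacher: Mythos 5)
Your treatment of the generalized Reidemeister moves is fine and essentially matches the paper, which disposes of them by citing Proposition 2.3; your extra worry about the unique coloring ``shifting'' under $\Omega_1$ is resolved by the standard fact that biquandle colorings of the two sides of any move correspond bijectively with colors unchanged outside the local disk, so uniqueness forces the colorings to agree there. The genuine gap is in the twisted moves: you have misidentified $\Omega_3^t$. The move in which a bar slides past a virtual crossing is $\Omega_1^t$ (the truly vacuous case), whereas $\Omega_3^t$ (see Figure \ref{figure16} and Figure \ref{figure17}) is the move in which a pair of bars passes a \emph{classical} crossing, exchanging the over/under information of that crossing. This is the only twisted move that touches a real crossing at all, hence the only place where the specific twisted biquandle $(\mathds{Z}, a\ast b=a\circ b=a+1, f(a)=-a)$ genuinely enters: the labels adjacent to the crossing are altered by $f(a)=-a$, the crossing itself is switched (so its writhe and its local colors both change), and one must verify from the colored local pictures that its contribution to each $a_n(K)$, together with the normalization by $w(K)$, is unchanged. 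This is exactly the case the paper isolates (``the proof can be read from Figure \ref{figure17} by taking $a\ast b=a\circ b=a+1$ and $f(a)=-a$''), and your proposal contains no argument for it.

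Because you treated $\Omega_3^t$ as if no real crossing were involved, your proof as written would go through verbatim for \emph{any} involution $f$, which cannot be right: the compatibility of the index $y-x$ with bars passing a classical crossing depends on $f$ being negation and on the crossing switch, and is precisely what distinguishes the odd-bar invariant $T_o(K)$ from a naive transplant of Theorem 3.1. To repair the proof, add the explicit check for $\Omega_3^t$: color the two local diagrams (as in Figure \ref{figure17}), use the twisted biquandle axioms $f(y\circ x)\ast f(x\ast y)=f(y)$ and $f(x\ast y)\circ f(y\circ x)=f(x)$ specialized to $a\ast b=a\circ b=a+1$, $f(a)=-a$, and confirm that the indexed writhe contribution of the relocated crossing is preserved; the crux you announced ($\Omega_1$) is comparatively routine.
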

\begin{proof}
The invariance of $a_n(K)$ under the generalized Reidemeister moves follows directly from Proposition 2.3. For $\Omega_1^t$ and $\Omega_2^t$ there is nothing need to prove. For $\Omega_3^t$, the proof can be read from Figure \ref{figure17} by taking $a\ast b=a\circ b=a+1$ and $f(a)=-a$.
\end{proof}

Now we turn to the twisted knots with an even number of bars. Note that this set contains all virtual knots, and hence all classical knots. Let $K$ be a twisted knot with $2n$ bars, denoted by $b_1, \cdots, b_{2n}$. These bars divides the knot diagram $K$ into $2n$ edges, say $e_1, \cdots, e_{2n}$, where the order of $e_1, \cdots, e_{2n}$ agrees with the direction of $K$. For each edge $e_i$ the assigned integer $s(e_i)$ can be defined as above.
\begin{lemma}
$S(K)=|\sum\limits_{i=1}^ns(e_{2i-1})-\sum\limits_{i=1}^ns(e_{2i})|$ is invariant under the generalized Reidemeister moves and twisted Reidemeister moves.
\end{lemma}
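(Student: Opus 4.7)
The plan is to verify $S(K)$ is preserved under each of the ten moves in Figures \ref{figure1} and \ref{figure16} separately. First, I would note that $S(K)$ is independent of which bar is labelled $b_1$: shifting the base by one position inverts the parity of every edge index and hence flips the sign of the alternating sum $\sum(-1)^{i+1}s(e_i)$; this sign is absorbed by the absolute value. This freedom will be used to handle any move that adds or removes bars near the base point.

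Step 1 (generalized Reidemeister moves). None of these moves touches a bar, so the partition $K=e_1\cup\cdots\cup e_{2n}$ is unchanged and it is enough to show each $s(e_i)$ is preserved. For $\Omega_1$ the new self-crossing contributes one overcrossing and one undercrossing of the same sign to the same edge, giving net $0$. For $\Omega_2$ the two new crossings have opposite writhes and are of the same type (both over or both under) on each of the two involved strands, so their contributions to $s$ cancel on each edge. For $\Omega_3$ the writhes and the over/under role of each strand are preserved, as in Proposition 2.3. The purely virtual moves $\Omega_1',\Omega_2',\Omega_3',\Omega_3^v$ involve no real crossings.

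Step 2 ($\Omega_1^t$). This move creates or destroys two adjacent bars with no crossings between them, so the short edge bounded by the pair has $s=0$. Removing the pair merges three consecutive edges $e_j,e_{j+1},e_{j+2}$ into a single edge whose $s$-value is $s(e_j)+0+s(e_{j+2})$. In the old signed sum these three edges contribute $(-1)^{j+1}\bigl(s(e_j)-s(e_{j+1})+s(e_{j+2})\bigr)=(-1)^{j+1}\bigl(s(e_j)+s(e_{j+2})\bigr)$, matching the contribution of the merged edge; subsequent edges shift by two positions, preserving their parities.

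Step 3 ($\Omega_2^t$). This move only involves a bar and a virtual crossing, so no real crossing is created or destroyed and every $s(e_i)$ is unchanged. A direct local check shows the signed sum is preserved (possibly up to the overall sign absorbed by the absolute value), using again that the parity of the bar count is preserved. Step 4 ($\Omega_3^t$) is the main obstacle: a bar slides past a real crossing, which may change the sign of the crossing and simultaneously move the crossing from one edge of its strand to a neighbouring edge across the bar. I would carry out a subcase analysis in the local picture of the move, showing that for each possible orientation and crossing type the change in the $s$-values produced by the sign flip is exactly compensated by the reshuffling of which edges contain the crossing, so that the alternating sum (and hence its absolute value $S(K)$) is preserved. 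The delicate part is confirming that in every subcase the two edges involved have opposite parities in the alternating sum, which is exactly what turns a single sign-change into a cancellation.
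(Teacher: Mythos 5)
Your Steps 1--3 match the paper's argument: the generalized Reidemeister moves are handled by checking that each $s(e_i)$ is separately preserved, the cancellation of a pair of adjacent bars is handled by merging three consecutive edges whose middle edge has $s=0$ together with the parity shift by two (this is the move the paper labels $\Omega_2^t$, while the slide past a virtual crossing is its $\Omega_1^t$ --- only your labelling differs), and your base-point remark corresponds to the paper's observation that $S(K)$ does not depend on the choice of the first edge $e_1$.

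The gap is Step 4. The third twisted move is not ``a bar slides past a real crossing'': a single bar passing through a real crossing is not a twisted Reidemeister move at all. As Figure \ref{figure16}, the coloring picture in Figure \ref{figure17}, and the twisted biquandle axioms $f(y\circ x)\ast f(x\ast y)=f(y)$, $f(x\ast y)\circ f(y\circ x)=f(x)$ indicate, $\Omega_3^t$ involves a real crossing together with a \emph{pair} of bars, and it changes the over/under information of the crossing while relocating both bars; the bookkeeping therefore has to be carried out on both strands of the crossing simultaneously, not only on ``its strand'' as in your sketch. Note moreover that if a crossing flips both its writhe and its over/under role on a given strand, its contribution to $s=u_++o_--u_--o_+$ is unchanged, so the mechanism you invoke (a sign change in $s$ compensated by moving across a bar) is not automatic and must be read off from the actual local picture of the move. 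Finally, this is exactly the one case that requires a computation, and you defer it (``I would carry out a subcase analysis''). The paper performs it explicitly: comparing the two sides of $\Omega_3^t$, on the strand where the crossing lies between the two bars one has $s(e_{i+1})=1$ and $s(e_i)+s(e_{i+2})=s(e_i')+1$, hence $s(e_i)-s(e_{i+1})+s(e_{i+2})=s(e_i')$, and the remaining configurations are checked in the same manner. Without an argument of this kind --- for the correct move, on both strands, in all orientation cases --- the key case of your proof is missing.
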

\begin{proof}
First note that $S(K)$ does not depend on the choice of the first edge $e_1$, hence it is well-defined. Let us consider the generalized Reidemeister moves and twisted Reidemeister moves individually.
\begin{itemize}
  \item $\Omega_1$: assume $\Omega_1$ is taken on $e_i$, then $e_i$ adds a new overcrossing and a new undercrossing which have the same writhe. Therefore $s(e_i)$ is preserved.
  \item $\Omega_2$: in this case, there are two edges, say $e_i, e_j$, where $e_i$ adds a pair of new overcrossings with opposite signs and $e_j$ adds a pair of new undercrossings with opposite signs. Hence both $s(e_i)$ and $s(e_j)$ are invariant.
  \item $\Omega_3$: it is easy to observe that for each edge involved in $\Omega_3$ only the positions of two crossing points are switched.
  \item $\Omega_1', \Omega_2', \Omega_3', \Omega_3^v, \Omega_1^t$: nothing need to prove in these cases.
  \item $\Omega_2^t$: we use $e_i^l$ $(1\leq i\leq 2n+2)$ and $e_j^r$ $(1\leq j\leq 2n)$ to denote the edges on the left side and right side of $\Omega_2^t$ respectively (see Figure \ref{figure16}). Without loss of generality, assume the edge on the right side of $\Omega_2^t$ is $e_{2n}^r$. Then the three edges on the left side of $\Omega_2^t$ are $e_{2n}^l, e_{2n+1}^l, e_{2n+2}^l$. Note that $s(e_{2n}^l)+s(e_{2n+2}^l)=s(e_{2n}^r)$ and $s(e_{2n+1}^l)=0$. The result follows.
  \item $\Omega_3^t$: let us consider the two local diagrams on the left side of Figure \ref{figure17}, say $K$ and $K'$. We use $e_i$ and $e_i'$ to denote the edges of $K$ and $K'$ respectively. Let $e_i'$ be the edge that contains the curve that begins with $y\circ x$ and ends with $y$. It corresponds to three edges in $K$, say $e_i, e_{i+1}, e_{i+2}$ (since $S$ is well-defined, the first edge $e_1$ can be chosen away from the local diagram depicted in Figure \ref{figure17}). It is easy to observe that $s(e_i)+s(e_{i+2})=s(e_i')+1$ and $s(e_{i+1})=1$, hence we have $s(e_i)-s(e_{i+1})+s(e_{i+2})=s(e_i')$. The other cases in Figure \ref{figure17} can be checked in the same way.
\end{itemize}
\end{proof}

According to the definition of $S(K)$, it is evident that $S(K)=0$ if $K$ is a virtual knot. On the other hand since $\sum\limits_{i=1}^ns(e_{2i-1})+\sum\limits_{i=1}^ns(e_{2i})=0$, we conclude that $S(K)$ is always an even integer. The relation between the existence of colorings of $K$ and $S(K)$ is given in the following lemma.
\begin{lemma}
Consider the twisted biquandle $(\mathds{Z}, a\ast b=a\circ b=a+1, f(a)=-a)$, there exists a coloring of $K$ if and only if $S(K)=0$.
\end{lemma}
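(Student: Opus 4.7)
The plan is to run the propagation argument from the proof of Lemma 7.3 in the even-bar setting and identify the single return-to-start obstruction with $S(K)$. Order the bars $b_1,\dots,b_{2n}$ cyclically along the orientation of $K$ and let $e_i$ be the edge from $b_i$ to $b_{i+1}$ (indices mod $2n$). Let $c_i\in\mathds{Z}$ denote the color of the segment of $e_i$ immediately after $b_i$. Exactly as in Lemma 7.3, the coloring rule at each real crossing of the biquandle $(\mathds{Z},a\ast b=a\circ b=a+1)$ forces the color along $e_i$ to change by $s(e_i)$, so the segment reaching $b_{i+1}$ carries color $c_i+s(e_i)$, and applying $f(a)=-a$ at the bar yields the recursion $c_{i+1}=-\bigl(c_i+s(e_i)\bigr)$.

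For the ``only if'' direction, suppose a coloring exists and put $k=c_1$. Iterating the recursion $2n$ times, a simple induction gives (the key point being that an even number of sign flips restores the coefficient of $k$ to $+1$)
\begin{center}
$c_{2n+1}=k+\sum_{i=1}^{n}s(e_{2i-1})-\sum_{i=1}^{n}s(e_{2i}).$
\end{center}
Since $b_{2n+1}=b_1$, consistency demands $c_{2n+1}=k$, which is equivalent to $\sum_{i=1}^{n}s(e_{2i-1})=\sum_{i=1}^{n}s(e_{2i})$, i.e.\ $S(K)=0$.

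Conversely, assume $S(K)=0$. Pick any $k\in\mathds{Z}$, set $c_1=k$, and define the colors of all remaining segments of $K$ by the local propagation rules (the appropriate $\pm1$ increment at each real crossing and $f$ at each bar). The only possible failure is that after one full loop around $K$ the assignment returns a value distinct from $k$; but by the computation above the discrepancy equals $\pm\bigl(\sum_{i=1}^{n}s(e_{2i-1})-\sum_{i=1}^{n}s(e_{2i})\bigr)=\pm S(K)$, which vanishes by hypothesis. Hence a coloring (in fact a $\mathds{Z}$-parameter family, one for each choice of $k$) exists.

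The main subtlety one must address is that each real crossing is traversed twice as we follow $K$ (once as under, once as over), so a priori each crossing could impose its own consistency relation between the two passages. However, for this particular biquandle the outgoing color on each strand depends only on that strand's incoming color and the sign of the crossing, not on the color on the other strand; the two traversals of a single crossing therefore impose no mutual compatibility condition, and the sole global obstruction to the existence of a coloring is the single loop equation derived above.
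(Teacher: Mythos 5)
Your proof is correct and follows essentially the same route as the paper: propagate a color $k$ around the diagram using the rules (gaining $s(e_i)$ along each edge and negating at each bar) and observe that the sole obstruction is the return-to-start equation, which is exactly $S(K)=0$. Your explicit recursion $c_{i+1}=-\bigl(c_i+s(e_i)\bigr)$ and the remark that this particular biquandle imposes no cross-strand constraint at the twice-visited crossings simply make precise what the paper leaves implicit by referring back to Lemma 7.3.
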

\begin{proof}
The proof is similar to the proof of Lemma 7.3. If an segment is assigned with an integer $k$, then the labels on other segments are determined according to the coloring rules. It is easy to find that there exists a coloring if and only if the following equations
\begin{center}
$\sum\limits_{i=1}^{n}s(e_{2i-1})-\sum\limits_{i=1}^{n}s(e_{2i})+k=k$ and $\sum\limits_{i=1}^{n}s(e_{2i})-\sum\limits_{i=1}^{n}s(e_{2i-1})+k=k$
\end{center}
hold. Consequently, there exists a coloring of $K$ if and only if $S(K)=0$.
\end{proof}

According to the proof above, we know that if $S(K)=0$ then there are infinitely many different colorings, which can be obtained by coloring a fixed segment with all integers. In order to color twisted knots with nonzero $S(K)$, we replace the twisted biquandle $(\mathds{Z}, a\ast b=a\circ b=a+1, f(a)=-a)$ with $(\mathds{Z}_{S(K)}, a\ast b=a\circ b=a+1, f(a)=-a)$. In particular, if $S(K)=0$ then we have $\mathds{Z}_{S(K)}=\mathds{Z}$. For a twisted knot $K$, if we use the twisted biquandle $(\mathds{Z}_{S(K)}, a\ast b=a\circ b=a+1, f(a)=-a)$ then there are exactly $S(K)$ different colorings. Fix a coloring $f$, for the crossing point $c$ depicted in Figure \ref{figure15} we define the index of it associated to $f$ as Ind$_f(c)=y-x ($mod $S(K)) \in \mathds{Z}_{S(K)}$. Obviously this definition depends on the choice of $f$, but what we need is an index which does not depend on the choice of colorings. Hence it is necessary to study the indices of all colorings. Fortunately, there are only $S(K)$ different colorings totally.

Let $K$ be a twisted knot with an even number of bars, the Gauss diagram of $K$ can be similarly defined as the virtual knots. We still use $G(K)$ to denote it. For any chord $c$ in $G(K)$, it splits the circle into two semi-circles. Since there are totally an even number of bars in $K$, then either both semi-circles have an even number of bars, or both semi-circles have an odd number of bars. We use $C_e(K)$ to denote all the chords of the first case and $C_o(K)$ to denote all the chords of the second case.
\begin{lemma}
Choose $c_1\in C_e(K)$ and $c_2\in C_o(K)$, then Ind$_f(c_1)$ does not depend on the choice of $f$, and Ind$_f(c_2)$ take values on all odd or all even numbers of $\mathds{Z}_{S(K)}$.
\end{lemma}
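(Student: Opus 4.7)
The plan is to make the dependence of a coloring on its base value completely explicit, then read off how $\mathrm{Ind}_f(c)=y-x$ transforms as $f$ varies. Fix a base segment $e_\ast$; by Lemma 7.5 the $S(K)$ distinct colorings are parameterized by the $S(K)$ possible values in $\mathds{Z}_{S(K)}$ assigned to $e_\ast$. Because the only biquandle operations in play are $a\mapsto a+1$ (at each real crossing) and $a\mapsto -a$ (at each bar), a straightforward induction along the oriented circle shows that the color which a coloring with base value $k$ induces on a segment $s$ is an affine function $\epsilon_s k+c_s$ with $\epsilon_s\in\{+1,-1\}$ and $c_s\in\mathds{Z}_{S(K)}$. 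Moreover $\epsilon_s=(-1)^{N(e_\ast,s)}$, where $N(e_\ast,s)$ is the number of bars encountered along the path from $e_\ast$ to $s$; the parity (and hence $\epsilon_s$) is unambiguous because the total number of bars $2n$ is even, so the two arcs have bar-counts of the same parity.

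Now let $f$ and $f'$ be two colorings whose base values differ by $\delta\in\mathds{Z}_{S(K)}$. Then the color of every segment $s$ changes by $\epsilon_s\delta$, so at a crossing $c$ with understrand input $x$ and overstrand input $y$ one has $\mathrm{Ind}_{f'}(c)-\mathrm{Ind}_f(c)=(\epsilon_y-\epsilon_x)\delta$. The key geometric observation is that the parity of the number of bars between the segments $x$ and $y$ coincides with the parity of the bar-count on either of the two arcs of the Gauss circle cut out by the chord $c$. Consequently $\epsilon_y=\epsilon_x$ when $c\in C_e(K)$, and $\epsilon_y=-\epsilon_x$ when $c\in C_o(K)$.

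For $c_1\in C_e(K)$ the difference $(\epsilon_y-\epsilon_x)\delta$ vanishes, so $\mathrm{Ind}_f(c_1)$ is independent of $f$. For $c_2\in C_o(K)$ the difference equals $\pm 2\delta$, so as $\delta$ ranges over $\mathds{Z}_{S(K)}$ the set of attained values of $\mathrm{Ind}_f(c_2)$ is a coset of $2\mathds{Z}_{S(K)}$. Since $S(K)$ is even (as noted just before Lemma 7.5), the subgroup $2\mathds{Z}_{S(K)}$ is precisely the set of even residues mod $S(K)$; hence this coset is either all even residues or all odd residues, which is exactly the second half of the claim.

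The step I expect to require the most care is the affine-form lemma for colorings: showing cleanly that the negations at bars do not destroy the linear dependence on $k$ but merely flip the sign $\epsilon_s$, absorbing all accumulated integer shifts into $c_s$. Everything after that is formal: the identification $\epsilon_y\epsilon_x^{-1}=(-1)^{\text{(bars on one arc of }c\text{)}}$ is essentially by definition of $C_e(K)$ and $C_o(K)$, and the statement about cosets of $2\mathds{Z}_{S(K)}$ is immediate from $S(K)$ being even.
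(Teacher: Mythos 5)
Your proposal is correct and follows essentially the same route as the paper's proof: parameterize the $S(K)$ colorings by the value on a base segment, observe that shifting the base value by $\delta$ changes the color on each segment by $\pm\delta$ according to the parity of the number of bars traversed, and conclude that the two input colors at a chord shift by equal signs for $c_1\in C_e(K)$ (index unchanged) and by opposite signs for $c_2\in C_o(K)$ (index shifts by $\pm2\delta$, sweeping out all even or all odd residues since $S(K)$ is even). The paper states the sign-agreement/disagreement implicitly via the $\pm$/$\mp$ notation, whereas you make the bar-parity justification explicit; the content is the same.
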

\begin{proof}
Recall that all colorings of $K$ can be obtained by coloring a fixed segment $a$ with all integers in $\mathds{Z}_{S(K)}$. Assume that when we assign $0$ to $a$, the index of $c_1$ equals $y_1-x_1$ and the index of $c_2$ equals $y_2-x_2$ (see Figure \ref{figure15}). Then if $a$ is colored by some integer $k$, the index of $c_1$ turns into $(y_1\pm k)-(x_1\pm k)=y_1-x_1$. However, the index of $c_2$ becomes $(y_2\pm k)-(x_2\mp k)=y_2-x_2\pm2k$. The proof is finished.
\end{proof}

Due to the lemma above the set $C_o(K)$ can be divided into two pieces, say $C_o^0(K)$ and $C_o^1(K)$, where $C_o^0(K)$ contains all the crossing points with even indices and $C_o^1(K)$ contains all the crossing points with odd indices. Since Ind$_f(c)$ does not depend on the choice of $f$ if $c\in C_e(K)$, we can simply use Ind$(c)$ to denote it. The results of the discussion above can be summarized in the form of a polynomial
\begin{center}
$T_e(K)=\sum\limits_{c\in C_o^0(K)}w(c)s_0+\sum\limits_{c\in C_o^1(K)}w(c)s_1+\sum\limits_{c\in C_e(K)}w(c)t^{\text{Ind}(c)}-w(K)$.
\end{center}
\begin{theorem}
Let $K$ be a twisted knot with an even number of bars, then $T_e(K)$ is a twisted knot invariant.
\end{theorem}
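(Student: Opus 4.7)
The plan is to verify that $T_e(K)$ is unchanged under every generalized Reidemeister move in Figure \ref{figure1} and every twisted Reidemeister move in Figure \ref{figure16}. The structural prerequisites are already in hand: Lemma 7.5 makes $\mathds{Z}_{S(K)}$ a well-defined target for colorings, and Lemma 7.7 guarantees that for $c \in C_e$ the integer $\text{Ind}(c)$ is coloring-independent, while for $c \in C_o$ only the parity of $\text{Ind}_f(c)$ is coloring-independent, which is what justifies the symbols $s_0, s_1$ and the partition $C_o = C_o^0 \cup C_o^1$. So the task reduces to checking, for each move, that (i) the partition $\{C_e, C_o^0, C_o^1\}$ of chords is preserved on chords not destroyed by the move, (ii) the index of a surviving $C_e$-chord and the parity of a surviving $C_o$-chord are preserved, and (iii) newly created or destroyed chords and the writhe correction $-w(K)$ together contribute zero.

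I would proceed move by move. The virtual moves $\Omega_1', \Omega_2', \Omega_3', \Omega_3^v$ and the bar cancellation $\Omega_1^t$ introduce no new real crossings; since $f^2 = \text{id}$, a pair of adjacent bars cancels without affecting any coloring, and no semicircle bar-count parities change, so $T_e$ is unchanged. For $\Omega_1$ the added chord $c$ is isolated, and the semicircle between its endpoints contains $0$ bars, so $c \in C_e$; a direct computation using $a \ast b = a+1$ shows $\text{Ind}(c) = 0$, and the contribution $w(c)t^0$ is exactly offset by the change in $-w(K)$. For $\Omega_2$ the two created chords lie in the same class (the bar count on the enclosed arc is the same for both) and have equal indices (respectively equal index parities), but opposite writhes, so they cancel; $w(K)$ does not change. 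For $\Omega_3$, as in the analogue of Proposition 2.3(4), the three chords retain their pairwise intersections with all remaining chords, and a short bar-parity bookkeeping on the three swapped arcs shows the classes and indices are preserved; the invariance of $T_e$ then follows from the unchanged contributions.

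The twisted moves $\Omega_2^t$ and $\Omega_3^t$ are the heart of the proof. For $\Omega_2^t$, a bar slides across a real crossing $c$; on each side the bar counts on the two semicircles determined by any other chord $c'$ change by an equal amount, so $c'$ keeps its class, and the four local colors around $c$ are related via $f$ in a way that by case analysis on the sign and strand orientation preserves $\text{Ind}(c)$ in $C_e$ and the parity of $\text{Ind}_f(c)$ in $C_o$. For $\Omega_3^t$ I would use Figure \ref{figure17}: the biquandle coloring identity there, together with $a \ast b = a \circ b = a+1$ and $f(a) = -a$, lets me match the three crossings across the move, and a local bar-redistribution check shows the semicircle parities are preserved for every other chord and for the three moved chords themselves.

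The main obstacle is $\Omega_3^t$. Unlike $\Omega_3$, where preservation of indices was essentially settled by Proposition 2.3, here the bars move across strands and so the global semicircle parities of every chord of the diagram could in principle change; I must verify they do not, and simultaneously verify the local index-parity transformations on the three moving crossings. A secondary difficulty is showing in $\Omega_2^t$ that the single crossing straddled by the sliding bar retains its $C_e$ or $C_o^i$ class, since this depends on the cumulative parity of bars on each semicircle rather than just the local picture. Once these bookkeeping lemmas are in place, the three summands of $T_e(K)$ are individually invariant, completing the proof.
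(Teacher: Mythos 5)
Your overall architecture is the right one, and in fact it is the one the paper itself intends: the paper does not write the proof out at all, stating only that it is routine to check that the three summands $\sum_{c\in C_o^0(K)}w(c)$, $\sum_{c\in C_o^1(K)}w(c)$ and $\sum_{c\in C_e(K)}w(c)t^{\text{Ind}(c)}-w(K)$ are separately invariant under the generalized and twisted Reidemeister moves, exactly the move-by-move verification you propose, resting on Lemma 7.5 (well-definedness of $\mathds{Z}_{S(K)}$) and Lemma 7.7 (coloring-independence of $\text{Ind}$ on $C_e$ and of the parity on $C_o$). Your treatment of the classical moves $\Omega_1,\Omega_2,\Omega_3$, of the virtual moves, and of the cancellation of two adjacent bars (via $f^2=\mathrm{id}$ and the fact that adjacent bars lie on the same side of, or jointly inside, every semicircle) is exactly the bookkeeping the paper has in mind, as is the use of Figure \ref{figure17} for $\Omega_3^t$.

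The genuine problem is your reading of the twisted move set, which you place at ``the heart of the proof.'' There is no twisted Reidemeister move in which a single bar slides across a real crossing. In this paper $\Omega_2^t$ is the creation/cancellation of two adjacent bars on one arc: this is what the proof of Proposition 7.2 means by ``the invariance follows from the fact that $f$ is an involution,'' and what the proof of Lemma 7.5 encodes in the relations $s(e^l_{2n})+s(e^l_{2n+2})=s(e^r_{2n})$, $s(e^l_{2n+1})=0$ (the middle edge between the two new bars carries no crossings). So the move you have already handled under the name $\Omega_1^t$ is the actual $\Omega_2^t$, while the paper's $\Omega_1^t$ is the harmless bar-past-virtual-crossing move. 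More importantly, the claim you make for your purported move is false: if a single bar could pass an endpoint of a chord $c$ along the knot, the bar would switch from one semicircle determined by $c$ to the other, flipping the parity of the bar counts and moving $c$ between $C_e$ and $C_o$; the coloring propagation across $c$ would change as well, so neither the class nor $\text{Ind}(c)$ is preserved, and $T_e$ would not survive such a move. That this slide is not an allowed move is precisely why the decomposition $C_e\cup C_o^0\cup C_o^1$ is meaningful and why $T_e$ can be nontrivial. Dropping that spurious case and instead carrying out the (genuinely nontrivial) check for $\Omega_3^t$ in detail --- matching the colors through $f(a)=-a$, $a\ast b=a\circ b=a+1$ as in Figure \ref{figure17}, verifying that the moved crossing keeps its class and its index (respectively index parity), and that the writhe term is unaffected --- is what would complete the proof.
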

It is routine to check that $\sum\limits_{c\in C_o^0(K)}w(c), \sum\limits_{c\in C_o^1(K)}w(c)$ and $\sum\limits_{c\in C_e(K)}w(c)t^{\text{Ind}(c)}-w(K)$ are invariant under the generalized Reidemeister moves and the twisted Reidemeister moves. Therefore we omit the proof here.

The first index type invariant of twisted knots was introduced by Naoko Kamada in \cite{Kam2013}. In \cite{Kam2013}, Naoko Kamada defined two polynomials of twisted knots, denoted by $\overline{Q}_K$ and $\widetilde{Q}_K$, where $\widetilde{Q}_K$ is a refinement of $\overline{Q}_K$. We remak that some of our results, for instance $\sum\limits_{c\in C_o^0(K)}w(c)s_0$ and $\sum\limits_{c\in C_o^1(K)}w(c)s_1$ also can be found in the definition of $\overline{Q}_K$. We end this paper with an example which explains the difference between our polynomial invariants and that introduced by Naoko Kamada.
\begin{example}
Consider the twisted knot $K$ described in Figure \ref{figure18}. It has three bars hence there is a unique coloring. Direct calculation shows that $T_o(K)=2t^2+t^{-4}-3$. However we remark that the chord index used in $\overline{Q}_K$ and $\widetilde{Q}_K$ \cite{Kam2013} was defined in a similar manner as the linking number definition we mentioned in Section 2. For the twisted knot in Figure \ref{figure18}, notice that smoothing any crossing point will give us a 2-component split link. It follows that $\overline{Q}_K$ and $\widetilde{Q}_K$ are both trivial in this example.
\end{example}
\begin{figure}[h]
\centering
\includegraphics{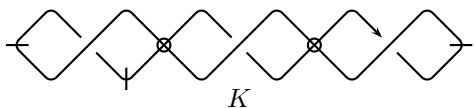}\\
\caption{A twisted knot with an odd number of bars}\label{figure18}
\end{figure}

\section*{Acknowledgement}
This paper is completed during the author's visit to the George Washington University. The author appreciates their kind hospitality during his visit. The author is supported by NSFC 11301028, NSFC 11571038 and China Scholarship Council.

\bibliographystyle{amsplain}

\end{document}